\documentclass{amsart}
\usepackage{amssymb,amsmath,amsthm,epsf,epsfig,dsfont,bbm}
\usepackage{latexsym}
\usepackage[utf8]{inputenc}
\usepackage{hyperref}
\hypersetup{
	colorlinks   = true,
	citecolor    = teal,
	linkcolor    = purple 
}
\usepackage{upref, eucal}
\usepackage[all]{xy}

%
%

\parskip 0.1cm

\newcommand {\nc} {\newcommand}
\newcommand {\enm} {\ensuremath}

\def \d{\delta}
\nc {\nd}{\partial}

\nc {\bdm} {\begin{displaymath}}
\nc {\edm} {\end{displaymath}}

\newtheorem {theorem} {\bf{Theorem}}[section]
\newtheorem {lemma}[theorem] {\bf Lemma}
\newtheorem {proposition}[theorem] {\bf Proposition}

\newtheorem {corollary}[theorem] {\bf Corollary}
\numberwithin {equation}{section}

\newcommand\FF{\mathbb{F}}\newcommand\QQ{\mathbb{Q}}
\newcommand\ZZ{\mathbb{Z}}

\newcommand{\Ou}{\enm{\mathcal{O}}}

\newcommand{\A}{\enm{\mathbb{A}}}

\nc{\J}{\enm{\mathcal{J} }}
\nc {\Z} {\enm{\mathbb{Z}}}
\nc {\form}[1] {\enm{\mbox{\underline{for}}}_{#1}}
\nc {\prol}[1] {\enm{\mbox{\underline{prol}}_{{#1}^*}}}

\nc {\stk} {\stackrel}

\newcommand{\map}{\rightarrow}

\newcommand{\inj}{\hookrightarrow}

\newcommand{\dualmod}[1]{{#1}^{\vee}}

\newcommand{\Pn}[2] {\ensuremath{ {\mathbb{P}}^{#1}_{#2}}}
\nc{\Quot}[3]{\enm{ {\mathfrak{Quot}_{ {#1}/{#2}/{#3}}}}}
\nc{\Hilb}[2]{\enm{ {\mathfrak{Hilb}_{ {#1}/{#2}}}}}
\newcommand{\mfrak}[1]{\mathfrak{#1}}

\newcommand{\bb}[1]{\mathbb{#1}}
\newcommand{\mcal}[1]{\mathcal{#1}}

\newcommand{\Q}{\mathbb{Q}}
\nc {\Coh}[4] {\ensuremath{H^{#1}(\Pn{#2}{},{#3}({#4}))}}
\nc {\Ch}[3] {\enm{H^{#1}(X_t,{#2}_t({#3}))}}
\nc {\Qphi}[4]{\enm{ {\mathfrak{Quot}^{~#4}_{ {#1}/{#2}/{#3}}}}}
\nc {\Gra}[4]{\enm{ {\mathfrak{Grass}_{#2}({#3},{#4})}}}
\nc {\HomA}[2]{\enm{\mathrm{Hom}_A{#1}{#2}}}
\nc {\tr}{\mathrm{tr}}

\nc {\C}[2]{\enm{\left(\begin{array}{l} {#1} \\ {#2} \end{array} \right)}}
\nc {\mat}[4]{\enm{\left(\begin{array}{ll}{#1} & {#2} \\ {#3} & {#4}
\end{array}\right)}}

\def \vp{\varphi}
\def \mb{\mbox}


 \def \Z{{\mathbb Z}}

   \def \h{\hat{\ }}

\def \d{\delta}   \def \bF{{\bf F}}

  \def \bX{{\bf X}} \def \bH{{\bf H}}
   \def \bF{{\mathbb F}}

\def \hG{\hat{\mathbb{G}}_{\mathrm{a}}}

\def \R1{R((q))[q']\h}

\usepackage{xcolor}

\newcommand{\lam}{\lambda}
\DeclareMathOperator{\Spec}{\mathrm{Spec}}
\DeclareMathOperator{\Spf}{\mathrm{Spf}}
\DeclareMathOperator{\Lie}{\mathrm{Lie}}
\DeclareMathOperator{\rk}{\mathrm{rk}}
\newcommand{\Hom}{\mathrm{Hom}}
\newcommand{\End}{\mathrm{End}}
\newcommand{\Ext}{\mathrm{Ext}}

\newcommand{\bI}{{\bf I}}


\newcommand{\beqar}{\begin{eqnarray*}}
\newcommand{\eeqar}{\end{eqnarray*}}

\newcommand{\mff}{\mfrak{f}}

\nc{\bx}{\mathbf{x}}
\nc{\by}{\mathbf{y}}
\nc{\bz}{\mathbf{z}}
\nc{\ba}{\mathbf{a}}
\nc{\Fp}{\tilde{F}}
\nc{\Rp}{\tilde{R}}
\nc{\mlow}{m_{\mathrm{l}}}
\nc{\mup}{m_{\mathrm{u}}}
\nc{\ord}{\mb{ord }}
\nc{\bXp}{\bX_{\mathrm{prim}}}
\nc{\bPsi}{\mathbf{\Psi}}
\nc{\mult}{\mathrm{mult}}
\nc{\mbB}{\mathbbm{B}}
\nc{\mfor}[1]{{#1}^{\mathrm{for}}}
\nc{\Hdr}{\bH^1_{\mathrm{dR}}}
\nc{\Hcr}{\bH^1_{\mathrm{cris}}}
\nc{\Fc}{F_{\mathrm{cris}}}
\nc {\Hd}{\bH_{\d}}
\nc{\mn}{[m]n}

\nc{\tF}{\tilde{F}}
\nc{\fra}{\mfrak{f}}
\nc{\bt}{{\bf t}}
\nc{\Nn}{N^{\mn}}
\nc{\del}{\Delta}
\nc{\tilW}{\tilde{W}}
\nc{\bo}{{\bf b}}
\newcommand{\nHom}{\widetilde{\mathrm{Hom}}}
\nc{\Di}[2]{\Delta^{#1}i^*\d^{#2}}
\nc{\di}[1]{i^*\d^{#1}}

\newcommand{\cblue}{\color{black}}
\newcommand{\cblu}[1]{\cblue {#1} \color{black}}
\newcommand{\cbl}[1]{\color{black} {#1} \color{black}}
\newcommand{\mg}{g}
\newcommand{\mr}[1]{\mathrm{#1}}
\newcommand{\Iso}{FIso}
\newcommand{\vpi}{\mathrm{val}_\pi}

\title{Delta Characters and Crystalline Cohomology}
\author{Sudip Pandit and Arnab Saha}
\date{}
\email{sudip.pandit@iitgn.ac.in, arnab.saha@iitgn.ac.in}
\address{Indian Institute of Technology Gandhinagar, Gujarat 382355, India}
\subjclass[2010]{Primary 11G07, 14F30, 14F40, 14G20, 14K15, 14L05, 14L15, 14B20.}

\keywords{Witt vectors, lift of frobenius, arithmetic jet spaces, delta characters, abelian schemes, deRham cohomology, crystalline cohomology, filtered
isocrystals}


\begin{document}
\begin{abstract}

The first part of the paper develops the theory of $m$-shifted $\pi$-typical 
Witt vectors which can be viewed as subobjects of the usual $\pi$-typical 
Witt vectors. We show that the shifted Witt vectors admit a delta structure 
that satisfy a canonical identity with the delta structure of the usual
$\pi$-typical Witt vectors. Using this theory, we prove that the generalized
kernels of arithmetic jet spaces are jet spaces of the kernel at the 
first level. This also allows us to interpret the arithmetic Picard-Fuchs 
operator geometrically.

For a $\pi$-formal group scheme $G$, by a previous construction,
one attaches a canonical filtered isocrystal $\mathbf{H}_\delta(G)$ associated
to the arithmetic jet spaces of $G$. In the second half of our paper, we 
show that $\mathbf{H}_\delta(A)$ is of finite rank if $A$ is an abelian scheme.
We also prove a strengthened version of a result of Buium on delta characters
on abelian schemes.
As an application, for an elliptic curve $A$ defined over $\mathbb{Z}_p$, we
show that our canonical filtered isocrystal $\mathbf{H}_\delta(A) \otimes
\mathbb{Q}_p$ is weakly admissible. In particular, if $A$ does not admit a 
lift of Frobenius,
we show  that $\mathbf{H}_\delta(A) \otimes \mathbb{Q}_p$ is 
isomorphic to the first crystalline cohomology $\mathbf{H}^1_{\mathrm{cris}}(A)
\otimes \mathbb{Q}_p$ in the category of filtered isocrystals. On the 
other hand, if $A$ admits a lift of Frobenius, then $\mathbf{H}_\delta(A)
\otimes \mathbb{Q}_p$ is isomorphic to the sub-isocrystal $H^0(A,\Omega_A)
\otimes \mathbb{Q}_p$ of $\mathbf{H}^1_{\mathrm{cris}}(A) \otimes 
\mathbb{Q}_p$. 

The above result can be viewed as a character theoretic interpretation of
the crystalline cohomology. 
The difference between the integral structures of 
$\mathbf{H}_\delta(A)$
and $\mathbf{H}_{\mathrm{cris}}^1(A)$ is measured by a delta modular form $f^1$
constructed by Buium.




\end{abstract}
\maketitle
\section{Introduction}

Let us fix a Dedekind domain $\Ou$ with finite residue fields 
and a nonzero prime ideal $\mfrak{p}$ in 
it. Let $k$ be the residue field at $\mfrak{p}$ with cardinality $q$ which is 
a power of
a prime $p$ and $\pi$ be a uniformizer of $\mfrak{p}\Ou_{\mfrak{p}}$. 
Let $R$ be an $\Ou$-algebra with a $\pi$-derivation $\d$ on it. 
Consider $X$ to be any scheme defined over $\Spec R$. In analogy with 
differential algebra, for all $n$, one defines the $n$-th arithmetic
jet functor as 
$$
J^nX(B):= X(W_n(B))
$$
where $W_n(B)$ is the $\pi$-typical Witt vectors of length $n+1$ 
for any $R$-algebra $B$ \cite{bor11a, drin76, Lars, joyal}. 
By \cite{bps,bor11b}, the functor $J^nX$ is representable
by an $R$-scheme, which we will continue to denote as $J^nX$. In the category of
$\pi$-formal schemes, $J^nX$ is precisely the arithmetic jet space
constructed by Buium \cite{bui95}. 

Some of the applications of our main
theorem will be in the category of $\pi$-formal schemes.
If $G$ is a $\pi$-formal smooth group scheme defined over $\Spf R$,
then the natural projection map
$u:J^nG \map G$ is a surjection of group schemes \cite{bor11b, bui00}. 
Let us denote the kernel
of $u$ as $N^nG$. Then they satisfy the following canonical short exact 
sequence of $\pi$-formal group schemes
\begin{align}\label{short1}
0 \map N^nG \map J^nG \stk{u}{\map} G \map 0.
\end{align} 
In the case when $A$ is an elliptic curve and $R$ is a $\pi$-adically 
complete discrete valuation ring of characteristic 0 whose
ramification is bounded by $p-2$, Buium shows that $N^1A \simeq \hG$
and therefore one obtains a canonical extension of the elliptic curve $A$ by 
$\hG$
\begin{align}
\label{short2}
0 \map \hG \map J^1A \map A \map 0.
\end{align}
Hence $J^1A$ gives rise to a canonical extension class $\eta_{\tiny{J^1A}} \in 
\Ext(A,\hG) \simeq H^1(A,\Ou_A)$. This class is trivial if and only if $A$ has
a canonical lift of Frobenius (which we will denote as CL). This leads to a 
remarkable new $\d$-modular function $f^1$ defined by Buium in \cite{bui00} as
$$
f^1(A,\omega) = \langle \omega, \eta_{\tiny{J^1A}} \rangle,
$$
where $\omega$ is an invertible $1$-form on the elliptic curve $A$
and $\langle ~, ~ \rangle$ is the pairing arising from Serre duality.
Note that as a (delta) modular function, $f^1$ has the property that it vanishes
whenever $A$ has a canonical lift of Frobenius. This is 
equivalent to having the Serre-Tate parameter $q(A)=1$. 
Using the above property of $f^1$, 
Buium and Poonen in \cite{bui09} show that the intersection of the Heegner 
points with any finite rank subgroup of a modular elliptic curve is finite.

The theory of $\d$-geometry and $\d$-modular forms is developed in a 
series of articles
such as 
\cite{barc, BL, bui00,bui-book,busa1,busa2,hurl}. In \cite{bui96}, Buium proved 
an effective Manin-Mumford conjecture using $\d$-geometry. In \cite{BS_b}, 
Borger and Saha construct canonical filtered  isocrystals associated to 
delta characters of a group scheme. We will use this construction in this 
article and prove comparison results with the crystalline cohomology for 
elliptic curves defined over $\Z_p$.
The equal characteristic analogue of the above construction
was done in \cite{BS_a, PS-1}. 

In the first part of this paper, we construct  $m$-shifted $\pi$-typical
Witt vectors $W_{\mn}(B)$ of length $m+n+1$ for any $\Ou$-algebra $B$. 
In \cite{BS_b}, Borger and Saha introduced $0$-shifted Witt vectors.
The shifted Witt vectors should be thought of as certain subrings of the usual
$\pi$-typical Witt vectors. Interstingly we show that such shifted 
Witt vectors admit a different $\d$-structure than the usual ones. The lift
of Frobenius associated to this $\d$-structure, called the {\it Lateral 
Frobenius} $\tilde{F}:W_{\mn}(B) \map
W_{[m]n-1}(B)$  satisfies the following canonical identity 
$$
F^{m+2} \circ I = F^{m+1} \circ I \circ \tilde{F}
$$ 
where $F:W_n(B) \map W_{n-1}(B)$ is the usual Frobenius map of Witt vectors 
and $I: W_{\mn}(B) \map W_{m+n}(B)$ is a natural map between rings.

Let $(X,P_0)$ denote a scheme $X$ over $\Spec R$ with a marked $R$-point
$P_0: \Spec R \map X$. Then composing with the map induced by $\exp_\d$
(analogue of Hasse-Schmidt differentiation map as in \cite{bps}, Proposition 
$2.10$), 
$P_0$ induces an $R$-point $P_m: \Spec R \map J^mX$. Consider the fiber 
product 
$$
N^{\mn}X := J^{m+n}X \times_{J^mX,P_m} \Spec R
$$
Here we would like to remark that the natural lift of 
Frobenius morphism $\phi_X: J^{m+n}X \map J^{m+n-1}X$ in general does not 
restrict to a morphism from $N^{\mn}X$ to $N^{[m]n-1}X$.
However using the lateral Frobenius on $m$-shifted Witt vectors, we show that 
the system of schemes $\{N^{\mn}X\}_{n=1}^\infty$ naturally become a 
prolongation sequence (for definition see Section \ref{pre}) 
of $S$-schemes when $X$ is affine. 

\cblue
Let us first explain our results in the setting of $R$-algebras.
Hence for an affine scheme $X = \Spec B$, the lateral Frobenius $\fra$ in 
Theorem \ref{lateral} induces a lift of Frobenius $\fra: N_{[m]n-1}B 
\map N_{[m]n}B$ for all $n \geq 1$. We will show in (\ref{Delta-desc}), 
that for all $n \geq 2$, $\fra$ naturally induces a unique $\pi$-derivation 
$\Delta : N_{[m]n-1}B \map N_{[m]n}B$ which satisfies
$$
\fra (a) = a^q + \pi \Delta (a),
$$
for all $a \in N_{[m]n-1}B$. This naturally makes the system of 
$R$-algebras 
$N_{[m]*}B = \{N_{[m]n}B\}_{n=1}^\infty$ into a prolongation sequence of 
$R$-algebras.

Let $R_* = R \stk{\d}{\rightarrow} R \stk{\d}{\rightarrow} \dots $ be the 
prolongation sequence
with the $\pi$-derivations at all levels to be the fixed $\pi$-derivation $\d$ 
on $R$.
By the universal property, as in Proposition $1.1$ in \cite{bui00} of 
canonical prolongation sequences of $R$-algebras, we have
\begin{align}
\label{upprol}
\Hom_R(N_{[m]1}B, N_{[m]1}B) \simeq \Hom_{R_*}(J_*(N_{[m]1}B),N_{[m]*}B). 
\end{align}
where $J_*(N_{[m]1}B)$ is the canonical prolongation sequence of $R$-algebras
as in Section $(1.2)$ of \cite{bui97}, with canonical $\pi$-derivations 
$\d: J_n(N_{[m]1}B) \map J_{n+1}(N_{[m]1}B)$ for all $n$.
Hence the identity map $g_0:= \mathbbm{1} \in 
\Hom_R(N_{[m]1}B,N_{[m]1}B)$ induces the following map of
prolongation sequences $\mg_* : J_*(N_{[m[1]}B) \map N_{[m]*}B$
 which is a system of $R$-algebra homomorphisms given by the following 
diagram where the $\pi$-derivations and the $R$-algebra maps commute at 
every level:
\begin{align}
\label{mgd3}
\xymatrix{
& \\
J_n(N_{[m]1}B) \ar[r]^{\mg_n} \ar@{.>}[u]& N_{[m]n+1}B \ar@{.>}[u]\\
J_{n-1}(N_{[m]1}B) \ar[r]^{\mg_{n-1}} \ar[u]^\d & N_{[m]n}B 
\ar[u]_\Delta \\
J_{1}(N_{[m]1}B) \ar[r]^{\mg_1} \ar@{.>}[u]^\d & N_{[m]2}B  
\ar@{.>}[u]_\Delta \\
N_{[m]1}B \ar[r]^{\mg_0 = \mathbbm{1}} \ar[u]^\d & N_{[m]1}B. 
\ar[u]_\Delta \\
} 
\end{align}

\begin{theorem}
\label{Niso1}
For all $m \geq 0$ the canonical map $\mg_*$ induces an isomorphism
$$
J_*(N_{[m]1}B) \simeq N_{[m]*}B 
$$
of prolongation sequences over $R_*$. In particular for all $n \geq 1$, the
above isomorphism implies 
$$
\mg_{n-1}: J_{n-1}(N_{[m]1}B)  \simeq N_{[m]n}B
$$
as $R$-algebras.
\end{theorem}

The above result is proved in Section \ref{kj}.

\color{black}

Note that, if $X$ is a group scheme over $\Spec R$ with $P_0$ as the 
identity section, then for all $m$, $J^mX$ is naturally a group scheme with
identity section $P_m$ as defined above. Then $N^{\mn}X$ is the kernel of 
the natural projection map of group schemes $u: J^{m+n}X \map J^mX$, that is,
we have the following exact sequence of group schemes
$$
0 \map  N^{\mn}X \map J^{m+n}X \stk{u}{\map} J^mX 
$$
for all $m$ and $n$. Observe that in the category of smooth $\pi$-formal
group schemes, the above  exact sequence is also right exact by Corollary $1.5$
of \cite{bui95}.

\cblue
The following result is a direct consequence of Theorem \ref{Niso1}.

\begin{theorem}
\label{affjet1}
If $X$ an affine scheme over $S$, then for all $m \geq 0$ we have 
$$
N^{[m]*}X \simeq J^*(N^{[m]1}X)
$$
as a canonical isomorphism of prolongation sequences of schemes over $S^*$.

In particular, for all $n \geq 1$, the above induces
a canonical isomorphism
$$
N^{\mn}X \simeq J^{n-1}(N^{[m]1}X)
$$
of schemes over $S$.
\end{theorem}

\color{black}

Now we will apply our result in the case of $\pi$-formal schemes. Assume that
$R$ is a $\pi$-adically complete discrete valuation ring and $l=R/\pi R$ be
its residue field. Let $X$ be a
$\pi$-formal scheme over $S = \Spf R$ with a marked point $P_0: \Spf R \map
X$. Then as a consequence of Theorem \ref{affjet1}, our next result 
characterizes 
$N^{\mn}X$ for any $\pi$-formal scheme $X$.

\cblue

\begin{theorem}
\label{formalcase1}
If $X$ is a $\pi$-formal scheme over $S$, then for all $m \geq 0$,
we have 
$$
N^{[m]*}X \simeq J^*(N^{[m]1}X)
$$
as a canonical isomorphism of prolongation sequences of $\pi$-formal 
schemes over $S^*$.

In particular, for all $n \geq 1$, the above induces a canonical
isomorphism
$$
N^{\mn}X \simeq J^{n-1}(N^{[m]1}X)
$$
of $\pi$-formal schemes over $S$.
\end{theorem}

\color{black}

Previously, the structure of the group scheme $N^nG(=N^{[0]n}G)$ in (\ref{short1}) was
unknown. In the case when $G$ is a smooth commutative $\pi$-formal group 
scheme, Buium in \cite{bui95} showed that $N^nG$ is a successive extension
of the additive group scheme $\hG$.
Let $\bb{W}_n$ denote the $\pi$-formal scheme ${\hat{\bb{A}}}^{n+1}$ endowed
with the group law of addition of Witt vectors. 
Then as an application 
of our Theorem \ref{formalcase1}, the canonical short exact sequence of 
(\ref{short1}) results in the following:


\cbl{
\begin{theorem}
\label{JetGg}
Let $R$ be of characteristic 0 satisfying $\vpi(p) \leq p-2$.
Let $G$ be a smooth commutative  $\pi$-formal group scheme of relative 
dimension 
$d$ over $\Spf R$. Then $N^{\mn}G \simeq \left(\bb{W}_{n-1}\right)^d$ as 
$\pi$-formal group schemes.
In particular we have the following short exact sequence of $\pi$-formal 
group schemes
$$
0 \map \left(\bb{W}_{n-1}\right)^d \map J^{m+n}G \map J^mG \map 0.
$$
\end{theorem}
}

We now discuss a consequence of our above results.
Let $L$ be any perfect field extension of $k$ and consider the 
perfect $\Ou$-algebra $B= W_\infty(L)$ and $B_n := W_n(L)\simeq B/\pi^{n+1}B$. 
Then consider the canonical exact sequence of groups 
\begin{align}
0 \map G(\pi^{n+1}B) \map G(B_n) \stk{u}{\map} G(L) 
\nonumber
\end{align}
where $u$ is the map induced from the quotient map of rings $B_n \map L$ and 
$G(\pi^{n+1}B)$ is the kernel of $u$. Note that $G(B_n) = G(W_n(L)) \simeq
J^nG(L)$. 
Then our Theorem \ref{formalcase1} implies that $N^nG \simeq J^{n-1}(N^1G)$.
This gives a geometric characterization of the group $G(\pi^{n+1}B)$ as follows
\begin{align}
G(\pi^{n+1}B) \simeq J^{n-1}(N^1G)(L) \simeq N^1G(B_{n-1}).
\nonumber
\end{align}
More so, if $R$ is of characteristic 0 such that 
$\vpi(p) \leq p-2$ and $G$ is a commutative smooth group scheme of 
dimension $d$, then our Theorem \ref{JetGg} implies $G(\pi^{n+1}B) \simeq
B_n^d$ where the group law is the one induced from the 
additive structure of the ring $B_n$. 
As for example, if $L= \bF_p$, then $B_n= \Z/p^{n+1}\Z$. Hence for any 
commutative smooth $\pi$-formal group scheme $G$ of relative dimension $d$
over $\Spf \Z_p$, we have 
$$
G(p^{n+1}\Z_p) \simeq \left( \Z/p^{n+1}\Z \right)^d
$$
as groups.

Given a $\pi$-formal group scheme $G$ over $\Spf R$, our Theorem 
\ref{formalcase1}
in the case $m=0$, gives for each $n$ the canonical lift of Frobenius 
$\fra: N^nG\simeq J^{n-1}(N^1G) \map N^{n-1}G \simeq J^{n-2}(N^1G)$ 
which satisfies 
\begin{align}
\label{latfrobcom}
	\phi \circ i \circ \fra = (\phi)^{\circ 2} \circ i.
\end{align}	
Consider the $R$-module $\varinjlim \Hom(N^nG,\hG)$. Then the pull-back map
$\fra^*$ makes the above into an $R\{\fra^*\}$-module. Then
we define as in \cite{BS_b},
$$
\Hd(G):= \varinjlim_n \frac{\Hom(N^nG,\hG)}{i^* \phi^*(\bX_{n-1}(G)_\phi)}.
$$
By (\ref{latfrobcom}), $\fra$ on $\varinjlim \Hom (N^nG, \hG)$ 
descends on $\Hd(G)$ and the resulting semilinear action on $\Hd(G)$ 
will be denoted as $\fra^*$. Now consider the $R$-module
$$
\bXp(G) := \varinjlim~ (\bX_n(G)/\phi^*(\bX_{n-1}(G)_\phi)).
$$
The above $R$-modules satisfy 
\begin{align}
\label{firstexact}
	0 \map \bXp(G) \map \Hd(G) \map \bI(G) \map 0
\end{align}
where $\bI(G)$ is an $R$-submodule of $\Ext (G,\hG)$ defined as in 
(\ref{bigI}). In the case when $A$ is an elliptic curve, $\bI(A)$ is the 
$R$-submodule generated by the class $\eta_{J^1A}$ associated to (\ref{short2}).
Then from Theorem $9.4$ in \cite{BS_b}, it follows that $\bI(A) = R\langle 
\eta_{J^1A}
\rangle \subseteq H^1(A,\Ou_A)$. The above inclusion is an equality 
if and only if $\eta_{J^1A}$ is a basis for $H^1(A,\Ou_A)$, which means that
$J^1A \simeq A^\sharp$ where $A^\sharp$ is the universal vectorial extension of 
$A$. This is equivalent to $f^1(A,\omega) \in R^*$ for all invertible 
$1$-forms $\omega$ on $A$.

Let $A$ a $\pi$-formal abelian scheme over $\Spf R$. Then we have the 
following map between the short exact sequences of $R$-modules as in
$(1.1)$ of \cite{BS_b}
\begin{align}
\xymatrix{
	0 \ar[r] & \bXp(A) \ar[d]^\Upsilon \ar[r] & \Hd(A) \ar[d]^\Phi \ar[r] & \bI(A) \ar@{^{(}->}[d] \ar[r] & 0 \\
	0 \ar[r] & H^0(A,\Omega_A) \ar[r] &\bH^1_{\mathrm{dR}}(A) \ar[r] & H^1(A,\Ou_A) \ar[r] & 0.
	}
\end{align}	
Apriori, it is not obvious as to whether $\bXp(A)$ and $\Hd(A)$ are finite
 free $R$-modules. However it was shown in \cite{BS_b} that the above 
$R$-modules when tensored with $K$ (the fraction field of $R$) are finite 
dimensional vector spaces over $K$. Our next result shows that the $R$-modules 
$\bXp(A)$ and $\Hd(A)$ are finite free over $R$.

\begin{theorem}\label{yeah}
Let $A$ be a $\pi$-formal abelian scheme of relative dimension $g$ 
over $\Spf R$. Then
we have 
\begin{enumerate}
\item The $R$-module $\bXp(A)$ is free of rank $g$.
\item The $R$-module $\Hd(A)$ is free satisfying
	$g \leq \rk_R\Hd(A) \leq 2g$.
\end{enumerate}		
\end{theorem}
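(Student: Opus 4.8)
The plan is to leverage Theorem \ref{yess} to reduce everything to the first level, then play off the integral prolongation structure against the known rational statement from \cite{BS_b}. For part (1), I would first observe that, by Theorem \ref{yess} with $m=0$, we have $N^nA \simeq J^{n-1}(N^1A)$ compatibly in $n$, and since $A$ is an abelian scheme with $v_\pi(p)\leq p-2$ one knows $N^1A \simeq \hG^g$ (the relative-dimension-$g$ analogue of (\ref{short2}), via the exponential of $\delta$ identifying $N^1A$ with the Lie algebra). Hence $\bX_n(A) = \Hom(N^nA,\hG) = \Hom(J^{n-1}(\hG^g),\hG)$, and the theory of delta characters of $\hG$ gives that each such Hom-module is free with the Frobenius pullback $\phi^*$ acting in the standard way; therefore $\bXp(A) = \varinjlim \bX_n(A)/\phi_A^*(\bX(A)_\phi)$ is exactly the quotient by the image of one application of $\phi^*$, which by a direct computation with the ring $R\{\phi^*\}$ acting on $\Hom(J^\bullet\hG^g,\hG)$ is free of rank $g$. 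This is essentially a module-theoretic bookkeeping step once the identification $N^1A\simeq\hG^g$ is in hand, and it upgrades the rational computation $\dim_K \bXp(A)\otimes K = g$ of \cite{BS_b} to an integral one.

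For part (2), freeness of $\Hd(A)$ is the main point, since the rank bounds $g \leq \rk \leq 2g$ then follow immediately: tensoring (\ref{firstexact}) with $K$ and invoking the finite-dimensionality results of \cite{BS_b} gives $\dim_K \Hd(A)\otimes K$ sandwiched between $g$ (from $\bXp$) and $2g$ (from $\bXp$ plus $\bI(A)\subseteq H^1(A,\Ou_A)$, which has dimension $g$). So I would concentrate on showing $\Hd(A)$ is free, equivalently torsion-free (finite generation being already clear from the finite-dimensionality over $K$ plus the description of $\Hd(A)$ as a quotient of finitely generated modules over the noetherian ring $R$ — here I would use that $R$ is a complete DVR, so finitely generated torsion-free is automatically free). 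Torsion-freeness I would extract from the short exact sequence (\ref{firstexact}): $\bXp(A)$ is free by part (1), and $\bI(A)$ sits inside $H^1(A,\Ou_A)$, which is free over $R$; hence $\bI(A)$ is torsion-free, and an extension of a torsion-free module by a free module is torsion-free. The one gap to fill is that $\bI(A)$ is genuinely a submodule of the free module $H^1(A,\Ou_A)$ — this is the content of the cited Theorem $9.4$ of \cite{BS_b} (for elliptic curves) and its abelian-scheme generalization, realizing $\bI(A)$ via the extension class of $J^1A$ inside $\Ext(A,\hG)\simeq H^1(A,\Ou_A)$.

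I expect the main obstacle to be the integral identification $N^1A \simeq \hG^g$ and the resulting control of $\bX_n(A)$ as a free module with a well-understood $\phi^*$-action; the rational version of all of this is in \cite{BS_b}, but promoting it to an integral statement requires that the prolongation-sequence structure furnished by Theorem \ref{yess} interacts cleanly with the Hom-modules, i.e. that $\Hom(J^{n-1}(\hG^g),\hG)$ is genuinely free and that the transition maps (pullback along the canonical lift of Frobenius $\phi_{N^1A}$, which exists precisely because $\{N^{[0]n}A\}$ is a prolongation sequence) are the expected ones. Once that is secured, the rest is the homological argument in the previous paragraph: freeness propagates through (\ref{firstexact}), and the rank bounds drop out of the $K$-dimension computation.
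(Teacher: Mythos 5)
Your reduction for part (2) is essentially the paper's argument and is fine \emph{granted} part (1): once $\bXp(A)$ is free of rank $g$ and one notes $\bI(A)\subseteq \Ext(A,\hG)\cong R^g$ is a finitely generated torsion-free module over a DVR (this is true by the very definition (\ref{bigI}) of $\bI(A)$ as a direct limit of images inside $\Ext(A,\hG)$ --- no appeal to Theorem 9.4 of \cite{BS_b} is needed), the sequence (\ref{firstexact}) exhibits $\Hd(A)$ as an extension of a free module by a free module, hence free, with the rank bounds coming from the $K$-dimensions. The problem is your part (1). Your argument rests on the identification $\bX_n(A)=\Hom(N^nA,\hG)$, and this is false: a $\d$-character is a homomorphism $J^nA\map\hG$, and restriction along $i:N^nA\inj J^nA$ gives only an injection $\bX_n(A)\inj \Hom(N^nA,\hG)$ whose image is the kernel of the connecting map $\partial$ to $\Ext(A,\hG)$ (sequence (\ref{sel})). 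The discrepancy, measured by $\bI_n(A)=\mathrm{image}(\partial)$, is exactly the nontrivial content here. Concretely, if your identification held one would get $\bX_1(A)\cong\Hom(N^1A,\hG)\cong\Hom(\hG^g,\hG)\cong R^g\neq 0$ for every abelian $A$, contradicting the fact that for a non-CL elliptic curve over $\Z_p$ one has $\bX_1(A)=0$ (lower splitting number $2$). So while it is true that $N^1A\simeq\hG^g$ and that $\Hom(N^nA,\hG)=\Hom(J^{n-1}(\hG^g),\hG)$ is free with a transparent $\fra^*$-action, none of this directly computes $\bX_n(A)$ or the quotient $\bX_n(A)/\phi^*(\bX_{n-1}(A)_\phi)$; note also that even on the kernels the two Frobenii do not simply match, since $i^*\phi^*\Theta=\fra^*(i^*\Theta)+\gamma\bPsi_1$ (Proposition \ref{diff}), another point your ``standard action'' gloss skips.

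What is actually needed, and what the paper does, splits into two steps that your proposal does not supply. First, finite generation of $\bXp(A)$ over $R$: this comes from showing that for $n\geq\mup$ the map $\phi^*:\bX_n(A)/\bX_{n-1}(A)\map\bX_{n+1}(A)/\bX_n(A)$ is bijective (using that $\bI_nA/\bI_{n-1}A=0$ once the $\bI_n$ stabilize, together with a splitting of $0\map\bX_n/\bX_{n-1}\map\Hom(N^n,\hG)/\Hom(N^{n-1},\hG)\map\bI_n/\bI_{n-1}\map 0$), whence $\bX_\infty(A)$ is generated as an $R\{\phi^*\}$-module by finitely many characters of order $\leq\mup$ and $\bXp(A)$ is finitely generated. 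Second, $\pi$-torsion-freeness of the quotient $\bXp(A)$, which does not follow formally from freeness of the $\bX_n(A)$: the paper argues directly that if $\pi f=\phi^*g$ then, writing $\phi^*g(\bx)=\sum_I a_I\bx^{qI}+\pi h(\bx)$, every $a_I$ is divisible by $\pi$, so $g/\pi\in\bX_\infty(A)$ and $f=\phi^*(g/\pi)$, i.e.\ $[f]=0$ in $\bXp(A)$. Your proposal contains neither the stabilization/finite-generation step nor any torsion-freeness argument for the quotient, and the shortcut you use in their place is based on an incorrect identification; as written the proof of part (1), and hence of the theorem, does not go through.
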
	

As a consequence of the above, we obtain the following (in 
Section \ref{intmod})
\begin{theorem}\label{fin-gen}
For an abelian scheme $A$ of relative dimension $g$ over $\Spf R$, 
$\bX_{\infty}(A)$  is  freely generated as an $R\{\phi^{*}\}$-module 
by $g$ $\d$-characters of order at most $g+1$.
\end{theorem}

Buium in \cite{bui95} proved the above result for $\bX_\infty(A)_K:=
\bX_{\infty}(A) \otimes_R K$. 
The positive characteristic analogue of the above results in the case of
Anderson modules were shown in \cite{PS-1}.

Let us now assume $R = W(l)$ and $K$ be its fraction field.
Let $$\mathrm{\Iso}(\Hd(A)):= (\Hd(A)_K,\mff^{*}, \Hd(A)_K^{\bullet})$$ 
denote the object in the category
of filtered isocrystals where $\mff^{*}$ is the semilinear operator on 
$\Hd(A)_K$ and $\Hd(A)_K^{\bullet}$ denote the filtration given by 
$\Hd(A)_K \supset \bXp(A)_K \supset \{0\}$. 

Let $A_0$ denote the special fiber of $A$ over $\Spec l$. Let $\Hcr(A)$
denote the first crystalline cohomology of $A_0$ with coefficients in $R$.
Then by the comparison theorem in \cite{BO}, we have $\Hcr(A) \simeq \Hdr(A)$.
Consider the filtered isocrystal
$$\mathrm{\Iso} (\Hcr(A)_K):= (\Hcr(A)_K, \Fc, 
\Hcr(A)_K^{\bullet})$$ where 
$\Fc $ is the semilinear operator on $\Hcr(A_0,W(l))_K$ induced
from the absolute Frobenius on $A_0$
and $\Hcr(A))^{\bullet}_K$ is the Hodge filtration given by 
$\Hcr(A)_K \supset H^0(A, \Omega_A)_K \supset \{0 \}$.
In the next result, our delta geometric object $\Hd(A)$ gives a 
character theoretic interpretation of the crystalline cohomology.

\begin{theorem}\label{Iso_crys} 
Let $A$ be an elliptic curve over $\Z_p$. Then $\mathrm{\Iso}(\Hd(A)_{\Q_p})$ is
a weakly admissible object in the category of filtered isocrystals.

(1) If $A$ is a non-CL elliptic curve then
$$
\mathrm{\Iso} (\Hd(A)_{\Q_p}) \simeq \mathrm{\Iso} (\Hcr(A)_{\Q_p})
$$
in the category of filtered isocrystals.

(2) If $A$ has CL then $$\mathrm{\Iso} (\Hd(A)_{\Q_p}) \simeq 
\mathrm{\Iso} (H^0(A, \Omega_A)_{\Q_p})$$ 
in the category of filtered isocrystals 
where $\mathrm{\Iso} (H^0(A,\Omega_A)_{\Q_p})$
is the one dimensional sub-object of $\mathrm{\Iso}(\Hcr(A)_{\Q_p})$.
\end{theorem}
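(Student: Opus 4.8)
The plan is to analyze the filtered isocrystal $\mathrm{Iso}(\Hd(A)_K)$ via the map $\Phi\colon \Hd(A)\to \Hdr(A)$ and its compatibility with the Frobenius structures, using the commutative diagram of short exact sequences together with the identification $\Hdr(A)\simeq\Hcr(A)$ from \cite{BO}. First I would observe that for an elliptic curve over $\Z_p$, Theorem \ref{yeah} gives $\rk_R\bXp(A)=1$ and $1\leq\rk_R\Hd(A)\leq 2$; the two cases in the statement correspond exactly to the two possible ranks. The dichotomy is governed by whether $\bI(A)=R\langle\eta_{J^1A}\rangle$ equals $H^1(A,\Ou_A)$ or is a proper submodule, which by the discussion following \eqref{firstexact} is equivalent to whether $f^1(A,\omega)$ is a unit — i.e.\ whether $A$ is non-CL or has CL (recalling $f^1$ vanishes precisely when $A$ has a canonical lift of Frobenius, and by the elliptic-curve discussion after \eqref{firstexact}, $\bI(A)=H^1(A,\Ou_A)$ iff $f^1(A,\omega)\in R^*$).

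In the non-CL case, I would argue that $\Phi$ is an isomorphism after tensoring with $\Q_p$: it fits in the map of exact sequences, $\Upsilon$ is a map between rank-one free $R$-modules, and the right vertical inclusion $\bI(A)\hookrightarrow H^1(A,\Ou_A)$ becomes an isomorphism rationally since both sides are rank one (here one uses that $\bXp(A)_K\to H^0(A,\Omega_A)_K$ and $\Hd(A)_K\to\Hdr(A)_K$ are isomorphisms, a fact already established in \cite{BS_b}). Then I must check that $\Phi\otimes\Q_p$ intertwines $\mff^*$ with $\Fc$ and respects the filtrations. The filtration compatibility is immediate from the definition of $\Hd(A)_K^\bullet$ as $\Hd(A)_K\supset\bXp(A)_K\supset 0$ mapping to the Hodge filtration. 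For the Frobenius compatibility, the key input is the identity \eqref{latfrobcom} relating $\phi_{N^1G}$ to $\phi_G$, which is what makes $\fra^*$ (hence $\mff^*$) descend; I would trace through how the lift of Frobenius on arithmetic jet spaces recovers, under $\Phi$, the crystalline Frobenius $\Fc$ via the comparison of \cite{BO}, presumably already half-done in \cite{BS_b}. The conclusion that $\mathrm{Iso}(\Hd(A)_K)$ is weakly admissible then follows because it is isomorphic to $\mathrm{Iso}(\Hcr(A)_{\Q_p})$, which is weakly admissible by $p$-adic Hodge theory (the crystalline cohomology of an abelian variety is weakly admissible — Newton and Hodge polygons agree at the endpoints and Mazur's inequality is an equality for ordinary/supersingular elliptic curves, or simply: it comes from a crystalline Galois representation).

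In the CL case, $A$ admits a lift of Frobenius, so $J^1A\simeq A\times_R\hG$ splits, the extension class $\eta_{J^1A}$ is torsion (in fact zero after tensoring, but one should be careful: $f^1$ vanishing means the class generates a proper submodule), hence $\bI(A)_K=0$ and so $\bXp(A)_K\to\Hd(A)_K$ is an isomorphism of one-dimensional $K$-vector spaces. Thus $\mathrm{Iso}(\Hd(A)_K)$ is one-dimensional, and under $\Phi$ it maps isomorphically onto the sub-$K$-vector space $H^0(A,\Omega_A)_K\subseteq\Hdr(A)_K$; I would check that this subspace is $\mff^*$-stable — which it is, because on the invariant differentials the lift of Frobenius acts, and this is precisely the sub-isocrystal $\mathrm{Iso}(H^0(A,\Omega_A)_{\Q_p})$ of $\mathrm{Iso}(\Hcr(A)_{\Q_p})$ (the ``unit root'' or slope-$0$ part, or its complement, depending on normalization). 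Weak admissibility of a one-dimensional filtered isocrystal is the condition $t_N=t_H$, which holds here because the filtration jump and the slope both vanish (or match) for this canonical sub-object.

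The main obstacle I anticipate is the Frobenius-compatibility of $\Phi$ in the non-CL case: verifying that the semilinear operator $\mff^*$ on $\Hd(A)_K$, built from the lateral/jet-space Frobenius via the descent through \eqref{latfrobcom}, corresponds under the de Rham–crystalline comparison to the absolute Frobenius $\Fc$ on $\Hcr(A_0)$. This requires carefully relating the arithmetic-jet-space construction of the Frobenius to the crystalline Frobenius, likely by reducing mod $\pi$ and tracking how $\phi_{N^1G}^*$ acts on $\d$-characters versus how $\Fc$ acts on crystalline classes; I expect this to lean heavily on the explicit computations and comparison maps already set up in \cite{BS_b}, with the new content being that everything is now defined integrally (Theorem \ref{yeah}) so that the isomorphism, a priori only rational, is pinned down and its failure to be integral is measured by $f^1$ as claimed in the abstract.
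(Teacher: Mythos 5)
There is a genuine gap, and you have flagged it yourself without filling it: the entire comparison hinges on identifying the semilinear operator $\mff^*$ on $\Hd(A)_{\Q_p}$ with the crystalline Frobenius structure, and your proposal offers no mechanism for this beyond the hope that it is ``presumably already half-done in \cite{BS_b}.'' It is not; this identification is precisely the new content of the paper's Section \ref{last}. The paper does \emph{not} prove that $\Phi$ intertwines $\mff^*$ with $\Fc$ (the step you single out as your plan); instead it computes the matrix of $\mff^*$ in the basis $\{\Psi_1,\Psi_2\}$, with entries $\gamma$ and $\lambda$, and then pins down these constants by a geometric interpretation of the arithmetic Picard--Fuchs operator: Lemma \ref{nuiso} and Proposition \ref{Lambdaform} show $\Lambda_{\Theta_2}=\phi^2-\lambda\phi+\gamma$, while Buium's Theorem 1.10 in \cite{bui97} gives $\Lambda_{\Theta_2}=\phi^2-a_p\phi+p$, whence $\lambda=a_p$, $\gamma=p$. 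Only then does one know that $\mff^*$ and $\Fc$ have the same characteristic polynomial $T^2-a_pT+p$; since its roots are distinct, the two operators are conjugate on the two-dimensional space, and this abstract conjugacy (not equivariance of $\Phi$) yields the isomorphism of filtered isocrystals, with weak admissibility also resting on $\gamma=p$, $\lambda=a_p$ via Theorem 9.7 of \cite{BS_b}. Your outline is also structurally circular on this point: you deduce weak admissibility from the isomorphism with $\mathrm{Iso}(\Hcr(A)_{\Q_p})$, but the isomorphism is exactly what cannot be obtained without first computing the Frobenius data.

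The same omission undermines your CL case. That $\bXp(A)_{\Q_p}=\Hd(A)_{\Q_p}$ is one-dimensional and maps to $H^0(A,\Omega_A)_{\Q_p}$ is fine, but to identify it with the sub-isocrystal of $\mathrm{Iso}(\Hcr(A)_{\Q_p})$ and to check $t_N=t_H$ you must know the scalar by which $\mff^*$ acts; the paper gets $\mff^*(\Psi_1)=\gamma\Psi_1$ with $\gamma=-\beta$, $\beta$ the non-unit root of $T^2-a_pT+p$, again via $\Lambda_{\Theta_1}=\phi+\gamma$ compared with \cite{bui97}, and separately shows (using the canonical lift $\tilde F$ and Theorem 2.4 of \cite{BO}) that $\Fc$ preserves $H^0(A,\Omega_A)_{\Q_p}$ and acts there by $\beta$. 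Your assertion that the slope and filtration jump ``both vanish (or match)'' is exactly the point that needs the value of $\gamma$; without the Picard--Fuchs input the slope of $\mff^*$ is unknown. In short, the proposal correctly sets up the diagram chase and the rank dichotomy, but the decisive step --- relating the jet-space Frobenius to $a_p$ and $p$ through $\Lambda_\Theta$ --- is missing, and the alternative route you propose (direct Frobenius-equivariance of $\Phi$) is not established in the paper and is not needed for, nor obviously capable of, completing the argument.
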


Here we would like to remark on the comparison between the integral structures
of the $R$-modules $\Hd(A)$ and $\Hcr(A)$. The proof of Theorem 
\ref{Iso_crys} implies that the canonical $R$-module map $\Phi: \Hd(A) \map 
\Hcr(A)$ is injective. Hence $\Phi$ is an isomorphism if and only if 
$\bI(A) =  H^1(A,\Ou_A)$.
In other words $\Phi: \Hd(A) \simeq \Hcr(A)$ if and only if the delta
modular form $f^1(A,\omega)$ 
is an invertible element of $R$ for all invertible $1$-forms $\omega$ on $A$.
Hence the difference between the integral structures of $\Hd(A)$ and 
$\Hcr (A)$ is measured by the canonical class $\eta_{J^1A} \in H^1(A,\Ou_A)
$ which determines the value of the delta modular form $f^1$ at 
$(A,\omega)$.

Also in the light of the above results, one may ask for the true connection 
between $\Hd(A)$ and the prismatic cohomology by Bhatt and Scholze 
\cite{bhsch}.

\section{Plan of the paper} In Section \ref{pre}, we review the basic definitions of $\d$-rings, prolongation of schemes over a $\d$-ring, and arithmetic jet spaces. In Section \ref{swv}, we introduce the notion of shifted Witt vectors. 
Next, we define the centered polynomials in Section \ref{cp} and study their basic properties in Lemma \ref{sideal} and Proposition \ref{coord3}. 
In Section \ref{kj}, we prove that the generalized $n$-th kernels are, in fact, the $(n-1)$-th jet spaces of the first kernel. 
In Section \ref{intmod}, we briefly review the construction of $R$-modules 
$\bXp(A)$ and $\Hd(A)$ from \cite{BS_b,bui95}. 
We then prove the finiteness of the $R$-modules $\bXp(A)$ and $\Hd(A)$ 
when $A$ is an abelian scheme.

In Section \ref{last}, we interpret the arithmetic Picard Fuchs operator
associated to a delta character $\Theta$. Using this we prove Theorem 
\ref{Iso_crys} on the comparison of $\Hd(A)$ with the crystalline 
cohomology $\Hcr(A)$ where $A$ is an elliptic curve over $\Z_p$.

In Appendix \ref{FuncPoin}, an alternate proof of Theorem of \ref{Niso1} is
given using the functor of points approach. This proof has been pointed out
by the anonymous referee.

\section{Notation}
\label{notation}
We collect here some notations fixed throughout the paper.
\begin{align*}
	p &= \text{a prime number} \\
%
\mcal{O}&= \text{a Dedekind domain}\\
	\mfrak{p}&= \text{a fixed prime ideal of}~ \mcal{O}\\
	\pi &= \text{a generator of }\mfrak{p} \mathcal{O}_{\mfrak{p}} \\
	k &= \text{the residue field of $\mcal{O}$ at $\pi$ with cardinality $q$} \\
	R &= \text{a fixed $\mcal{O}$-algebra} \\
	\phi &= \text{an endomorphism of $R$ satisfying $\phi(x) \equiv x^q \bmod \mfrak{p}$, for all $x \in R$} \\
%
	S &= \Spec R \\
& \text{When $R$ is a $\pi$-adically complete discrete valuation ring then} \\
S &= \Spf R\\
	M_K &= K\otimes_R M, \text{ for any $R$-module $M$ and } 
		K=\mathrm{Frac}(R) \\
	\mfrak{m} &= \text{the maximal ideal of }R\\
	\vpi &= \text{the valuation on $R$ normalized such that }\vpi(\pi)=1\\
	e &= \text{the absolute ramification index $\vpi(p)\leq p-2$} \\
	l &= \text{the residue field of $R$} \\
G &= \text{a commutative smooth $\pi$-formal group scheme over $\Spf R$}\\
A &= \text{a $\pi$-formal abelian scheme over $\Spf R$}\\
a_{p}&= p+1-\#(A(\FF_{p})) \text{ when $A/\ZZ_{p}$ is an elliptic curve}\\
	\omega&= \text{normalized invariant differential 1-form of an elliptic curve.}
\end{align*}

\section{Preliminaries}\label{pre}

Let $\Ou$ be a Dedekind domain and $\mfrak{p}$ a non-zero prime ideal with
$k$ as the residue field and $q$ be the cardinality of $k$ where $q$ is a
power of a prime $p$. Let $\pi$ be one of the uniformizers of $\mfrak{p}$.
For any $\Ou$-algebra $B$ and $B$-algebra $A$, we define a $\pi$-derivation 
$\d$ as a set-theoretic theoretic map $\d:B \map A$ that satisfies  for 
all $x,y \in B$,

\cblue
$(i)~ \d (1) = 0$
\color{black}

$(ii)~ \d (x+y) = \d x + \d y + C_\pi(u(x),u(y)) $

$(iii)~ \d(xy) = u(x)^q \d y + u(y)^q \d x + \pi \d x \d y$

where $u:B \map A$ is the structure map and 
$$C_\pi(X,Y) = 
\left\{
\begin{array}{ll} \frac{X^q+Y^q -(X+Y)^q}{\pi}, & \mb{if }\mb{char } \Ou =0\\
0, & \mb{otherwise.}
\end{array}
\right.
$$ 
Given such a $\pi$-derivation $\d$, define $\phi(x):= u(x)^q + \pi\d x$
which is then a ring homomorphism satisfying
$$
\phi(x) \equiv u(x)^q \bmod \mfrak{p}.
$$
We will call such a $\phi$ a {\it lift of Frobenius} with respect to $u$. 
Fix an $\Ou$-algebra $R$ with a $\pi$-derivation $\d$ and call $S= \Spec R$.
Let $X$ and $Y$ be schemes over $S$. We say a pair $(u,\d)$ is a {\it 
prolongation} and we write $Y\stk{(u,\d)}{\map} X$, if $u:Y \map X$ is a 
map of $S$-schemes and $\d: \Ou_X \map u_*\Ou_Y$ is a $\pi$-derivation making
the following diagram commute:
\begin{align}
\label{proldia}
\xymatrix{
R \ar[r] & u_*\Ou_Y \\
R \ar[u]^\d \ar[r] &\Ou_X \ar[u]_\d
}
\end{align}

\cblu{In particular, when $Y = \Spec B$ and $X = \Spec A$ are affine schemes
where $B$ and $A$ are $R$-algebras and let (with slight abuse of notation) 
the induced map on the coordinate rings still be denoted by $u$. Then $\d:
A \map B$ is a $\pi$-derivation with respect to $u$ that respects the fixed
$\pi$-derivation on $R$ as given in the above diagram \eqref{proldia}.
}

We now recall the notion of prolongation and arithmetic jet spaces over $\pi$-formal schemes. Some of the definitions below were introduced by Buium
in the case of $\pi$-formal schemes. However their extension to the case 
of schemes is natural and without any technical challenges.
For a more detailed treatment of this section we refer to \cite{bui95, BS_b}.
As in page 103 in \cite{bui00}, a {\it prolongation sequence of $S$-schemes} 
is a sequence 
$$
S \stk{(u,\d)}{\longleftarrow} T^0 \stk{(u,\d)}{\longleftarrow} 
T^1 \stk{(u,\d)}{\longleftarrow} \cdots,
$$
where $T^i \stk{(u,\d)}{\longleftarrow} T^{i+1}$ are 
prolongations satisfying
$$
u^* \circ \d = \d \circ u^*
$$
where $u^*$ is the pull-back morphism of sheaves induced by $u$
for each $i$. We will denote a prolongation sequence as 
$T^*$ or $\{T^n\}_{n\geq 0}$.
Prolongation sequences naturally form a category $\mcal{C}$. 
Define $S^*$ the prolongation sequence defined by $S^i = \Spec R$
for all $i$, $u = \mathbbm{1}$ and $\d$ is the fixed 
$\pi$-derivation on $R$. Then let $\mcal{C}_{S^*}$ denote the
category of prolongation sequences defined over $S^*$.

\cblu{Similarly, a prolongation sequence $C_* = \{C_n\}_{n\geq 0}$ of 
$R$-algebras is a sequence
$$
R \stk{(u_{-1},\d_{-1})}{\longrightarrow} C_0 \stk{(u_0,\d_0)}{\longrightarrow}
C_1 \stk{(u_1,\d_1)}{\longrightarrow} \cdots ,
$$ 
where $C_n \stk{(u_n,\d_n)}{\longrightarrow} C_{n+1}$ is a $\pi$-derivation of 
$R$-algebras satisfying \eqref{proldia} and we have 
$$
u_{n+1} \circ \d_n = \d_{n+1} \circ u_n,
$$
for all $n$. Let $R_* = R \stk{(\mathbbm{1},\d)}{\longrightarrow} R
\stk{(\mathbbm{1},\d)}{\longrightarrow} \cdots$ be the prolongation sequence of
algebras with the $\pi$-derivations at all levels to the fixed $\pi$-derivation
$\d$ on $R$.

Let $D_*$ be a prolongation sequence of $R$-algebras with $D_n 
\stk{(v_n,\Delta_n)}{\longrightarrow} D_{n+1}$ to be the $\pi$-derivation at a
given level $n$.  A {\it morphism of prolongation sequences of $R$-algebras}
$\mg_* : C_* \map D_*$ is a system 
of $R$-algebra homomorphisms $g_n : C_n \map D_n$ for all $n$ given by
\begin{align}
\label{mgd0}
\xymatrix{
& \\
C_n \ar[r]^{\mg_n} \ar@{.>}[u]& D_n \ar@{.>}[u]\\
C_{n-1} \ar[r]^{\mg_{n-1}} \ar[u]^{(u,\d)} & D_{n-1} \ar[u]_{(v,\Delta)} \\
C_1 \ar[r]^{\mg_1} \ar@{.>}[u] & D_{1} \ar@{.>}[u] \\
C_0 \ar[r]^{\mg_0} \ar[u]^{(u,\d)} & D_{0}, \ar[u]_{(v,\Delta)} \\
}
\end{align}
satisfying
\begin{align*}
(i)~ g_n \circ u &= v \circ g_{n-1} \\
(ii)~ g_n \circ \d &= \Delta \circ g_{n-1}
\end{align*}
for all $n$. Let $\mcal{C}_{R_*}$ denote the category of prolongation sequences
of $R$-algebras that admit a map from $R_*$.
}

For any $S$-scheme $X$ and for all $n \geq 0$ we define the 
$n$-th jet space functor $J^nX$ as  
$$
J^nX(B):= X(W_n(B)) = \Hom_S(\Spec (W_n(B)), X)
$$
for any $R$-algebra $B$. Then $J^nX$ is representable by a 
scheme over $S$ (This was shown in \cite{bor11b} for 
$S= \Spec \Z$ and in \cite{bps} for a general 
prolongation sequence $S^*$).
Then $J^*X:=\{J^nX\}_{n\geq 0}$ forms a prolongation sequence of $S$-schemes.
and is called the {\it canonical prolongation sequence} as 
in \cite{bui00} where $\phi: J^{n+1}X \map J^nX$ denote the lift of Frobenius
morphism for each $n$. 
\cblu{
In particular, if $X = \Spec A$ is an affine scheme where $A$ is an $R$-algebra,
then $J^nX = \Spec J_nA$ where $J_nA$ is the canonical $R$-algebra representing
the above $n$-th jet space functor $J^nX$. Then $J_*A = \{J_nA\}_{n=0}^\infty$
is called the {canonical prolongation sequence of $R$-algebras}.
}

By Proposition 1.1 in \cite{bui00}, 
$J^*X$ satisfies the universal property that for any $T^* \in
\mcal{C}_{S^*}$ and $X$ a scheme over $S$ we have 
\begin{align}
\Hom_S(T^0,X) =\Hom_{\mathcal{C}_{S^*}}(T^*,J^*X).
\end{align}
\cblu{
The above universal property in the case of prolongation sequences of 
$R$-algebras means that for any $C_* \in \mcal{C}_{R_*}$ we have 
\begin{align}
\label{univ}
\Hom_R(A,C_0) = \Hom_{\mcal{C}_{R_*}}(J_*A,C_*).
\end{align}
}

Given an $R$-algebra $B$, for any $n$, let $B^{\phi^n}$ denote
the $R$-algebra obtained by considering the structure map 
$R \stk{\phi^n}{\map} R \map B$. Then given an $S$-scheme $X$
we define $X^{\phi^n}$ as $X^{\phi^n}(B):= X(B^{\phi^n})$ for
any $R$-algebra $B$. Then it is easy to see that the above functor is
represented by the base change of $X$ over the map $\phi^n:S \map S$ given by
$X^{\phi^n} = X\times_{S,\phi^n} S$.

\section{Shifted Witt Vectors}\label{swv}
In this section we construct the general
$m$-shifted $\pi$-typical Witt vectors and describe its properties.
Note that the `$(-1)$-shifted' Witt vectors are the usual $\pi$-typical Witt 
vectors. 
\cblu{
For all $n$, let $W_n(B)$ be the $\pi$-typical Witt vectors of length $n+1$
(for details the reader may see \cite{BS_b}\cite{drin76}\cite{Lars}). 
For any $m,n \geq 0$, the 
$R$-algebra restriction map $T: W_{m+n}(B) \map W_m(B)$ is given by 
$$
T(b_0,\dots , b_{m+n}) = (b_0,\dots, b_m).
$$
Given an $R$-algebra $B$ with structure map $f:R \map B$, 
for a fixed $m$ we define the 
{\it $m$-shifted $\pi$-typical Witt vectors} to be the $R$-algebra  
$$
W_{\mn}(B) := W_{m}(R) \times_{W_m(B)} W_{m+n}(B)
$$
for all $n \geq 0$. Note that we have a bijection
\begin{align}
R^{m+1} \times B^n &\map W_m(R) \times_{W_m(B)} W_{m+n}(B) = W_{\mn}(B)
\end{align}
given by
\begin{align}
\big((r_0,\dots , r_m), (b_1, \dots , b_n)\big) &\mapsto \big((r_0,\dots , 
r_m),(f(r_0), \dots, f(r_m), b_1, \dots , b_n)\big). \nonumber
\end{align}}

Define 
$$
\Pi_{\mn}B= \left(R\times R^\phi \times
\cdots R^{\phi^m} \right) \times \left(B^{\phi^{m+1}}\times \cdots \times 
B^{\phi^{m+n}}\right)
$$ 
to be the product of rings and is also naturally an $R$-algebra.
For $n\geq 0$, set $\Pi_n B := B \times B^\phi \times \cdots \times 
B^{\phi^n}$.
Then we have a natural map $\Pi_{\mn}B \map \Pi_{m+n}B$. 
Consider the shifted ghost map 
$$
w:W_{\mn}(B) \map \Pi_{\mn}(B)
$$
given by 
$$(x_0,\dots, x_m, \dots, x_{m+n}) \map \langle x_0, x_0^q+\pi x_1, \dots, 
x_0^{q^{m+n}} + \pi
x_1^{q^{m+n-1}} + \cdots + \pi^{m+n} x_{m+n} \rangle
$$

\cblu{By definition,} $W_{\mn}(B)$ is naturally endowed with the Witt ring 
structure of addition and multiplication that makes $w$ a ring homomorphism.
%
%
We now define the following ring homomorphism on the ghost side:

\begin{enumerate}
\item
The {\it restriction} map $T_w: \Pi_{\mn}(B) \map \Pi_{[m]n-1}(B)$ as 
$$
T_w\langle z_0,\dots , z_m,z_{m+1},\dots , z_{m+n}\rangle = \langle z_0,\dots, 
z_m, z_{m+1} , \dots, z_{m+n-1} \rangle.
$$
Clearly $T_w$ is a map of $R$-algebra.

\item
The {\it Frobenius} map $F_w: \Pi_{\mn}(B) \map \Pi_{[m]n-1}(B^\phi)$  as 
$$
F_w\langle z_0 \dots z_{m-1},z_m, \dots , z_{m+n}\rangle = \langle \phi(z_0),
\dots , \phi(z_{m}), z_{m+2},\dots , z_{m+n}\rangle.
$$
Here, the $z_{m+1}$-th component gets dropped in the definition of $F_w$.
\end{enumerate}

Note that following the similar arguments as in the case of usual Witt vectors 
the $R$-algebra map $T:W_{\mn}(B) \map W_{[m]n-1}(B)$ given by 
$$
T(x_0,\dots , x_m, x_{m+1},\dots , x_{m+n}) = (x_0,\dots, x_m, x_{m+1},\dots
x_{m+n-1}).
$$

\begin{theorem}
\label{Wittone}
There exists a \cblue unique functorial \color{black}
ring homomorphism $\tF: W_{\mn}(B) \map W_{[m]n-1}(B)$ such that 
$$
\xymatrix{
W_{\mn}(B) \ar[d]_-{\tF} \ar[r]^w & \Pi_{\mn}(B)  \ar[d]^{F_w} \\
W_{[m]n-1}(B^\phi) \ar[r]^w & \Pi_{[m]n-1}(B^\phi) 
}
$$
commutes. Moreover if $\tF(x_0,\dots ,x_{m+n}) = (\tF_0,\dots, \tF_{m+n-1})$, 
then for all $0 \leq h \leq m+n-1$ we have
$$
\tF_h \equiv x_h^q \bmod \pi.
$$
\end{theorem}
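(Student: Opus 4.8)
The plan is to construct $\tF$ by the standard Witt-vector descent argument: produce the composite $F_w \circ w \colon W_{\mn}(B) \map \Pi_{[m]n-1}(B^\phi)$ and show it factors through the ghost map $w \colon W_{[m]n-1}(B^\phi) \map \Pi_{[m]n-1}(B^\phi)$. Since $w$ is injective when $B$ (hence $B^\phi$) is $\pi$-torsion-free, and since both $W$ and $\Pi$ are functorial and representable by flat affine $R$-schemes, it suffices to verify the factorization after tensoring up to a $\pi$-torsion-free cover: first prove the statement for the universal case $B = R[x_0,\dots,x_{m+n}]$ with its $\pi$-torsion-free structure (or, more simply, reduce to $B$ which is $\pi$-torsion-free and faithfully flat over the general $B$), check there that $F_w \circ w$ lands in the image of $w$, and then descend. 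Concretely, I would write $w(x_0,\dots,x_{m+n}) = \langle w_0,\dots,w_{m+n}\rangle$ with $w_j = \sum_{i=0}^j \pi^i x_i^{q^{j-i}}$, apply $F_w$ to get $\langle \phi(w_0),\dots,\phi(w_m), w_{m+2},\dots,w_{m+n}\rangle$, and then check that this tuple of length $m+n$ satisfies the Witt congruences defining the image of $w$ on $W_{[m]n-1}$.

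The key computation is to identify the components $\tF_0,\dots,\tF_{m+n-1}$. For the first $m+1$ ghost slots, $\phi(w_j) = \phi\big(\sum_i \pi^i x_i^{q^{j-i}}\big)$; since $\phi$ is a ring endomorphism of $R$ with $\phi(x) \equiv x^q \bmod \mfrak p$ and we apply it to polynomial expressions in the $x_i$, one expands $\phi(w_j)$ and matches it against the ghost expansion $\sum_{i=0}^j \pi^i \tF_i^{q^{j-i}}$ to solve recursively for $\tF_0,\dots,\tF_m$. (On $R$-entries, where the "lift of Frobenius" already exists, the shifted ghost map interacts with $\phi$ exactly as in the usual unshifted Witt theory.) For the remaining slots, the ghost components $w_{m+2},\dots,w_{m+n}$ feed into determining $\tF_{m+1},\dots,\tF_{m+n-1}$ via the same triangular recursion $w_{m+1+t} = \sum_{i=0}^{m+1+t}\pi^i \tF_i^{q^{m+1+t-i}}$; solving inductively produces polynomials $\tF_h \in \ZZ[1/\pi][x_0,\dots,x_{m+n}]$, and the point to check is that these actually lie in $\ZZ[x_0,\dots,x_{m+n}]$ — i.e., no denominators appear. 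This integrality is exactly where the structure of the lateral Frobenius (skipping the $(m+1)$-st ghost slot in the definition of $F_w$) is used, and is the analogue of the classical fact that Witt addition/Frobenius polynomials are integral; it follows from the Dwork-style lemma (a tuple of ghost components comes from an integral Witt vector iff the components satisfy $z_{j} \equiv z_{j-1}^{\,?}$-type congruences mod powers of $\pi$ — more precisely $w_j \equiv w_{j-1}$ appropriately), which one verifies here using $\phi(z) \equiv z^q \bmod \pi$.

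Finally, for the congruence $\tF_h \equiv x_h^q \bmod \pi$: from the recursion, modulo $\pi$ one has $w_j \equiv x_0^{q^j}$ and the ghost relation $w_{m+1+t} \equiv \tF_0^{q^{m+1+t}} \bmod \pi$ forces a cascade; more directly, reducing the defining identity $w_{h}^{\tF} := \sum_{i \le h}\pi^i \tF_i^{q^{h-i}}$ modulo $\pi$ against the known value of the $h$-th ghost component of $F_w\circ w$ gives $\tF_h^{q^0}\cdot(\text{leading term}) \equiv x_h^{q}$-type relation; peeling off one slot at a time by induction on $h$ yields $\tF_h \equiv x_h^q \bmod \pi$ exactly as in the classical computation that the $n$-th Frobenius component of a Witt vector is $\equiv x_n^p \bmod p$. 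I expect the main obstacle to be the bookkeeping in the integrality step — tracking precisely which ghost slots survive $F_w$ and checking that the resulting triangular system has integral solutions despite the "shift" — rather than any conceptual difficulty; once integrality is in hand, the factorization through $w$ and the mod-$\pi$ congruences are routine.
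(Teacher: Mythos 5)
Your overall strategy coincides with the paper's: reduce to $\pi$-torsion-free $B$ (or the universal case), use injectivity of the ghost map, and solve the triangular ghost recursion for the components $\tF_h$. Where you genuinely differ is in how integrality is established. The paper proves integrality and the congruence $\tF_h \equiv x_h^q \bmod \pi$ \emph{simultaneously}, by induction on $h$: the congruence for $i<h$ is exactly what lets the binomial expansion of $x_i^{q^{h+1-i}}-\tF_i^{q^{h-i}}$ be estimated term by term with $\pi$-valuation at least $(h-i)(v(q)-1)+1\geq 1$, so the recursion stays integral, and the congruence at step $h$ falls out of the same formula; so in the paper the congruence is the engine of the integrality induction, not an afterthought. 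You instead outsource integrality to a Dwork-type criterion on the universal $\pi$-torsion-free ring, which does admit a Frobenius lift $\sigma$ extending $\phi$, and deduce the congruence afterwards; this is a viable alternative, and indeed once integrality is known, comparing the $h$-th ghost identity modulo $\pi^{h+1}$ gives $\tF_h\equiv x_h^q \bmod \pi$ by the easy induction you indicate. What your sketch leaves undone is precisely the one non-routine Dwork verification, at the deleted slot and at the initial slots: you must check $w_{m+2}\equiv \sigma(\phi(w_m)) \bmod \pi^{m+1}$, and $\phi(w_j)\equiv \sigma(\phi(w_{j-1}))\bmod \pi^{j}$ for $j\le m$, where the fact that $\phi(\pi)\neq\pi$ but $\pi\mid\phi(\pi)$ enters; these follow from the standard congruences $w_{j+1}\equiv\sigma(w_j)\bmod \pi^{j+1}$ and play exactly the role of the paper's valuation estimate, so they are where the actual work sits. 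Two small repairs: integrality should be asserted over $R$ (or $\Ou$), not $\ZZ$, and you must also record that the first $m+1$ components produced by the recursion lie in $R$ (they are determined by $\phi(w_0),\dots,\phi(w_m)\in R$), since $W_{[m]n-1}$ has its initial coordinates in $R$ rather than in $B$.
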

\begin{proof}
We will prove the result using induction. It is also sufficient to assume 
that $B$ is $\pi$-torsion free. For $h=0$, we have $\tF_0 = \phi(x_0)$ 
which clearly satisfies the required condition. Assume that the result is 
true for $h-1$. Then we have $\tF_i = x_i^q + \pi y_i$ where $y_i \in B$ 
for all $i=0, \dots, h-1$. 
\cblu{
We have two distinct cases to consider. Suppose $h \leq m$. We have seen that
$\tF_0 = \phi(x_0)$ and hence let us assume by induction that $\tF_i = 
\phi(x_i)$ for all $i = 0, \dots, h-1$. Hence comparing the ghost coordinates
we get
\beqar
\tF_0^{q^h} + \pi \tF^{q^{h-1}} + \cdots + \pi^h \tF_h &=& 
\phi(x_0^{q^{h}} + \pi x_1^{q^{h-1}} + \cdots + \pi^h x_h) \\
& = &\phi(x_0)^{q^{h}} + \pi\phi(x_1)^{q^{h-1}} + \cdots + \pi^h \phi(x_h). 
\eeqar
Hence by the induction hypothesis along with the fact that $R$ is $\pi$-torsion
free, we have $\tF_h = \phi(x_h)$ which satisfies the required condition and 
therefore proves the required result.

Let us consider the other case when $h \geq m+1$.}
Then comparing the ghost coordinates we get
$$
\tF_0^{q^h} + \pi \tF^{q^{h-1}} + \cdots + \pi^h \tF_h = x_0^{q^{h+1}} + \pi
x_1^{q^h} + \cdots + \pi^hx_h + \pi^{h+1}x_{h+1}.
$$
Grouping terms we obtain
$$
\tF_h = \sum_{i=0}^{h-1} \pi^{i-h}\left( x_i^{q^{(h+1)-i}} - \tF_i^{q^{h-i}}
\right) + x_h^q + \pi x_{h+1}
$$
and we would be done if we can show integrality of the expression on the right 
hand side. Let $L_i = \pi^{i-h}\left(x_i^{q^{(h+1)-i}} - \tF_i^{q^{h-i}}
\right)$. By the induction hypothesis for each $i = 0,\dots , h-1$ we have 
$$
L_i = \pi^{i-h} \sum_{j=1}^{q^{h-i}} \left(\begin{array}{l} q^{h-i} \\j 
\end{array}\right) \pi^j (x_i^q)^{q^{h-i}-j}y_i^j.
$$
Then note that for each $i$ the $\pi$-valuation is
\beqar
\vpi(L_i) & \geq & i-h + \vpi(q^{h-i}) + (j -\vpi(j)) \\
&\geq & (h-i)\vpi(q) + 1 - (h-i) \mb{, since } j-\vpi(j) \geq 1 \\
& = & (h-i)(\vpi(q)-1) + 1 \\
& \geq & 1
\eeqar
and we are done.
\end{proof}

Consider the natural map $I:W_{\mn}(B) \map W_{m+n}(B)$ given by 
$$I(x_0,\dots, x_m,x_{m+1},\dots , x_{m+n}) =
(x_0,\dots, x_m,x_{m+1},\dots , x_{m+n})$$
where $x_0,\dots ,x_m \in R$.
Note that $I$ is an injection if $B$ is flat over $R$.
Also denote $I_w: \Pi_{\mn} (B) \map \Pi_{m+n}(B)$ the natural map on the 
ghost side.

\begin{proposition}
\label{comm}
Let $I: W_{\mn}(B) \map W_n(B)$ be the natural map. Then
$$
F^{m+2} \circ I = F^{m+1} \circ I \circ \tF.
$$
\end{proposition}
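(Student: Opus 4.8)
The plan is to verify the identity on ghost components, reducing as usual to the $\pi$-torsion-free case. Precisely, as in the proof of Theorem~\ref{Wittone}, all the maps involved ($I$, the ordinary Frobenius $F$, and the lateral Frobenius $\tilde{F}$) are functorial in $B$ and are given by universal formulas, so it suffices to prove the identity when $B$ is $\pi$-torsion free; in that case the ghost map of the target Witt ring is injective, so it is enough to check that the two composites become equal after composing with that ghost map. (A short length count, using that $\tilde{F}$ lands in $W_{[m]n-1}(B^{\phi})$ and that each ordinary $F$ drops one level, confirms that both composites have the same source and target.)

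The key inputs, all on the ghost side, are the following. For the natural map $I$ one has $w\circ I = I_{w}\circ w$, where $I_{w}$ is the identity on the coordinates $z_{0},\dots,z_{m+n}$ (it only forgets that the first $m+1$ of them are ghost components of $R$-entries); the analogous statement holds for the $I$ appearing on the right-hand side. For the ordinary Frobenius, $w_{j}(Fy)=w_{j+1}(y)$, as for the usual $\pi$-typical Witt vectors, so on ghost vectors $F$ acts by deleting the $0$th coordinate. For the lateral Frobenius, Theorem~\ref{Wittone} gives $w\circ\tilde{F}=F_{w}\circ w$ with $F_{w}\langle z_{0},\dots,z_{m+n}\rangle=\langle \phi(z_{0}),\dots,\phi(z_{m}),z_{m+2},\dots,z_{m+n}\rangle$.

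Granting these, the computation is immediate. Let $x\in W_{\mn}(B)$ have ghost vector $\langle z_{0},\dots,z_{m+n}\rangle$. On the left, $I$ does not change the ghost vector, and the $m+2$ successive applications of $F$ delete the first $m+2$ coordinates, leaving $\langle z_{m+2},\dots,z_{m+n}\rangle$. On the right, $\tilde{F}$ changes the ghost vector to $\langle \phi(z_{0}),\dots,\phi(z_{m}),z_{m+2},\dots,z_{m+n}\rangle$, then $I$ does nothing, and the $m+1$ successive applications of $F$ delete the first $m+1$ coordinates $\phi(z_{0}),\dots,\phi(z_{m})$, again leaving $\langle z_{m+2},\dots,z_{m+n}\rangle$. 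The two ghost vectors agree, hence so do the two maps.

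No step here is a genuine obstacle; the substance of the proposition is just the identity of the explicit ghost-side maps, $F_{w}^{\,m+2}\circ I_{w} = F_{w}^{\,m+1}\circ I_{w}\circ(F_{w})_{\mathrm{lat}}$, on the product rings. The one thing I would be careful about is the bookkeeping that makes this work: the $m+1$ ghost coordinates that $\tilde{F}$ twists by $\phi$ are exactly the ones that the extra factor of $F$ on the right-hand side discards, which is what offsets the fact that $\tilde{F}$ consumes one level more than a single ordinary Frobenius (it skips the coordinate $z_{m+1}$). I would also make sure the small values $n=0,1$ are either excluded or interpreted with the obvious conventions, since the iterated Frobenii require $n\ge 2$ to be literally defined.
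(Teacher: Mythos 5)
Your proposal is correct and follows essentially the same route as the paper: reduce to $\pi$-torsion-free $B$, use injectivity of the ghost map, and verify that both composites send $\langle z_0,\dots,z_{m+n}\rangle$ to $\langle z_{m+2},\dots,z_{m+n}\rangle$ on the ghost side. The bookkeeping remark about the $\phi$-twisted coordinates being exactly the ones discarded by the extra Frobenius matches the paper's computation.
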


\begin{proof}
It is sufficient to assume $B$ is $\pi$-torsion free. Then the ghost map $w$ 
is injective and hence it is enough to check the identity on the ghost
vectors. If $I_w: \Pi_{\mn}(B) \map \Pi_{m+n}(B)$ denote the natural map (which
is inclusion since $B$ is $\pi$-torsion free), then it is sufficient to show
that
$$
F^{m+2}_w \circ I_w = F_w^{m+1} \circ I_w \circ \tF_w.
$$
Now we have 
\beqar
(F_w^{m+1} \circ I_w \circ \tF_w)\langle z_0,\dots, z_{m+n}\rangle &=& 
(F_w^{m+1} \circ I_w)\langle \phi(z_0),\dots, \phi(z_m),z_{m+2},\dots , z_{m+n}
\rangle \\
&=& F_w^{m+1}\langle \phi(z_0),\dots, \phi(z_m), z_{m+2},\dots , z_{m+n}\rangle
\\
&=&\langle z_{m+2},\dots ,z_{m+n}\rangle 
\eeqar
On the other hand,
\beqar
F_w^{m+2}\circ I_w \langle z_0, \dots , z_{m+n}\rangle &=& F^{m+2}_w\langle
z_0,\dots , z_{m+n} \rangle \\
&=& \langle z_{m+2} ,\dots , z_{m+n} \rangle 
\eeqar
and this completes the proof. 
\end{proof}

Let the tuple $(X,P)$ denote the scheme
$X$
with a marked $R$-point $P \in X(R)$. Then by composing the universal map
\[
R \stk{\exp_\d}{\longrightarrow} W_n(R)
\]
with $P$, we naturally obtain an $R$-marked point $P_n$ on 
$J^nX$ which we will denote
by
$(J^nX,P_n)$. If $X = \Spec A$ and denote $i^*:A \map R$ be the
ring map associated to $P$. If $i^*_n:A \map W_n(R)$ 
denote the ring map of $P_n$ then we have
$i^*_n= \exp_\d \circ i^*$.
Then the composition
$w \circ i_n^*: A \map \Pi_n(B)$ is given by
\begin{align}
a \mapsto \langle i^*(a) , \phi(i^*(a)) ,\dots, \phi^n(i^*(a))
\rangle.
\end{align}

For all $n \geq 0$, consider $N^{\mn}X = J^{m+n}X \times_{J^mX} S$ which is 
the following fiber product
$$\xymatrix{
J^{m+n}X \ar[d] & N^{\mn}X \ar[l]_{i} \ar[d] \\
J^mX & S. \ar[l]_-{P_m} 
}$$
Then clearly $N^{\mn}X = \Spec N_{\mn}A$ where $N_{\mn}A = J_{m+n}A 
\otimes_{J_mA} R$. Also functorially $N^{\mn}X$ can be described as 
\begin{align}
\label{kerdef}
N^{\mn}X (B) &= \{g:A \map W_{\mn}(B) | \mb{ where if } g = (g_0,\dots ,g_m,
\dots , g_{m+n}), \nonumber \\
& \hspace{5cm} \mb{ then } (g_0,\dots, g_m) = P^*_m \}
\end{align}
Also note that the usual projection map $u:J^{m+n+1}X \map J^{m+n}X$ induces
$u:N^{[m]n+1}X \map N^{\mn}X$ for all $n \geq 1$.
Now we define the the generalized lateral Frobenius as follows: 
for any $R$-algebra, define $\fra:N^{\mn}X(B) \map N^{[m]n-1}X(B)$ as 
\begin{align}
\label{latFrob}
\fra(g) = \tF \circ g, \mb{ for all } g \in N^{\mn}X(B).
\end{align}

\begin{theorem}
\label{lateral}
For each $n$, the lateral Frobenius $\mfrak{f}:N^{\mn}X \map N^{[m]n-1}X$ 
is a lift of Frobenius and satisfies 
$$
\phi^{m+n} \circ i = \phi^{m+n-1} \circ i \circ \mfrak{f}
$$
for $n \geq 2$.
\end{theorem}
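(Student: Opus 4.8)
The plan is to verify the two claims separately: first that $\mfrak{f}$ is a lift of Frobenius on $N^{\mn}X$, and second that the displayed commutation identity holds for $n \geq 2$. For the first claim, I would unwind the definition \eqref{latFrob}: an element $g \in N^{\mn}X(B)$ is a ring map $g : A \map W_{\mn}(B)$ whose first $m+1$ ghost-free coordinates equal $P^*_m$, and $\mfrak{f}(g) = \tF \circ g$. By Theorem \ref{Wittone}, $\tF : W_{\mn}(B) \map W_{[m]n-1}(B^\phi)$ is a ring homomorphism, so $\tF \circ g$ is again a ring map. I need to check that $\tF \circ g$ lands in $N^{[m]n-1}X(B)$, i.e. that its first $m+1$ coordinates still recover the marked point; this follows because the formula for $\tF$ on those coordinates is $\tF_h = \phi(x_h)$-type data determined by $x_0,\dots,x_m \in R$, together with the fact that $\exp_\d$ and $w$ are compatible with $\phi$, so the marked-point condition is preserved. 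To see that $\mfrak{f}$ is a \emph{lift of Frobenius} in the sense of the Preliminaries, I would use the congruence $\tF_h \equiv x_h^q \bmod \pi$ from Theorem \ref{Wittone}, which says $\tF$ reduces mod $\mfrak{p}$ to the $q$-power map on the Witt components, hence the induced map on $N^{\mn}X$ reduces to Frobenius on the special fiber; equivalently, $\tF$ defines a $\pi$-derivation on the structure sheaf making $N^{[m]n-1}X \map N^{\mn}X$ a prolongation.

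For the second claim, the key input is Proposition \ref{comm}, which gives $F^{m+2} \circ I = F^{m+1} \circ I \circ \tF$ as maps of Witt vectors, where $I : W_{\mn}(B) \map W_{m+n}(B)$ is the natural inclusion-type map. I would translate this into a statement about the schemes $N^{\mn}X$ and $J^{m+n}X$. The closed immersion $i : N^{\mn}X \inj J^{m+n}X$ corresponds on $B$-points to composing a map $g : A \map W_{\mn}(B)$ with $I : W_{\mn}(B) \map W_{m+n}(B)$ to get $I \circ g : A \map W_{m+n}(B)$, i.e. a point of $J^{m+n}X(B)$. The lift of Frobenius $\phi_X = \phi^{?}$ on jet spaces acts by post-composition with the Witt-vector Frobenius $F$. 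So on $B$-points, $\phi^{m+n} \circ i$ sends $g$ to $F^{m+n} \circ I \circ g$ — but I should be careful here: the notation $\phi^{m+n}$ in the statement most plausibly abbreviates the appropriate composite of Frobenius-lowering maps $J^{m+n}X \map J^{m+n-1}X \map \cdots$, landing in a jet space of the right level, so I would want the exponent bookkeeping to match $F^{m+2}$ on the right side of Proposition \ref{comm} up to the shift by $m$ built into $N^{\mn}$. Concretely: $\phi^{m+n} \circ i$ corresponds to $g \mapsto F^{m+2} \circ I \circ g$ (after identifying the target jet level), while $\phi^{m+n-1} \circ i \circ \mfrak{f}$ corresponds to $g \mapsto F^{m+1} \circ I \circ (\tF \circ g) = (F^{m+1} \circ I \circ \tF) \circ g$, and Proposition \ref{comm} makes these equal. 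Since this holds functorially in $B$, the two scheme morphisms agree; the restriction $n \geq 2$ is exactly what is needed for $N^{[m]n-1}X$ and the intermediate kernel to be defined (one needs $n-1 \geq 1$ for $\mfrak{f}$ to have been defined, and the target $\phi^{m+n-1}$-level to make sense).

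The main obstacle I anticipate is purely bookkeeping: matching the indices so that the abstract Witt-vector identity of Proposition \ref{comm} — which mixes $F$, $I$, and $\tF$ at specific lengths — lines up with the geometric maps $\phi^{m+n}$, $i$, and $\mfrak{f}$ at the corresponding jet/kernel levels, and checking that the composites are even well-defined (that $\tF \circ g$ really does satisfy the marked-point constraint defining $N^{[m]n-1}X$, and that $I$ commutes with the marked points so $i$ is well-defined on the nose, not just up to the identification $N^{\mn}X = \Spec N_{\mn}A$ with $N_{\mn}A = J_{m+n}A \otimes_{J_mA} R$). Once the dictionary between Witt-vector operations and jet-space morphisms is set up cleanly — reducing to the $\pi$-torsion-free / ghost-vector case as in the proofs of Theorem \ref{Wittone} and Proposition \ref{comm} — both claims follow formally. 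A secondary point to be careful about: the lift-of-Frobenius assertion should be checked at the level of the $\pi$-derivation on structure sheaves (making $\{N^{\mn}X\}$ a prolongation sequence, consistent with Theorem \ref{yes}), not merely the mod-$\mfrak{p}$ congruence, though the congruence $\tF_h \equiv x_h^q \bmod \pi$ is the substantive content.
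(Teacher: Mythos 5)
Your proposal is correct and follows the paper's own route: the paper's proof consists precisely of citing Theorem \ref{Wittone} (the ring homomorphism property and the congruence $\tF_h \equiv x_h^q \bmod \pi$) for the lift-of-Frobenius claim and Proposition \ref{comm} for the compositional identity, with the functorial translation to $B$-points of $N^{\mn}X$ left implicit. You simply spell out the bookkeeping (marked-point preservation under $\tF$ and the dictionary between $F$, $I$, $\tF$ and $\phi$, $i$, $\mfrak{f}$) that the paper omits.
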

\begin{proof}
The lateral Frobenius $\fra$ is a lift of Frobenius with respect to $u$ follows
from Proposition \ref{Wittone} and the compositional identity is an immediate 
consequence of Proposition \ref{comm}.
\end{proof}

Let $G$ be a group scheme and $P:S \map G$ be the identity section. Then 
for each $n$, $N^{\mn}G$ naturally forms a group scheme. Also the projection
map $u:J^{\mn}G \map J^{[m]n-1}G$ induces the map (still denoted by $u$) $u:
N^{\mn}G \map N^{[m]n-1}G$ of group schemes.

\begin{theorem}
\label{latfrobgroup}
Let $(G,P)$ be as above. Then for each $n$, the lateral Frobenius
$\mfrak{f}:N^{\mn}G \map N^{[m]n-1}G$ is a morphism of group schemes.
\end{theorem}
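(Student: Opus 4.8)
The plan is to reduce the statement to functors of points and to the single fact that the lateral Frobenius is, concretely, the map induced on $G$-valued points by the \emph{ring} homomorphism $\tF\colon W_{\mn}(B)\map W_{[m]n-1}(B)$ of Theorem~\ref{Wittone}. First I would reduce to $G=\Spec A$ affine: the group law on $N^{\mn}G$ and the map $\mfrak{f}$ are both defined functorially, $N^{\mn}G$ depends only on an affine open neighbourhood $U$ of the identity section $P$ in $G$ (indeed $N^{\mn}G=J^{m+n}U\times_{J^mU}S=N^{\mn}U$), and the group-homomorphism identity $\mfrak{f}\circ\mu=\mu'\circ(\mfrak{f}\times\mfrak{f})$ may be checked on $B$-points for all $R$-algebras $B$. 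For such $B$, description~(\ref{kerdef}) identifies $N^{\mn}G(B)$ with the set of ring homomorphisms $g\colon A\map W_{\mn}(B)$ whose truncation $A\map W_{\mn}(B)\map W_{[m]0}(B)=W_m(R)$ equals the marked point $P^*_m$; since $P^*_m$ is the identity element of the group $G(W_m(R))$ (it is the identity section of the group scheme $J^mG$), this realizes $N^{\mn}G(B)$ as the kernel of the group homomorphism $G(W_{\mn}(B))\map G(W_m(R))$ induced by truncation, so its group law is the restriction of the functorial group law on $G(W_{\mn}(B))$.

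With these identifications, the lateral Frobenius $\mfrak{f}(g)=\tF\circ g$ of~(\ref{latFrob}) is exactly the restriction, to the kernels defining $N^{\mn}G$ and $N^{[m]n-1}G$, of the map $G(\tF)\colon G(W_{\mn}(B))\map G(W_{[m]n-1}(B))$ functorially induced by the ring homomorphism $\tF$; and Theorem~\ref{lateral} already guarantees that $\mfrak{f}$ is a morphism of schemes and that it does carry $N^{\mn}G$ into $N^{[m]n-1}G$. The conclusion is then immediate: because $G$ is a group scheme, the assignment $C\mapsto G(C)$ is a functor from $R$-algebras to groups, so it sends the ring homomorphism $\tF$ to a homomorphism of groups $G(\tF)$; the restriction of a group homomorphism to a subgroup is again a group homomorphism, and all the identifications above are natural in $B$. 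Hence $\mfrak{f}\colon N^{\mn}G\map N^{[m]n-1}G$ commutes with comultiplication, i.e.\ is a morphism of group schemes. (This is exactly parallel to the elementary reason why the usual Frobenius $\phi_G\colon J^{n+1}G\map J^nG$ is a group homomorphism: it is induced by the ring homomorphism $F\colon W_{n+1}(B)\map W_n(B)$.)

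The only genuine input is already in hand — that $\tF$ is a ring homomorphism (Theorem~\ref{Wittone}) and that $\mfrak{f}$ is a well-defined morphism of the generalized kernels (Theorem~\ref{lateral}) — so what remains is bookkeeping, and that is where a little care is needed. One must (i) match the group structure on $N^{\mn}G$ coming from the fibre-product definition $J^{m+n}G\times_{J^mG}S$ (phrased via $W_{m+n}$) with the one coming from~(\ref{kerdef}) (phrased via the subring $W_{\mn}(B)\subseteq W_{m+n}(B)$), using that $I$ is a ring map and that the kernel condition is equivalent to factoring through $I$; (ii) keep track of the Frobenius twist, since $\tF$ a priori lands in $W_{[m]n-1}(B^\phi)$, so that the target really is $N^{[m]n-1}G$ as an $S$-group scheme (the twist being absorbed exactly as for $\phi_G$); and (iii) justify the reduction from a general group scheme $G$ to the affine local model. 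I expect (i)--(ii) to be the only points requiring attention; none of them is deep, and the substance of the argument is simply that $\mfrak{f}$ is "$G$ evaluated at a ring homomorphism."
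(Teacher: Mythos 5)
Your argument is correct: since $\mfrak{f}$ is by definition (\ref{latFrob}) just composition with the ring homomorphism $\tF$ of Theorem \ref{Wittone}, and $N^{\mn}G(B)$ is the kernel of the group homomorphism $G(W_{\mn}(B))\map G(W_m(R))$ induced by truncation (with $P^*_m$ the identity element), functoriality of $G$ on ring homomorphisms makes $\mfrak{f}$ a group homomorphism on $B$-points, naturally in $B$. The paper actually states Theorem \ref{latfrobgroup} without any proof, and your functorial argument — including the reduction to an affine neighbourhood of the identity (which uses that jet spaces commute with open immersions, as in Lemma \ref{uinverse}) and the Frobenius-twist bookkeeping, handled exactly as for $\phi_G$ — is precisely the justification the paper leaves implicit.
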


\section{Centered Polynomials}\label{cp}

\cblue

For any $B$-algebra $C$ with $v:B \map C$ the structure map, given a lift of Frobenius $\fra:B \map C$ with respect to $v$, we say
$\Delta$ is an {\it associated $\pi$-derivation} to $\fra$ if for all 
$b \in B$ we have 
$$\fra(b)= v(b)^q + \pi \Delta b. $$

Let $R_* = R \stk{\d}{\rightarrow} R \stk{\d}{\rightarrow} \dots $ be the 
prolongation sequence
with the $\pi$-derivations at all levels to be the fixed $\pi$-derivation $\d$ 
on $R$.
Let $B_* = \{B_n\}_{n=0}^\infty$ be a prolongation sequence of $R$-algebras 
defined over $R_*$ where for all $n$, let
$u:B_n \map B_{n+1}$ be the $R$-algebra morphisms and $\Delta: B_n \map B_{n+1}$
is the $\pi$-derivation with respect to $u$.

In our particular case, for all $n$ let $B_n= R[\bt_0,\dots,\bt_n]$ be a
polynomial ring where $\bt_i$ for each $i$, denote the tuple of variables
$\bt_i = \{t_{1i},\dots, t_{di}\}$ for some $d$. For each $n$, let 
$v: B_n \map B_{n+1}$
be the natural inclusion map and suppose the system of $R$-modules 
$B_* = \{B_n\}_{n=0}^\infty$ is a prolongation sequence of $R$-algebras where
for every $n$, the $\pi$-derivation with respect to $v$ is denoted by 
$\Delta$. By $\Delta \bt_i$ we will understand
$\Delta t_{hi}$ for any chosen $h=1,\dots, d$.
Let $\fra$ denote the lift of Frobenius associated to the 
$\pi$-derivations $\Delta$. Hence for all $n$ we have
$$
\fra (b) = v(b)^q + \pi \Delta b
$$
for any $b \in B_n$.
For the $R$-algebra $B_0$, consider the canonical prolongation sequence 
$J_*(B_0) =\{ J_n(B_0)\}_{n=0}^\infty$. The $n$-th jet $R$-algebra $J_n(B_0)$
is given by 
\begin{align}
\label{JnB}
J_n(B_0) = R[\bt_0, \bt_0' ,\dots , \bt_0^{(n)}],
\end{align}
where for each $i =0, \dots, n$, $\bt_0^{(i)} = \{t_{10}^{(i)},\dots , 
t_{d0}^{(i)}\}$ is a $d$-tuple of indeterminates. For each $n$, the canonical
$R$-algebra map $u:J_n(B_0) \map J_{n+1}(B_0)$ is an inclusion induced by
$u(\bt_0^{(i)}) = \bt_0^{(i)}$ for all $i = 0, \dots, n$. For each $n$, 
the canonical $\pi$-derivation $\nd: J_n(B_0) \map J_{n+1}(B_0)$ is given by 
$\nd (\bt_0^{(i)}) = \bt_0^{(i+1)}$ for all $i = 0, \dots ,n.$
The associated lift of Frobenius $\Psi$ is given by
\begin{align}
\label{PsiFrob}
\Psi(a) = a^q +\pi \partial a,
\end{align}
for all $a \in J_n(B_0)$ and $n$.
Here we will like to remark the exception of making $\nd$ denote the canonical
$\pi$-derivation as opposed to $\d$ and $\Psi$ as the canonical lift of 
Frobenius instead of $\phi$. The reason for this is to avoid 
notational conflict in the subsequent subsection where the results of 
this one are applied to.

The universal property of the canonical prolongation sequence in (\ref{univ}) 
implies that we have
$$
\Hom_R(B_0,B_0) \simeq \Hom_{R_*}(J_*(B_0),B_*). 
$$
Hence the identity map in $\Hom_R(B_0,B_0)$ induces the following map of 
prolongation sequences $\mg_* : J_*(B_0) \map B_*$ which is a system of 
$R$-algebra homomorphisms given by
\begin{align}
\label{mgd}
\xymatrix{
& \\
J_n(B_0) \ar[r]^{\mg_n} \ar@{.>}[u]& B_n \ar@{.>}[u]\\
J_{n-1}(B_0) \ar[r]^{\mg_{n-1}} \ar[u]^{(u,\partial)} & B_{n-1} 
\ar[u]_{(v,\Delta)} \\
J_{1}(B_0) \ar[r]^{\mg_1} \ar@{.>}[u] & B_{1} \ar@{.>}[u] \\
B_0 \ar[r]^{\mg_0 = \mathbbm{1}} \ar[u]^{(u,\partial)} & B_{0}, 
\ar[u]_{(v,\Delta)} \\
}
\end{align}
satisfying
\begin{align*}
(i)~ g_n \circ u &= v \circ g_{n-1} \\
(ii)~ g_n \circ \partial &= \Delta \circ g_{n-1}
\end{align*}
for all $n$.
For the sake of brevity, if there is no possibility of confusion, for all
$n$, we will denote $\mg_n$ as $\mg$, there by, suppressing the subscripts. 

The above map in (\ref{mgd}) of prolongation sequences $\mg_*: J_*(B_0) \map 
B_*$ is induced by 
\begin{align}
\label{univg}
\mg(\bt_0^{(i)}) = \Delta^i \bt_0,
\end{align}
for all $i$ and satisfies 
$$
g \circ \partial = \Delta \circ g.
$$

\color{black}


\begin{lemma}
\label{coord1}
Let $B_*$ be as above. If the coordinate functions
$\bt_0,\dots , \bt_n$ satisfy
$$
\fra^n(\bt_0) =  \bt_0^{q^n}+ \pi \bt_1^{q^{n-1}} + \cdots + 
\pi^n \bt_n,
$$
for each $n$, then 
$$
\bt_n= \Delta \bt_{n-1} + \sum_{i=0}^{n-2}\left( \sum_{j=1}^{q^{n-1-i}} 
\pi^{i+j-n}\left(\begin{array}{l} q^{n-1-i} \\ j \end{array}\right) 
\bt_i^{q(q^{n-1-i}-j)}(\Delta \bt_i)^j \right)
$$
\end{lemma}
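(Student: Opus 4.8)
The plan is to prove this by induction on $n$, comparing the two expressions for $\Psi^n(\bt_0)$ that come, on the one hand, from the given hypothesis and, on the other hand, from applying the lift of Frobenius $\Psi$ to the hypothesis at level $n-1$. First I would record the two starting relations: the hypothesis gives
$$
\Psi^n(\bt_0) = \bt_0^{q^n} + \pi \bt_1^{q^{n-1}} + \cdots + \pi^n \bt_n,
$$
while writing $\Psi^n(\bt_0) = \Psi(\Psi^{n-1}(\bt_0))$ and using the hypothesis at level $n-1$ together with the fact that $\Psi$ is a ring homomorphism with $\Psi(\bt_i) = \bt_i^q + \pi\,\partial\bt_i$ gives
$$
\Psi^n(\bt_0) = \bigl(\bt_0^q + \pi\,\partial\bt_0\bigr)^{q^{n-1}} + \pi \bigl(\bt_1^q + \pi\,\partial\bt_1\bigr)^{q^{n-2}} + \cdots + \pi^{n-1}\bigl(\bt_{n-1}^q + \pi\,\partial\bt_{n-1}\bigr).
$$
Equating the two and cancelling the common leading terms $\bt_i^{q^{n-i}} = (\bt_i^q)^{q^{n-1-i}}$, I would solve for $\pi^n \bt_n$: the last binomial expansion contributes $\pi^{n-1}\cdot \pi\,\partial\bt_{n-1} = \pi^n\,\partial\bt_{n-1}$, and each earlier term $\pi^i(\bt_i^q + \pi\,\partial\bt_i)^{q^{n-1-i}}$ for $0 \le i \le n-2$ contributes, after removing its leading power, exactly $\pi^i \sum_{j=1}^{q^{n-1-i}} \binom{q^{n-1-i}}{j} \pi^j \bt_i^{q(q^{n-1-i}-j)} (\partial\bt_i)^j$. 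Dividing through by $\pi^n$ yields precisely the claimed formula.

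The second thing to settle is that this division by $\pi^n$ is legitimate, i.e. that the right-hand side is integral. Here I would either invoke that it suffices to work with $B_n$ $\pi$-torsion free (the rings $R[\bt_0,\dots,\bt_n]$ are $\pi$-torsion free when $R$ is, and one can reduce to that case as elsewhere in the paper), so that the displayed identity of ghost-type expressions forces the coefficient of $\pi^n$ on each side to match — meaning $\bt_n$ as defined by the prolongation sequence automatically equals the given expression — or, alternatively, give the direct valuation estimate exactly as in the proof of Theorem \ref{Wittone}: for the term indexed by $(i,j)$ one has $\pi$-valuation at least $i + j - n \ge$ (using $j \ge 1$ and, when $j$ is small, the $\pi$-valuation of $\binom{q^{n-1-i}}{j}$) a nonnegative quantity, by the same computation $(h-i)(v(q)-1)+1 \ge 1$ that appears there with $h = n-1$.

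The main obstacle I anticipate is purely bookkeeping: carefully matching the binomial-expansion terms of $\Psi$ applied to the level-$(n-1)$ identity against the level-$n$ identity, keeping the powers of $\pi$ and the exponents $q(q^{n-1-i}-j)$ straight, and making sure the index ranges ($0 \le i \le n-2$ inside the double sum, with the $i = n-1$ term appearing separately as $\partial\bt_{n-1}$) come out exactly as stated. Once the indexing is pinned down, the integrality is either immediate from $\pi$-torsion-freeness or a verbatim repeat of the estimate already in the paper, so no genuinely new idea is needed.
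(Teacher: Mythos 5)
Your argument is correct and is essentially the computation the paper itself defers to (Proposition 2.10 of \cite{bps}): apply $\Psi$ to the level-$(n-1)$ relation, expand $(\bt_i^q+\pi\partial\bt_i)^{q^{n-1-i}}$ binomially, cancel the leading terms against the level-$n$ relation, and divide by $\pi^n$, with integrality handled either by the same valuation estimate as in Theorem \ref{Wittone} or by reduction to the $\pi$-torsion-free case. Your implicit use of $\Psi(\pi)=\pi$ is consistent with the paper's own conventions (it is exactly what is used in the computation of $\partial(\pi^{m-1}\partial^{m-1}a)$ in Lemma \ref{phi-del}), so no gap remains.
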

\begin{proof}
Follows from a similar computation as in Proposition 2.10 in \cite{bps}.
\end{proof}

We will define $G \in R[T_1,\dots, T_k]$ to be {\it centered} if $G(0,\dots,
0) =0$.
Given $k$-elements $a_1,\dots, a_k \in B_n$ we define the
subset:
\beqar
[a_1,\dots,a_k] = \{a \in B_n~|~ a = F(a_1,\dots,a_k) \mb{ for 
some } F \in R[T_1,\dots,T_k] 
\mb { centered}\}.
\eeqar

\begin{lemma}
\label{sideal}
If $a, b \in [a_1,\dots, a_k]$ then
\begin{enumerate}
\item $a+b,~ ab \in [a_1,\dots, a_k]$

\item $\Delta a \in [a_1, \dots, a_k , \Delta a_1,\dots, \Delta a_k]$.

\item $[a_1,\dots, a_k] \subset (a_1,\dots, a_k)$.

\item If $i^*: B_n \map R[\by,\dots, \by^{(m)}]$ be a
map of $R$-algebras, if $a \in [a_1,\dots, a_k]$ then 
$$
i^* a =[i^*a_1,\dots, i^*a_k].
$$

\item If $a-b \in [a_1,\dots , a_k]$ then $\Delta a - 
\Delta b \in [b,a_1,\dots, a_k, \Delta a_1, \dots \Delta a_k]$
\end{enumerate}
\end{lemma}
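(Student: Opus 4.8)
The plan is to prove each of the five assertions of Lemma \ref{sideal} essentially by unwinding the definition of $[a_1,\dots,a_k]$ as the set of values of centered polynomials, together with the two defining identities $(i)$ and $(ii)$ of a $\pi$-derivation. Throughout, I write elements of $[a_1,\dots,a_k]$ as $a=F(a_1,\dots,a_k)$ and $b=G(a_1,\dots,a_k)$ with $F,G\in R[T_1,\dots,T_k]$ centered, i.e. $F(0)=G(0)=0$.

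For (1): the polynomial $F+G$ is centered (value $0$ at the origin) and $(F+G)(a_1,\dots,a_k)=a+b$, so $a+b\in[a_1,\dots,a_k]$; likewise $FG$ is centered and realizes $ab$. For (3): a centered polynomial $F$ has no constant term, hence $F\in(T_1,\dots,T_k)$ as an ideal of $R[T_1,\dots,T_k]$, and applying the substitution homomorphism $T_i\mapsto a_i$ gives $F(a_1,\dots,a_k)\in(a_1,\dots,a_k)$. For (4): $i^*$ is a ring homomorphism fixing $R$, so $i^*(F(a_1,\dots,a_k))=F(i^*a_1,\dots,i^*a_k)$, which exhibits $i^*a$ as the value of the same centered polynomial $F$ on the images $i^*a_1,\dots,i^*a_k$; hence $i^*a\in[i^*a_1,\dots,i^*a_k]$. (Here I read the displayed $=$ in the statement as membership $\in$, matching the definition.)

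The only genuine content is (2), and (5) is a routine consequence of it. For (2) I proceed by induction on the structure of the centered polynomial $F$ using the multiplicative and additive rules for $\partial$. Concretely: any centered $F$ is an $R$-linear combination of monomials $T_{i_1}\cdots T_{i_r}$ with $r\geq 1$ (coefficients in $R$ may be absorbed since $\partial(c\cdot) $ for $c\in R$ with $\partial c\in R$ keeps us inside the same generated set — more carefully, one uses $\partial(cx)=\phi(c)\partial x+x^q\partial c$ together with $\partial c\in R$, and the fact that a value of a centered polynomial times an element of $R$, plus $R$-multiples, is again such a value). The additive rule $(i)$, $\partial(x+y)=\partial x+\partial y+C_\pi(x,y)$, introduces the term $C_\pi(x,y)$, which is itself a polynomial with $R$-coefficients vanishing at the origin in the two arguments $x,y$; so applied iteratively it produces new terms lying in $[a_1,\dots,a_k,\partial a_1,\dots,\partial a_k]$ once $x,y$ already lie in $[a_1,\dots,a_k]$ (here one uses part (1) to combine). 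The multiplicative rule $(ii)$, $\partial(xy)=x^q\partial y+y^q\partial x+\pi\,\partial x\,\partial y$, shows that if $x,y\in[a_1,\dots,a_k]$ and $\partial x,\partial y\in[a_1,\dots,a_k,\partial a_1,\dots,\partial a_k]$, then $\partial(xy)$ is again a centered-polynomial expression in the $a_i$ and $\partial a_i$: each summand is a product of such values (using part (1) again for closure under products and sums). The base case is $\partial a_i\in[a_1,\dots,a_k,\partial a_1,\dots,\partial a_k]$, which is immediate since $\partial a_i$ is the value of the centered polynomial $T_{k+i}$. Assembling these, an induction on the number of monomials and on monomial degree gives $\partial a\in[a_1,\dots,a_k,\partial a_1,\dots,\partial a_k]$ for every $a\in[a_1,\dots,a_k]$.

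Finally, (5): if $a-b\in[a_1,\dots,a_k]$, apply (2) to $c:=a-b$ to get $\partial c\in[a_1,\dots,a_k,\partial a_1,\dots,\partial a_k]$; then from the additive rule, $\partial a=\partial(b+c)=\partial b+\partial c+C_\pi(b,c)$, so $\partial a-\partial b=\partial c+C_\pi(b,c)$, and $C_\pi(b,c)$ is a centered polynomial expression in $b$ and $c$, hence (using (1) and $c\in(a_1,\dots,a_k)$-type bookkeeping, or more simply expressing $c$ itself via the $a_i$) lies in $[b,a_1,\dots,a_k,\partial a_1,\dots,\partial a_k]$; combining gives the claim. The main obstacle is purely organizational in (2): keeping track that the scalar coefficients from $R$ and the correction polynomials $C_\pi$ never force one outside the set generated by the $a_i$ and $\partial a_i$; this is handled by repeatedly invoking part (1) (closure under sums and products) so that the induction goes through cleanly.
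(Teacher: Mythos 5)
Your proposal is correct and follows essentially the same route as the paper: parts (1), (3), (4) by direct unwinding of the definition, part (2) by decomposing into centered monomials and inducting with the product rule $\partial(xy)=x^q\partial y+y^q\partial x+\pi\,\partial x\,\partial y$ (your scalar bookkeeping via $\partial(cx)=\phi(c)\partial x+x^q\partial c$ matches the paper's claim for monomials $ca_{i_1}\cdots a_{i_l}$), and part (5) by applying (2) to $a-b$ and absorbing the centered $C_\pi$ correction into $[b,a_1,\dots,a_k,\partial a_1,\dots,\partial a_k]$. Your write-up of (5) via $\partial a=\partial(b+c)$ with $c=a-b$ is in fact a slightly cleaner arrangement of the same argument.
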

\begin{proof}
$(1)$ Let $a = F(a_1,\dots, a_k)$ and $b=G(a_1,\dots, a_k)$ where $F$ and $G$
are centered. Then clearly $a+b= (F+G)(a_1,\dots, a_k)$ and $ab=FG (a_1,\dots,
a_k)$ where $F+G$ and $FG$ are both centered and we are done.

$(2)$ Let $a= F_1(a_1,\dots, a_k) + \cdots + F_n(a_1,\dots, a_k)$ where 
$F_i$s are centered monomials. Then 
$$\Delta a = \Delta F_1 + \cdots + \Delta F_n + C_\pi(F_1,\dots, F_n).$$
where $C_\pi$ is centered.

\underline{Claim}: If $F= ca_1\dots a_l$, $c \in R$ and $a_i$s need not be 
distinct, then 
$$
\Delta F  = L(a_1,\dots, a_l, \Delta a_1, \dots, \Delta a_l)
$$
for some $L$ centered.

{\underline{\it Proof of claim}:} We will prove this by induction on $l$. 
For $l=1$ it is clear.
Now assume true for $l-1$ and let $\Delta (ca_1\dots a_l) = 
L(a_1\dots a_{l-1},
\Delta a_1, \dots, \Delta a_{l-1})$ with $L$ centered. Then
\beqar
\Delta F &=& \Delta (ca_1,\dots, a_l) \\
&=& \Delta((ca_1,\dots, a_{l-1})a_l) \\
&=& (ca_1,\dots, a_{k-1})^q \Delta a_l + a_l^q\Delta
 (c a_1,\dots, a_{l-1}) + \pi\Delta a_l \Delta (ca_1\dots a_{l-1}) \\
&=& (ca_1\dots a_{l-1})^q \Delta a_l + a_l^q L(a_1,\dots, a_{l-1},\Delta
 a_1,\dots, \Delta a_{l-1})\\
& & \hspace{1cm} + \pi \Delta a_l L(a_1,\dots , a_{l-1}, \Delta a_1,\dots , \Delta a_{l-1}) 
\\
& \in& [a_1, \dots, a_l,\Delta a_1,\dots, \Delta a_l]
\eeqar
and this proves the claim.
Hence to complete the proof of (2), we have $\Delta 
a \in [a_1,\dots, a_k,\Delta a_1, \dots, \Delta a_k]$ 
since $C_\pi$ is a centered polynomial.

$(3)$ Clear from the definition.

$(4)$ Let $a \in [a_1,\dots , a_k]$. Then $a = H(a_1,\dots , a_k)$ where 
$H$ is centered. Then $i^*a = H(i^*a_1,\dots , i^*a_k)$ and we are done.

$(5)$ Since $a-b \in [a_1,\dots a_k]$ implies that $a \in [b,a_1,\dots, a_k]$.
Then we have
\beqar
\Delta (a-b)  &\in & [a_1,\dots, a_k,\Delta a_1,\dots , \Delta a_k] \\
\Delta a - \Delta b + C_\pi (a, -b) &\in & [a_1,\dots, a_k,\Delta 
a_1,\dots , \Delta a_k] \\
\eeqar
and the result follows since $a \in [b,a_1,\dots, a_k]$.
\end{proof}

\begin{lemma}
\label{phi-del}
For all $m \geq 1$ we have 
$$
\fra^{m}(a) = \pi^{m}\Delta^{m}(a) + P_{m-1}(a,\Delta a, \dots, 
\Delta^{m-1}a).
$$
where $P_{m-1}$ is centered.
\end{lemma}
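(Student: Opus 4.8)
The plan is to prove the identity $\Psi^m(a) = \pi^m \partial^m(a) + P_{m-1}(a,\partial a,\dots,\partial^{m-1}a)$ by induction on $m$, where at each stage $P_{m-1}$ is a centered polynomial in the indicated arguments. The base case $m=1$ is just the definition of an associated $\pi$-derivation: $\Psi(a) = u(a)^q + \pi\partial a$, and $u(a)^q$ is a centered polynomial in $a$ (since $0^q = 0$), so we may take $P_0(a) = a^q$. For the inductive step, assume $\Psi^m(a) = \pi^m\partial^m a + P_{m-1}(a,\partial a,\dots,\partial^{m-1}a)$ with $P_{m-1}$ centered. Apply $\Psi$ to both sides. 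Since $\Psi$ is a ring homomorphism, $\Psi^{m+1}(a) = \Psi(\pi^m \partial^m a) + \Psi(P_{m-1}(a,\partial a,\dots,\partial^{m-1}a)) = \pi^m \Psi(\partial^m a) + P_{m-1}(\Psi(a),\Psi(\partial a),\dots,\Psi(\partial^{m-1}a))$, using that $\Psi$ fixes $\pi \in R$ and commutes with evaluation of polynomials with $R$-coefficients.

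First I would handle the term $\pi^m \Psi(\partial^m a)$: by the defining property of $\Psi$, $\Psi(\partial^m a) = u(\partial^m a)^q + \pi\,\partial(\partial^m a) = (\partial^m a)^q + \pi\,\partial^{m+1}a$, so $\pi^m\Psi(\partial^m a) = \pi^{m+1}\partial^{m+1}a + \pi^m(\partial^m a)^q$. The leading term $\pi^{m+1}\partial^{m+1}a$ is exactly what we want; the correction $\pi^m(\partial^m a)^q$ is a centered polynomial in $\partial^m a$ alone, hence in $(a,\partial a,\dots,\partial^m a)$. Next I would handle $P_{m-1}(\Psi(a),\Psi(\partial a),\dots,\Psi(\partial^{m-1}a))$: for each $0\le j\le m-1$, write $\Psi(\partial^j a) = (\partial^j a)^q + \pi\,\partial^{j+1}a$, which is a centered polynomial in $(\partial^j a, \partial^{j+1}a)$, hence in $(a,\partial a,\dots,\partial^m a)$. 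Substituting these into $P_{m-1}$ and using that a centered polynomial evaluated at centered polynomials is again centered (the substitution sends $0\mapsto 0$; compare Lemma~\ref{sideal}(1),(4)), we get a centered polynomial in $(a,\partial a,\dots,\partial^m a)$.

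Finally, collecting: $\Psi^{m+1}(a) = \pi^{m+1}\partial^{m+1}a + \big[\pi^m(\partial^m a)^q + P_{m-1}(\Psi(a),\dots,\Psi(\partial^{m-1}a))\big]$, and the bracketed expression is a sum of two centered polynomials in $(a,\partial a,\dots,\partial^m a)$, hence centered, so we set $P_m$ equal to it and the induction closes. The only point requiring any care — and what I'd regard as the main (minor) obstacle — is making precise and recording once the elementary closure fact that the class of centered polynomials is closed under sums, products, and composition/substitution; this is essentially contained in the statements of Lemma~\ref{sideal}, so I would simply cite it rather than re-derive it. No integrality issues arise here (unlike in Theorem~\ref{Wittone}), since everything takes place in the single ring, so the argument is purely formal once the bookkeeping of which variables appear is set up.
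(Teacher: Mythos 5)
Your proof is correct and follows the same inductive skeleton as the paper's; the differences are only in how the two sub-terms are processed, plus one imprecision worth fixing. For the leading term the paper writes $\Psi(\pi^{m-1}\partial^{m-1}a)=(\pi^{m-1}\partial^{m-1}a)^q+\pi\,\partial(\pi^{m-1}\partial^{m-1}a)$ and expands $\partial$ of the product, computing $\partial(\pi^{m-1})$ explicitly, whereas you pull $\pi^m$ through $\Psi$ as a scalar; the two computations agree, and both tacitly use $\Psi(\pi)=\pi$ (the paper assumes this implicitly when it replaces $\phi(\pi^{m-1})$ by $\pi^{m-1}$), an assumption the lemma's statement in fact requires, so you are not worse off than the paper here. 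For the remainder term the paper writes $\Psi(P_{m-2})=P_{m-2}^q+\pi\,\partial P_{m-2}$ and quotes Lemma~\ref{sideal}(2), while you substitute $\Psi(\partial^j a)=(\partial^j a)^q+\pi\,\partial^{j+1}a$ into the polynomial and use closure of centered polynomials under composition; both are valid. The one claim you should not make literally is that $\Psi$ ``commutes with evaluation of polynomials with $R$-coefficients'': $\Psi$ restricts on $R$ to the lift of Frobenius $\phi$, not the identity, so what you actually get is $\Psi\bigl(P_{m-1}(a,\dots,\partial^{m-1}a)\bigr)=P_{m-1}^{\phi}\bigl(\Psi a,\dots,\Psi\partial^{m-1}a\bigr)$ with $\phi$-twisted coefficients; since twisting coefficients preserves centeredness this does not affect your argument, but the twist should be recorded (the paper's route via $P^q+\pi\,\partial P$ and Lemma~\ref{sideal}(2) sidesteps the coefficient issue entirely).
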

\begin{proof}

For $m=1$ we have $\fra(a) = a^q +\pi \Delta a$ for all $a$ and hence we 
are done. We will prove  using induction. Let the result be true
for $m-1$. Therefore we have
$$
\fra^{m-1}(a) = \pi^{m-1}\Delta^{m-1}a + P_{m-2}(a,\dots, \Delta^{m-2}a)
$$
where $P_{m-2}$ is centered. Then we have
\beqar
\fra^m(a) &=& \fra(\fra^{m-1}(a)) \\
&=& \fra(\pi^{m-1}\Delta^{m-1}a + P_{m-2}(a,\dots, \Delta^{m-2} a))\\
&=& \fra(\pi^{m-1}\Delta^{m-1}a) + \fra(P_{m-2}(a,\dots, \Delta^{m-2}a))\\
&=& (\pi^{m-1}\Delta^{m-1}a)^q + \pi\Delta(\pi^{m-1}\Delta^{m-1}a)
+ (P_{m-2}^q + \pi \Delta P_{m-2})
\eeqar
Then $\pi\Delta P_{m-2} \in [a,\Delta a, \dots \Delta^{m-1} a]$ by 
Lemma \ref{sideal} (2). Hence 
$$(\pi^{m-1}\Delta^{m-1}a)^q + P^q_{m-2} + \pi\Delta P_{m-2} \in
[a,\Delta a, \dots , \Delta^{m-1}a].
$$
Now for the other term
\beqar
\Delta(\pi^{m-1}\Delta^{m-1}a) &=& \phi(\pi^{m-1}) \Delta^m a + 
(\Delta^{m-1}a)^q \Delta(\pi^{m-1})\\
&=& \pi^{m-1}\Delta^m a+ \pi^{m-2}(1-\pi^{(m-1)(q-1)})(\Delta^{m-1}a)^q
\eeqar
Hence substituting the above in the equation for $\fra^m(a)$ we get
$$
\fra^m(a) = \pi^m\Delta^m a+ P_{m-1}
$$
where 
$$
P_{m-1} = (\pi^{m-1}\Delta^{m-1}a)^q + P^q_{m-2} + 
\pi\Delta P_{m-2} + \pi^{m-1}(1-\pi^{(m-1)(q-1)})(\Delta^{m-1}a)^q
$$
which satisfies $P_{m-1} \in [a,\Delta a, \dots ,\Delta^{m-1}a]$ and 
we are done.
\end{proof}

\begin{proposition}
\label{coord2}
\label{coord3}
Let $B_*$ be as above. If $\bt_0,\dots , \bt_m$ satisfies
$$
 \fra^m(\bt_0) = \bt_0^{q^m}+ \pi \bt_1^{q^{m-1}} + \cdots + 
\pi^m \bt_m
$$
for all $0\leq m \leq n$,
then we have
\begin{enumerate}
\item
$\bt_m -\Delta^m\bt_0 \in [\bt_0,\dots, \Delta^{m-1}\bt_0]=
[\bt_0,\bt_1, \dots, \bt_{m-1}]. $
\item $B_m \simeq R[\bt_0,\dots , \Delta^m \bt_0]$.
\end{enumerate}

\end{proposition}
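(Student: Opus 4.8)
The plan is to prove the two assertions in sequence, with (1) as the technical heart and (2) as a formal consequence. For part (1), I would argue by induction on $m$. The base case $m=0$ is vacuous (the set $[\,]$ of centered polynomials in no variables contains only $0$, and $\bt_0-\partial^0\bt_0=0$). For the inductive step, suppose $\bt_j-\partial^j\bt_0\in[\bt_0,\dots,\bt_{j-1}]$ for all $j<m$; in particular, repeatedly applying Lemma \ref{sideal}(1),(3) gives that the ideals $(\bt_0,\dots,\bt_{m-1})$ and $(\bt_0,\partial\bt_0,\dots,\partial^{m-1}\bt_0)$ coincide, which is the second equality claimed in (1). To get the first containment, I would start from the explicit formula for $\bt_m$ in Lemma \ref{coord1},
$$
\bt_m = \partial\bt_{m-1} + \sum_{i=0}^{m-2}\sum_{j=1}^{q^{m-1-i}}
\pi^{i+j-m}\binom{q^{m-1-i}}{j}\bt_i^{q(q^{m-1-i}-j)}(\partial\bt_i)^j,
$$
and compare it with $\partial^m\bt_0=\partial(\partial^{m-1}\bt_0)$. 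By the inductive hypothesis $\bt_{m-1}-\partial^{m-1}\bt_0\in[\bt_0,\dots,\bt_{m-2}]$, so Lemma \ref{sideal}(5) (with $a=\bt_{m-1}$, $b=\partial^{m-1}\bt_0$) yields
$$
\partial\bt_{m-1}-\partial^m\bt_0 \in [\partial^{m-1}\bt_0,\bt_0,\dots,\bt_{m-2},\partial\bt_0,\dots,\partial\bt_{m-2}],
$$
and the right-hand bracket is contained in $[\bt_0,\dots,\bt_{m-1}]$ once one checks, again via Lemma \ref{sideal} and the inductive hypothesis, that each $\partial\bt_i$ and each $\partial^{m-1}\bt_0$ lies in it. It then remains to observe that every term in the double sum lies in $[\bt_0,\dots,\bt_{m-2}]\subseteq[\bt_0,\dots,\bt_{m-1}]$: each is a polynomial with no constant term in the $\bt_i$ and $\partial\bt_i$ ($i\le m-2$), the $\pi$-adic integrality of the coefficients $\pi^{i+j-m}\binom{q^{m-1-i}}{j}$ being exactly the estimate already carried out in the proof of Theorem \ref{Wittone}, and $\partial\bt_i\in[\bt_0,\dots,\bt_i]$ by Lemma \ref{sideal}(2) together with the case $m=i+1$ of the induction. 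Adding up, $\bt_m-\partial^m\bt_0\in[\bt_0,\dots,\bt_{m-1}]$.

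For part (2), I would use (1) to build the isomorphism explicitly. The canonical jet algebra $J_mB_0$ is the polynomial ring $R[\bt_0,\partial\bt_0,\dots,\partial^m\bt_0]$ on the symbols $\bt_0,\partial\bt_0,\dots,\partial^m\bt_0$ (freely, by the universal property of jet spaces recalled in Section \ref{pre}, since $B_0=R[\bt_0]$ is a polynomial ring), equipped with its tautological $\pi$-derivation. On the other hand $B_m=R[\bt_0,\dots,\bt_m]$ is also a polynomial ring in $m+1$ (tuples of) variables. Part (1) says that the change of coordinates $\bt_i\mapsto\partial^i\bt_0$ is triangular: $\partial^i\bt_0=\bt_i+(\text{polynomial in }\bt_0,\dots,\bt_{i-1}\text{ with no constant term})$. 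Such a triangular substitution is invertible over $R$, so it defines an $R$-algebra isomorphism $B_m\xrightarrow{\sim}R[\bt_0,\partial\bt_0,\dots,\partial^m\bt_0]=J_mB_0$; and by construction it is compatible with the $\pi$-derivations (it sends the $\partial$ of $B_*$ to the tautological $\partial$ on $J_mB_0$), so it is an isomorphism of prolongation sequences.

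The main obstacle I anticipate is bookkeeping in part (1): keeping careful track of which variables appear in the bracket sets as one applies Lemma \ref{sideal}(2) and (5), and making sure the inductive hypothesis is invoked in a well-founded order (one needs $\partial\bt_i\in[\bt_0,\dots,\bt_i]$, i.e.\ the statement "$\bt_{i+1}-\partial^{i+1}\bt_0\in[\bt_0,\dots,\bt_i]$" for all $i\le m-2$, before concluding the $m$-th case — so the induction should really be phrased as "for all $m'\le m$ simultaneously", or one proves the auxiliary fact $\partial^i\bt_0\in[\bt_0,\dots,\bt_i]$ in tandem). The $\pi$-integrality of the binomial coefficients is not a genuine difficulty since it is already established inside the proof of Theorem \ref{Wittone}; I would simply cite that estimate rather than repeat it.
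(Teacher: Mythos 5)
Your proposal is correct and follows essentially the same route as the paper's proof: induction on $m$, with Lemma \ref{sideal}(5) giving $\partial\bt_{m-1}-\partial^m\bt_0\in[\bt_0,\dots,\bt_{m-1}]$, Lemma \ref{coord1} giving $\bt_m-\partial\bt_{m-1}\in[\bt_0,\dots,\bt_{m-1}]$, and part (2) obtained as the immediate triangular change of coordinates $B_m\simeq R[\bt_0,\partial\bt_0,\dots,\partial^m\bt_0]$. The only blemish is an index slip in your auxiliary claim: $\partial\bt_i$ lies in $[\bt_0,\dots,\bt_{i+1}]$, not $[\bt_0,\dots,\bt_i]$ (already $\partial\bt_0=\bt_1$), but since you only use it for $i\leq m-2$ the needed containment in $[\bt_0,\dots,\bt_{m-1}]$ still holds and the argument is unaffected.
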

\begin{proof}
We will prove this by induction on $m$. For $m=0$ then the result is clear. Now 
assume the result is true for $m-1$. Then for all $0\leq i \leq m-1$ we have
$$
\bt_i-\Delta^i \bt_0 \in [\bt_0,\dots, \Delta^{i-1}\bt_0] =[\bt_0,\bt_1,
\dots, \bt_{i-1}].
$$
Now by induction hypothesis we have 
$$
\bt_{m-1}-\Delta^{m-1}\bt_0 \in [\bt_0,\dots, \Delta^{m-2}\bt_0] = [\bt_0,
\dots, \bt_{m-2}]
$$
Hence by Lemma \ref{sideal} (5) we have
\begin{align}
\label{cat}
\Delta \bt_{m-1} - \Delta^m \bt_0 \in [\bt_0,\dots , \Delta^{m-2}\bt_0, 
\Delta^{m-1} \bt_0] = [\bt_0,
\bt_1, \dots , \bt_{m-1}]
\end{align}
Then by Lemma \ref{coord1} along with the induction hypothesis we have 
\begin{align}
\label{cat2}
\bt_m - \Delta \bt_{m-1} \in [\bt_0,\dots, \Delta^{m-1} \bt_0] = 
[\bt_0,\dots , \bt_{m-1}]
\end{align}
Hence combining (\ref{cat}) and (\ref{cat2}) we get
$$
\bt_m - \Delta^m \bt_0 \in [\bt_0,\dots, \Delta^{m-1}\bt_0] = 
[\bt_0,\dots , \bt_{m-1}]
$$
and we are done.

$(2)$ From $(1)$ we get $B_m \simeq R[\bt_0,\dots ,\Delta^m \bt_0]$ and 
this completes the proof.
\end{proof}

\cblue

\begin{theorem}
\label{univthm}
Assume $B_*$ satisfy the conditions as in Lemma \ref{coord1}.
Then the morphism $\mg_*:J_*(B_0) \map B_*$ given in (\ref{mgd}) is an 
isomorphism in the category
of prolongation sequences over $R_*$.
\end{theorem}
\begin{proof}
It is sufficient to show that in diagram (\ref{mgd}), $g: J_n(B_0) \map B_n$
is an isomorphism of $R$-algebras for all $n$. By equation (\ref{JnB}) we
have $J_n(B_0) = R[\bt_0,\dots, \bt^{(n)}_0]$. On the other hand, by 
Proposition \ref{coord3} we get $B_m \simeq R[\bt_0,\dots , \Delta^m\bt_0]$.
Then the result follows from the fact that the $R$-algebra map $g$ in 
equation (\ref{univg}) is 
given by $g(\bt_0^{(i)}) = \Delta^i \bt_0$ for all $i=0,\dots ,n$.
\end{proof}

\color{black}

\section{Kernel of Jet spaces}\label{kj}

As before we will consider the category of schemes over $S= \Spec R$
where $R$ is a ring with a fixed $\pi$-derivation $\d$ on it.
Let $ X= \Spec B$ such that $B$ is an $R$-algebra with presentation $B = R[\bx]/
I$ where $\bx$ is a collection of indeterminates and $I$ is an ideal inside 
$R[\bx]$.
Consider the representing scheme $J^nX = \Spec J_nB$ where 
$$J_nB =
 \frac{R[\bx, \bx', \cdots , \bx^{(n)}]}{(I, \d I,  
\cdots, \d^n I )}\simeq \frac{R[\bx, \bx_1, \cdots, \bx_n]}{(I, 
P_1(I), \cdots, P_n(I))}. $$ 
and the above isomorphism is as in \cite{bps}. Define 
\begin{align}
\label{Nn}
\Nn X := J^{m+n}X \times_{J^m X} P_m
\end{align}
Then 
\[
\Nn X = \Spec~ N_{\mn}B
\]
where $N_{\mn}B= J_{m+n}B \otimes_{J_mB} R$ obtained from the following
base-change 
$$\xymatrix{
J_{m+n}B \ar[r]^{i^*_m} & N_{\mn}B \\
J_mB \ar[r]^{i^*_m} \ar[u] & R. \ar[u] 
}$$


\subsection{The Affine $n$-plane case}

Let $X= \Spec A$ where $A = R[\bx]$ and $\bx$ is as before. We will do
explicit
computation based on the chosen coordinate function $\bx$. Now for any $n$ we
have $J^nX = \Spec J_nA$ where $J_nA = R[\bx,\bx_1,\bx_2,\dots , \bx_n]
\simeq R[\bx,\bx',\dots, \bx^{(n)}]$ where on $J^nX$ we have 
$(\bx,\bx_1,\dots )$ to be the induced Witt coordinates and $(\bx, \bx',
\dots )$ are the $\d$-coordinates. Then an $R$-point
of $X$ corresponds to an $R$-algebra map $i^*: A \map R$ and the induced point
$i^*_m:J_mA \map R$ correspond to the point $P_m: \Spec R \map J^mX$. 
Then we have
\beqar
N_{\mn}A &=& J_{m+n}A \otimes_{J_mA} R\\
&=& R[\bx,\bx',\dots, \bx^{(m+n)}] \otimes_{R[\bx,\dots, \bx^{(m)}]} R \\
&\simeq & R[\bx^{(m+1)},\dots , \bx^{(m+n)}] 
\eeqar
satisfying
$$\xymatrix{
J_{m+n}A \ar[r]^{i^*_m} & N_{\mn}A \\
J_mA \ar[u] \ar[r]_{i^*_m} & R \ar[u]
}$$
where 
$$i^*_m(\bx^{(j)}) = \left\{\begin{array}{ll} i^*_m(\bx^{(j)}) & \mb{ if } 
j \leq m \\ 
\bx^{(j)} & \mb{ if } j \geq m+1. \end{array} \right.$$
For a fixed $m$, consider the lateral Frobenius $\mfrak{f}:N_{\mn}A \map 
N_{[m]n+1} A$ for $n \geq 0$ as in Theorem \ref{lateral}.
The above theorem also implies 
\[
\mfrak{f} i^* \phi^{m+1} = i^* \phi^{m+2}.
\]
Let $\Delta$ be the unique $\pi$-derivation associated to $\mfrak{f}$, that is 
$\mfrak{f}(a) = a^q + \pi \Delta (a)$.
\cblue
Hence $N_{[m]*}A := \{N_{[m]n}A\}_{n=0}^\infty$ with the $\pi$-derivation
$\Delta :N_{[m]n}A \map N_{[m]n+1}A$ for all $n$ is a prolongation sequence
over $R_*$.
\color{black}
Now the adjunction property proved in
Theorem 1.3 of \cite{bps} implies
\begin{align}
\label{adjunction}
J^nX(B) \simeq \Hom_R(J_nA, B) \simeq \Hom_R(A,W_n(B))
\end{align}
for all $R$-algebra $B$. In this case when $J_nA=R[\bx,\bx_1,\dots, \bx_n]$,
giving an $R$-algebra homomorphism from $J_nA$ to $C$ is equivalent to 
specifying the image of the generators $\{\bx,\dots, \bx_n\}$ in $C$.
Hence with respect to the Witt coordinates, the above 
isomorphism in (\ref{adjunction}) gives the following correspondence:
$$
\left(\{\bx_0,\bx_1,\dots, \bx_n\} \mapsto \{\bo_0,\bo_1,\dots, \bo_n\}\right) 
\longleftrightarrow \left(\bx \mapsto (\bo_0,\dots, \bo_n) \right)
$$
where $\bo_i \in B$ for all $i$. Then $N^{\mn}X$ is a closed subfunctor of 
$J^{m+n}X$ satisfying
$$
\xymatrix{
J^{m+n}X(B) \ar@{=}[r] & \Hom_R(J_{m+n}A,B) \ar@{=}[r] & \Hom_R(A,W_{m+n}(B)) \\
N^{\mn}X(B) \ar@{^{(}->}[u] \ar@{=}[r] & \Hom_R(N_{\mn}A,B) \ar@{^{(}->}[u] 
\ar@{=}[r] & \nHom_R(A,W_{\mn}(B)) \ar@{^{(}->}[u]
}
$$
where $\nHom_R(A,W_{\mn}(B))$ consists of elements $g\in \Hom(A,W_{\mn}(B))$ 
such that $$g(a) = (i^*_m(a),g_{m+1}(a),\dots , g_{m+n}(a))$$ where 
$i^*_m(a) = (b_0,\dots ,b_m) \in W_m(R)$ and $g_{m+1}(a),\dots ,g_{m+n}(a)
\in B$. Then giving an element $g \in N^{\mn}X(B)$ corresponds to giving a map 
sending the generators of the algebra $N_{\mn}A$ to elements in $B$ as 
follows:
$$
\{\bx_{m+1},\dots, \bx_{m+n}\} \mapsto \{\bo_{m+1},\dots, \bo_{m+n}\}.
$$
The above map then under the identification in the diagram 
corresponds to an element in 
$\nHom(A,W_{\mn}(B))$ which is given by (still denoted by $g$):
$$
g:\bx \mapsto (i^*_m(\bx),\bo_{m+1},\dots, \bo_{m+n}).
$$
We may choose the coordinate $\bx$ such that $i^*(\bx) = 0$. This induces 
$i_m^*(\bx) = (0,\dots, 0) \in W_m(R)$.
Then the above map $g$ is given by
$$
\bx \mapsto (0,\dots ,0 , \bo_{m+1},\dots , \bo_{m+n}).
$$
Consider the composition of $g$ with the ghost map $w$ 
$$
A \stk{g}{\map} W_{\mn}(B) \stk{w}{\map} \Pi_{\mn}(B)
$$
given by 
\beqar
\bx &\stk{g}{\longmapsto} & (0,\dots ,0,\bo_{m+1},\dots , \bo_{m+n}) 
\stk{w}{\longmapsto} \\
& & \langle 0,\dots ,0,
\pi^{m+1}\bo_{m+1},\pi^{m+1}\bo_{m+1}^q+ \pi^{m+2}\bo_{m+2},\dots ,
\pi^{m+1}\bo_{m+1}^{q^n}+ \cdots + \pi^{m+n}\bo_{m+n} \rangle. 
\eeqar
Now choose $B=N_{\mn}A$. Then the identity map $\mathbbm{1} \in N^{\mn}X(N_{\mn}A)
$ corresponds to the element $(\bx \mapsto (0,\dots,0,\bx_{m+1},\dots, 
\bx_{m+n})) \in \nHom(A,W_{\mn}(N_{\mn}A))$. Note that if $\fra:N_{\mn}A \map
N_{[m]n+1}A$ is the associated lift of Frobenius (associated to the generalised
lateral Frobenius defined in (\ref{latFrob})), then we have the following 
commutative diagram of rings
$$
\xymatrix{
W_{\mn}(N_{\mn}A) \ar[d]_{\fra^i} \ar[r]^w & \Pi_{\mn}(N_{\mn}A) 
\ar[d]^{F_w^{\circ i}}\\
W_{[m]n-i}(N_{[m]n+i}A) \ar[r]^w & \Pi_{[m]n-i}(N_{[m]n+i}A)
}
$$ 
where the element $(0,\dots, 0, \bx_{m+1},\dots, \bx_{m+n}) \in W_{\mn}(N_{\mn}A)$
traces the following images
$$
\xymatrix{
(0,\dots,0,\bx_{m+1},\dots ,\bx_{m+n}) \ar@{|-{>}}[d]_{\fra^i} 
\ar@{|-{>}}[r]^w & 
\langle 0,\dots, 0, \pi^{m+1}\bx_{m+1},\dots \rangle \ar@{|-{>}}[d]^{F_w^{\circ
i}} \\
(0,\dots ,0, \mfrak{f}^i(\bx_{m+1}),\dots) \ar@{|-{>}}[r]^-w & \langle 0,
\dots, 0, 
\pi^{m+1} \bx_{m+1}^{q^{i-1}}+ \cdots + \pi^{m+i}\bx_{m+i},\dots \rangle
}
$$
Hence comparing the $(m+1)$-th ghost vector component in the above diagram
we obtain 
\begin{align}
\pi^{m+1}\fra^i(\bx_{m+1}) &= \pi^{m+1}\bx_{m+1}^{q^{i-1}}+ 
\pi^{m+2}\bx_{m+2}^{q^{i-2}}+  \cdots + \pi^{m+i} \bx_{m+i} \nonumber \\
\fra^i(\bx_{m+1}) &= \bx_{m+1}^{q^{i-1}}+ \pi \bx_{m+2}^{q^{i-2}} +
\cdots + \pi^{i-1}\bx_{m+i}. \label{identity}
\end{align}

\cblue
Consider the canonical map $\mg_* : J_*(N_{[m]1}A) \map N_{[m]*}A$ 
of prolongation sequences as in (\ref{mgd})

\begin{align}
\label{mgd1}
\xymatrix{
& \\
J_n(N_{[m]1}A) \ar[r]^{\mg_n} \ar@{.>}[u]& N_{[m]n+1}A \ar@{.>}[u]\\
J_{n-1}(N_{[m]1}A) \ar[r]^{\mg_{n-1}} \ar[u]^{(u,\nd)} & N_{[m]n}A 
\ar[u]_{(v,\Delta)} \\
J_{1}(N_{[m]1}A) \ar[r]^{\mg_1} \ar@{.>}[u] & N_{[m]2}A  \ar@{.>}[u] \\
N_{[m]1}A \ar[r]^{\mg_0 = \mathbbm{1}} \ar[u]^{(u,\nd)} & N_{[m]1}A. 
\ar[u]_{(v,\Delta)} \\
} 
\end{align}


\begin{theorem}
\label{coord4}
$(i)$ The above map in (\ref{mgd1}) $\mg_*: J_*(N_{[m]1}A) \map N_{[m]*}A$ is an 
isomorphism of prolongation sequences over $R_*$.
In particular for all $n \geq 1$, the isomorphism $\mg_*$ induces an 
isomorphism 
$$
g_{n-1}: J_{n-1}(N_{[m]1}A) \simeq N_{[m]n}A
$$
of $R$-algebras.

$(ii)$ If $X= {\bb{A}}^N = \Spec R[\bx]$, then for all $m \geq 0$ we have 
$$
N^{[m]*}X \simeq J^*(N^{[m]1}X)
$$
as an isomorphism of prolongation sequences of schemes over $S^*$.
In particular for all $n \geq 1$, the above induces
an isomorphism
$$
N^{\mn}X \simeq J^{n-1}N^{[m]1}X
$$
of schemes over $S$.

\end{theorem}
\begin{proof}
$(i)$ Consider $N_{[m]*}A$ where $N_{\mn}A \simeq R[\bx_{m+1},\dots ,
\bx_{m+n}]$ as above. Then by (\ref{identity}) the coordinate functions satisfy 
$$
\mfrak{f}^i(\bx_{m+1}) = \bx_{m+1}^{q^{i-1}} + \pi \bx_{m+2}^{q^{i-2}} + 
\cdots + \pi^{i-1} \bx_{m+i}, 
$$
for all $i=0,\dots , n$. Hence by Theorem \ref{univthm} we have the required
isomorphism 
$\mg_*: N_{[m] *}A \simeq J_{*}(N_{[m]1}A)$ and we are done.

$(ii)$ Follows from setting $A = R[\bx]$ where $X = \bb{A}^N = \Spec A$.
\end{proof}

Let $X = \A^1 = \Spec A$ where $A = R[x_0]$ with the marked $R$-point $P$ given
by $x_0 = 0$. Then $J^{m+n}X = \Spec N_{[m]n}A$ where 
$N_{[m]n}A = \Spec R[x_0, x_1, \cdots ,x_{m+n}] \simeq \bb{W}_{m+n}$.  

Note that $N^{[m]1}X \simeq \A^1 \simeq \Spec R[x_{m+1}]$ and hence
by Theorem \ref{coord4} we have 
\begin{align}
N^{[m]n}X \simeq J^{n-1}(N^{[m]1}X)
\simeq \bb{W}_{n-1} \simeq \Spec R[x_{m+1},\dots , x_{m+n}]
\end{align}

Then the ghost map $w$ in Section \ref{swv} induces the morphism 
\begin{align}
\label{latghost}
N^{[m]n}X \stk{w}{\longrightarrow} \Pi_{n-1}X
\end{align}
given by 
\begin{align*}
(x_{m+1},\dots, x_{m+n}) \mapsto \langle x_{m+1}, x_{m+1}^q + \pi x_{m+2},
 \dots ,x_{m+1}^{q^{n-1}} + \pi x_{m+2}^{q^{n-2}} + \cdots  \pi^{n-1} x_{m+n} \rangle
\end{align*}

Then by Theorem \ref{Wittone} we have 
\begin{align}
\label{fraghost}
\xymatrix{
N^{[m]n+1}X \ar[r]^-w \ar[d]_-\fra & \Pi_{n}X \ar[d]^-{\fra_w}\\
N^{[m]n}X \ar[r]^-w & \Pi_{n-1}X
}
\end{align}
where $\fra_w\langle z_{m+1},\dots, z_{m+n}\rangle= \langle z_{m+2},\dots ,
z_{m+n} \rangle$ for all $\langle z_{m+1}, \dots , z_{m+n}\rangle \in 
\Pi_n X$.

\begin{corollary}
\label{coord5}
Let $J \subset N_{[m]1}A$ be an ideal. For all $n$, $\mg_*$ induces the 
isomorphism
$$
g_{n}: (J,\nd J, \dots, \nd^{n} J) \simeq (J, \Delta J, \dots , 
\Delta^{n}J).
$$
\end{corollary}
\begin{proof}
Consider the prolongation sequence $N_{[m]*}A$. Then the lift of Frobenius 
$\fra$ satisfies the condition in Proposition \ref{coord3} by setting $\bt_i
= \bx_{i+m+1}$ for all $i$. Hence by Proposition \ref{coord3} $(2)$ we have
for all $n$ 
$$
N_{[m]n}A \simeq R[\bx^{m+1},\dots , \bx^{m+n}] \simeq R[\bx^{m+1}, 
\Delta \bx^{m+1} ,\dots , \Delta^{n-1} \bx^{m+1}].
$$
For all $n$, we have $J_{n-1}(N_{[m]1}A) = R[\bx^{m+1}, \nd(\bx^{m+1}), \dots
,\nd^{n-1}(\bx^{m+1})]$.
The isomorphism $\mg_*$ in Theorem \ref{coord4} induces the following 
isomorphism of prolongation sequences
$$\xymatrix{
& \\
J_{n}(N_{[m]1}A) = R[\bx^{m+1}, \dots , \nd^{n}(\bx^{m+1})] 
\ar[r]^-g \ar@{.>}[u] & R[\bx^{m+1},\dots \Delta^{n}(\bx^{m+1})] \simeq 
N_{[m]n+1}A \ar@{.>}[u] \\
J_{n-1}(N_{[m]1}A) = R[\bx^{m+1}, \dots , \nd^{n-1}(\bx^{m+1})] 
\ar[u]^{\nd}
\ar[r]^-g & R[\bx^{m+1},\dots \Delta^{n-1}(\bx^{m+1})] \simeq N_{[m]n}A
\ar[u]_\Delta \\
\ar@{.>}[u] & \ar@{.>}[u]\\
}$$
where $g(\nd^i(\bx^{m+1})) = \Delta^i(\bx^{m+1})$ for all $i$.
Hence for all $f \in N_{[m]1}A$ we have $g (\partial^i(f)) = \Delta^i(f)$
for all $i$ and hence proves the result.
\end{proof}

We will abbreviate $i^*_m$ as $i^*$ for the next proposition.

\begin{proposition}
\label{Delta-del}
Let $b \in J_mA$ be such that
$i^*b=i^*\d b= \cdots  = i^*\d^m b =0 $. Then for all $n \geq 1$ we have 
$$
\Delta^n i^*\d^{m+1}b - i^*\d^{m+n+1}b \in [i^*\d^{m+1}b,\dots, i^*\d^{m+n}b].$$
More so the above implies that 
$$
[i^*\d^{m+1}b,\dots, i^*\d^{m+n+1}b] = [i^*\d^{m+1}b, \Delta i^*\d^{m+1}b,\dots
,\Delta^n i^*\d^{m+1}b].
$$
\end{proposition}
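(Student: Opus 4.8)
The plan is to prove the displayed congruence by induction on $n$, running in parallel with the machinery of centered polynomials developed in Section \ref{cp}. The base case $n=1$ asks us to show $\Delta i^*\d^{m+1}b - i^*\d^{m+2}b \in [i^*\d^{m+1}b]$. Here I would exploit the defining relation $\mfrak{f}(a) = a^q + \pi\Delta a$ together with the compositional identity from Theorem \ref{lateral}, namely $\mfrak{f} i^* \phi^{m+1} = i^* \phi^{m+2}$ read on the relevant coordinate ring, to express $\Delta i^*\d^{m+1}b$ in terms of $i^*\d^{m+2}b$ modulo a centered polynomial in $i^*\d^{m+1}b$. The key input making everything centered is the hypothesis $i^*b = i^*\d b = \cdots = i^*\d^m b = 0$: this guarantees that when we apply the ghost/Witt comparison to $\d^{m+1}b, \dots$, the lower coordinates vanish, so the auxiliary polynomials that appear have no constant term.

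For the inductive step, assume the statement for $n-1$, so that $\Delta^{n-1} i^*\d^{m+1}b - i^*\d^{m+n}b \in [i^*\d^{m+1}b,\dots, i^*\d^{m+n-1}b]$. Applying $\Delta$ to this containment and invoking Lemma \ref{sideal}(5) (with $a = \Delta^{n-1}i^*\d^{m+1}b$, $b = i^*\d^{m+n}b$) produces $\Delta^n i^*\d^{m+1}b - \Delta i^*\d^{m+n}b \in [i^*\d^{m+n}b, i^*\d^{m+1}b,\dots, i^*\d^{m+n-1}b, \Delta i^*\d^{m+1}b, \dots]$; after using the inductive hypothesis again to rewrite the $\Delta^j i^*\d^{m+1}b$ terms, this lies in $[i^*\d^{m+1}b,\dots, i^*\d^{m+n}b]$. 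It then remains to show $\Delta i^*\d^{m+n}b - i^*\d^{m+n+1}b \in [i^*\d^{m+1}b,\dots, i^*\d^{m+n}b]$, which is the "one-step" relation between consecutive jet coordinates and follows, as in the base case, from the identity relating $\mfrak{f}$, $\Delta$, and $\phi^{m+n}$ in Theorem \ref{lateral} combined with Lemma \ref{coord1} applied to the coordinate functions $i^*\d^{m+1}b, \dots, i^*\d^{m+n+1}b$ (whose ghost components satisfy exactly the hypothesis of that lemma once the vanishing $i^*\d^j b = 0$ for $j\le m$ is used). Chaining the two containments via Lemma \ref{sideal}(1) completes the induction.

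Finally, the asserted equality of the two subsets $[i^*\d^{m+1}b,\dots, i^*\d^{m+n+1}b]$ and $[i^*\d^{m+1}b, \Delta i^*\d^{m+1}b,\dots, \Delta^n i^*\d^{m+1}b]$ is a formal consequence: the congruence just proved, for each $j = 1,\dots, n$, expresses $i^*\d^{m+j+1}b$ as (a centered polynomial in the $i^*\d^{m+1}b,\dots,i^*\d^{m+j}b$) plus $\Delta^j i^*\d^{m+1}b$, and symmetrically expresses $\Delta^j i^*\d^{m+1}b$ in terms of the $i^*\d^{\le m+j+1}b$; an easy induction using Lemma \ref{sideal}(1),(3) shows each generating set is contained in the span of the other, giving set equality.

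I expect the main obstacle to be the base/one-step case: correctly tracking the ghost-coordinate computation that yields $\mfrak{f}^i(\bx_{m+1}) = \bx_{m+1}^{q^{i-1}} + \pi\bx_{m+2}^{q^{i-2}} + \cdots + \pi^{i-1}\bx_{m+i}$ (already carried out in the affine-space case) and then transporting it through the quotient map $i^*$, making sure that the hypothesis $i^*\d^j b = 0$ for $j \le m$ is exactly what is needed to keep all intermediate polynomials centered. Once that single relation between consecutive coordinates is in hand, the rest is bookkeeping with Lemma \ref{sideal}.
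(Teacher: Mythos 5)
Your plan follows essentially the same route as the paper's proof: induction on $n$, with the base case and the ``one-step'' relation extracted from the compositional identity $\mfrak{f}\, i^*\phi^{m+1}=i^*\phi^{m+2}$ of Theorem \ref{lateral}, centeredness coming from the vanishing $i^*b=\cdots=i^*\d^m b=0$, the inductive step obtained by applying $\Delta$ to the inductive containment via Lemma \ref{sideal}, and the final set equality as a formal consequence. Two details in your write-up are off, though both are repairable. First, Lemma \ref{coord1} is not the right tool for the one-step relation $\Delta i^*\d^{m+n}b-i^*\d^{m+n+1}b\in[i^*\d^{m+1}b,\dots,i^*\d^{m+n}b]$: its hypothesis is the exact ghost relation for polynomial coordinate functions, which the elements $i^*\d^{m+j}b$ of a general affine quotient do not satisfy (they satisfy it only up to centered corrections). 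What is actually needed is Lemma \ref{phi-del}, which expands $\phi^{m+2}(a)$ as $\pi^{m+2}\d^{m+2}a$ plus a centered polynomial; the paper combines this with Theorem \ref{lateral} to get a universal identity and then substitutes $a=\d^{n-1}b$ --- this is exactly the mechanism you describe for the base case, so you should cite \ref{phi-del} rather than \ref{coord1}. Second, when you apply Lemma \ref{sideal}(5) to the containment written in the variables $i^*\d^{m+1}b,\dots,i^*\d^{m+n-1}b$, the derivative terms that appear are $\Delta i^*\d^{m+j}b$, not $\Delta^{j} i^*\d^{m+1}b$; absorbing them requires the set-equality half of the induction hypothesis at lower levels together with Lemma \ref{sideal}(2). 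The paper avoids this bookkeeping by first switching, via the inductive set equality, to the generating set $[i^*\d^{m+1}b,\Delta i^*\d^{m+1}b,\dots,\Delta^{n-2}i^*\d^{m+1}b]$ and only then applying $\Delta$, so that all derivative terms stay inside the $\Delta$-generating set; you may want to adopt that ordering, since it makes the step immediate.
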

\begin{proof}
For all $a \in J_nA$ by Theorem \ref{lateral} we have 
$$
i^* \phi^{m+2}(a) = \mfrak{f} i^*  \phi^{m+1}(a).
$$
By Lemma \ref{phi-del} we have 
$$\phi^{m+2}(a) = \pi^{m+2}\d^{m+2}(a) + P_{m+1}(a,\d a, \dots, \d^{m+1}a).
$$
for some centered polynomial $P_{m+1}(T_0,\cdots T_{m+1})$ defined over $R$. 
Applying $i^*$ to the above relation we obtain
\beqar
i^*\phi^{m+2}(a) &=& \pi^{m+2}i^*\d^{m+2}(a) + i^*P_{m+1}(a,\d a, \dots 
\d^{m+1}a)\\
&=& \pi^{m+2}i^*\d^{m+2}(a) + P_{m+1}(i^*a,i^*\d a, \dots i^*\d^{m+1}a)
\eeqar
Now note that applying Lemma \ref{phi-del} again we get
\beqar
\mfrak{f}i^* \phi^{m+1}(a) &=& \mfrak{f}i^*(\pi^{m+1}\d^{m+1}a + P_m(a,\d a,
\dots , \d^m a))\\
&=& \pi^{m+1}\fra(i^*\d^{m+1}a) + \fra(P_m(i^*a,\dots , i^*\d^m a))\\
&=& \pi^{m+1}((i^*\d^{m+1}a)^q + \pi\Delta i^*\d^{m+1}(a)) + P_m(i^*a,\dots 
,i^*\d^ma)^q \\
& & \hspace{4.5cm} + \pi\Delta P_m(i^*a,\dots , i^*\d^m a) \\
&=& \pi^{m+2}\Delta i^*\d^{m+1}(a) + \pi^{m+1}(i^*\d^{m+1}a)^q + 
P_m(i^*a,\dots ,i^*\d^ma)^q \\
& & \hspace{4.5cm} + \pi\Delta P_m(i^*a,\dots , i^*\d^m a) 
\eeqar
Hence combining the above two we get
\beqar
\pi^{m+2}i^*\d^{m+2}(a) &=& 
\pi^{m+2}\Delta i^*\d^{m+1}(a) + \pi^{m+1}(i^*\d^{m+1}a)^q \\
 + P_{m+1}(i^*a,i^*\d a, \dots i^*\d^{m+1}a) & &  + P_m(i^*a,\dots ,i^*\d^ma)^q  + \pi\Delta P_m(i^*a,\dots , i^*\d^m a) \\
\eeqar
This gives us
\beqar
i^*\d^{m+2}a &=& \Delta i^*\d^{m+1}a + \frac{1}{\pi^{m+2}}(P_m(i^*a,\dots 
i^*\d^m a)^q + \pi^{m+1}(i^*\d^{m+1}a)^q \\
& & \hspace{1.5cm} + \pi\Delta P_m(i^*a,\dots ,i^*\d^m a) - P_{m+1}(i^*a,\dots , i^*\d^{m+1}a)) \hspace{.5cm}\cdots~ (*)
\eeqar
Substituting $a$ for $b$ and $i^*b=i^*\d b= \cdots = i^*\d^m b =0$ in the above
we obtain
\beqar
i^*\d^{m+2}b &=& \Delta i^*\d^{m+1}b + \frac{1}{\pi^{m+2}}(\pi^{m+1}(i^*\d^{m+1}
b)^q - P_{m+1}(0,\dots, 0, i^*\d^{m+1}b))\\
&=& \Delta i^* \d^{m+1}b + H_1(i^*\d^{m+1}b)
\eeqar
where $H_1(t) = \frac{1}{\pi^{m+2}}(\pi^{m+1}t^q - P_{m+1}(0,\dots,0, t))$
and this shows the result is true for $n=1$.
Now we will use induction to prove our result  and assume that it holds 
true for $n-1$. Then we have 
$$
\Delta^{n-1}i^*\d^{m+1}b - i^*\d^{m+n}b \in [i^*\d^{m+1}b,i^*\d^{m+2}b,\dots
,i^*\d^{m+n-1}b]
$$
which in turn implies 
\begin{align}
\label{sim}
[i^*\d^{m+1}b,\dots , i^*\d^{m+n}b] = [i^*\d^{m+1}b, \Delta i^*\d^{m+1}b,
\dots, \Delta^{n-1}i^*\d^{m+1}b],
\end{align}
since $[i^*\d^{m+1}b,i^*\d^{m+2}b,\dots
,i^*\d^{m+n-1}b] = [i^*\d^{m+1}b,\Delta i^*\d^{m+1}b,\dots
,\Delta^{n-2} i^*\d^{m+1}b]$ by the induction hypothesis.
Let
\begin{align}
\label{si}
\Di{n-1}{m+1}b - \di{m+n} b = L(\di{m+1}b,\Di{}{m+1}b,\dots, \Di{n-2}{m+1}b)
\end{align}
for some centered $L$.  Then applying $\Delta$ to 
(\ref{si}) gives us
\beqar
\Di{n}{m+1}b-\Di{}{m+n}b+ & & \\
 C_\pi(\Di{n-1}{m+1}b, -\di{m+n}b) &=&
\Delta L(\di{m+1}b,\Di{}{m+1}b,\dots , \Di{n-2}{m+1}b)\\
&=& M(\di{m+1}b ,\dots, \Di{n-1}{m+1}b) \mb{ by Lemma \ref{sideal}(2)}
\eeqar
for some centered $M$ of degree $\geq 1$. Therefore we obtain
\beqar
\Di{n}{m+1}b-\Di{}{m+n}b &=& M(\di{m+1}b,\dots, \Di{n-1}{m+1}b)\\
& & -C_\pi(\Di{n-1}{m+1}b,-\di{m+n}b) \\
\eeqar
Note that substituting $\di{m+n}b$ by $\Di{n-1}{m+1}b -L(\di{m+1}b,\dots,
\Di{n-2}{m+1}b)$ as in (\ref{si})  we observe that 
$$
C_\pi(\Di{n-1}{m+1}b, -\di{m+n}b) = D(\di{m+1}b,\dots, \Di{n-1}{m+1}b)
$$
for some centered $D$ of degree $\geq 1$. Hence we obtain 
\begin{align}
\label{kal}
\Di{n}{m+1}b-\Di{}{m+n}b = H(\di{m+1}b, \dots, \Di{n-1}{m+1}b)
\end{align}
where $H= M-D$. Now substitute $a=\d^{n-1}b$ in $(*)$. Then we have
\beqar
\di{m+n+1}b &=& \Di{}{m+n}b + \frac{1}{\pi^{m+2}}(P_m(\di{n-1}b,\dots, 
\di{m+n-1} b)^q + \pi^{m+1}(i^*\d^{m+n}b)^q \\
& &+ \pi \Delta P_m(\di{n-1}b, \dots, \di{m+n-1}b) -P_{m+1}(\di{n-1}b,\dots,
i^*\d^{m+n}b))
\eeqar
Then setting
$$
i^*b = i^*\d b= \cdots = i^*\d^m b = 0
$$
in the above relation we obtain 
\begin{align}
\label{som}
\Di{}{m+n}b - \di{m+n+1}b =  \tilde{K}(\di{m+1}b,\dots, \di{m+n}b)
\end{align}
for some centered $\tilde{K}$. But since $[\di{m+1}b,\dots , \di{m+n}b]
= [\di{m+1}b,\dots, \Di{n-1}{m+1}b]$ as in (\ref{sim}) we must have 
\begin{align}
\label{can}
\tilde{K}(\di{m+1}b,\dots, \di{m+n}b) = K(\di{m+1}b,\dots, \Di{n-1}{m+1}b)
\end{align}
for some centered $K$. Therefore adding (\ref{kal}) and (\ref{som}) along with
substituting (\ref{can}) in it we obtain
\beqar
\Di{n}{m+1}b - \di{m+n+1}b &=&K(\di{m+1}b,\dots, \Di{n-1}{m+1}b) \\
& & \hspace{2cm} + H(\di{m+1}b,\dots ,\Di{n-1}{m+1}b) 
\eeqar
Hence we have 
$$
\Di{n}{m+1}b - \di{m+n+1}b \in [\di{m+1}b,\dots, \Di{n-1}{m+1}b]
$$
and therefore we also have
$$
\Di{n}{m+1}b - \di{m+n+1}b \in [\di{m+1}b,\dots, \di{m+n}b].
$$
Then the above clearly implies 
$$
[\di{m+1}b,\dots, \Di{n}{m+1}b] = [\di{m+1}b,\dots \di{m+n+1}b]
$$
and we are done.
\end{proof}

\begin{corollary}
\label{sameideal}
For all $m$ and $n$ we have
$$
(\di{m+1}b,\dots , \Di{n}{m+1}b) = (\di{m+1}b,\dots \di{m+n+1}b).
$$
\end{corollary}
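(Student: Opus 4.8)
The plan is to obtain this as a purely formal consequence of Proposition \ref{Delta-del}. First I would record the elementary fact that, for any finite list of elements $a_1,\dots,a_k$ in an $R$-algebra, the ideal generated by the set $[a_1,\dots,a_k]$ is exactly $(a_1,\dots,a_k)$. One inclusion is Lemma \ref{sideal}(3), which already gives $[a_1,\dots,a_k]\subseteq(a_1,\dots,a_k)$, hence the ideal generated by $[a_1,\dots,a_k]$ is contained in $(a_1,\dots,a_k)$. For the reverse inclusion, each $a_i$ itself lies in $[a_1,\dots,a_k]$: take $F=T_i$, which is a centered polynomial with $F(a_1,\dots,a_k)=a_i$. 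So the ideal generated by $[a_1,\dots,a_k]$ contains every generator $a_i$, and therefore contains all of $(a_1,\dots,a_k)$; the two ideals coincide.

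With this observation in hand the corollary is immediate. Proposition \ref{Delta-del} asserts the equality of subsets
$$[\di{m+1}b,\dots,\Di{n}{m+1}b] \;=\; [\di{m+1}b,\dots,\di{m+n+1}b]$$
of $N_{[m]1}A$-type jet rings, and passing to the ideals generated by the two sides, the preceding paragraph turns the left-hand side into $(\di{m+1}b,\dots,\Di{n}{m+1}b)$ and the right-hand side into $(\di{m+1}b,\dots,\di{m+n+1}b)$. This yields the claimed ideal equality.

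I do not expect any genuine obstacle here: all the substantive work — the inductive bracket identities and the passage through the lateral Frobenius $\mfrak{f}$ and Lemma \ref{phi-del} — has already been carried out in Proposition \ref{Delta-del}, and Lemma \ref{sideal} supplies the one structural input needed. The only point worth stating explicitly, and the only thing this proof really contains, is why the bracket set and the ideal it spans have the same generators, namely that the generators $\di{m+j}b$ (respectively $\Di{j}{m+1}b$) are themselves values of centered polynomials — the coordinate monomials — in those same generators.
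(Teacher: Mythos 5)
Your proof is correct and takes essentially the same route as the paper: the paper's own proof simply cites the bracket-set equality of Proposition \ref{Delta-del} and declares the ideal equality immediate, while you supply the (easy) bridge — namely Lemma \ref{sideal}(3) for one inclusion and the fact that each generator is itself a value of a centered coordinate polynomial for the other. Nothing is missing; you have just made explicit the step the paper leaves implicit.
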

\begin{proof}
This follows immediately from the fact that for all $m$ and $n$ by
Proposition \ref{Delta-del} we have
$$[\di{m+1}b,\dots, \Di{n}{m+1}b] = [\di{m+1}b,\dots , \di{m+n+1}b].$$
\end{proof}

For all $n$ define
\begin{align}
\nonumber
D_n = \frac{R[\bx^{m+1},\dots, \nd^n \bx^{m+1}]}{(J,\nd J, \dots,\nd^{n}J)}.
\end{align}
Then $\partial : R[\bx^{m+1},\dots, \nd^{n-1} \bx^{m+1}] \map 
R[\bx^{m+1},\dots , \nd^{n} \bx^{m+1}]$ naturally descends to 
$\pi$-derivations
$$
\partial : \frac{R[\bx^{(m+1)},\dots, \nd^{n-1} \bx^{(m+1)}]}{(J, \nd J,
\dots, \nd^{n-1} J)} \longrightarrow
\frac{R[\bx^{(m+1)},\dots, \nd^n\bx^{(m+1)}]}{(J, \nd J,
\dots, \nd^{n} J)}
$$
for all $n$. Hence the above $\pi$-derivations $\nd$ make 
$D_* :=\{D_n\}_{n=0}^\infty$ into a prolongation sequence.

For all $n$ define
\begin{align}
\nonumber
C_n = \frac{R[\bx^{m+1},\dots, \Delta^n \bx^{m+1}]}{(J,\Delta J, \dots,
\Delta^{n}J)}.
\end{align}
Then $\Delta : R[\bx^{m+1},\dots, \Delta^{n-1} \bx^{m+1}] \map 
R[\bx^{m+1},\dots , \Delta^{n} \bx^{m+1}]$ naturally descends to
$\pi$-derivations
$$
\Delta : \frac{R[\bx^{(m+1)},\dots, \Delta^{n-1} \bx^{(m+1)}]}{(J, \Delta J,
\dots, \Delta^{n-1} J)} \longrightarrow
\frac{R[\bx^{(m+1)},\dots, \Delta^n\bx^{(m+1)}]}{(J, \Delta J,
\dots, \Delta^{n} J)}
$$
for all $n$. Hence the above $\pi$-derivations $\Delta$ make
$C_* :=\{C_n\}_{n=0}^\infty$ into a prolongation sequence.

Hence the natural map $\mg_*$ descends naturally to a map of prolongation
sequences (still denoted by ) $\mg_* : D_* \map C_*$.

\begin{theorem}
\label{coord6}
The map $\mg_*: D_* \map C_*$ is an isomorphism of prolongation sequences. 
In particular for all $n$ we have
$$
g: D_n =  \frac{R[\bx^{(m+1)},\dots ,\nd^{n}\bx^{(m+1)}]}{(J,\nd J,\dots,
\nd^nJ)} \map 
\frac{R[\bx^{(m+1)},\dots ,\Delta^{n}\bx^{(m+1)}]}{(J,\nd J,\dots,
\Delta^nJ)} = C_n
$$
is an isomorphism of $R$-algebras.
\end{theorem}
\begin{proof}
The result follows from Theorem \ref{coord4} and Corollary \ref{coord5}.
\end{proof}

\color{black}

\subsection{The case for Affine Schemes}
Let $S = \Spec R$. Consider $I \subset R[\bx]$ to be an ideal and let 
$(I,\d I,\dots, \d^n I)$ denote 
the ideal generated by $\d^i f, f \in I, i=0, \dots , n$. Let $i^*:R[\bx] 
\map R$ be a ring homomorphism such that $I \subset \ker i^*$, that is 
$i^*$ corresponds to an element $P: \Spec R \map \Spec~ (R[\bx]/I)$.

Our next aim is to show that the above Theorem \ref{coord4} 
is true for any affine $X = \Spec 
R[\bx]/I$. Note that  any $R$-point $P \in X(R)$ corresponds to a 
surjective map $i^*:R[\bx] \map R$ such that $I \subset \ker(i^*)$. By the
universal property of jet rings the induced $R$-point $P_m \in J^mX$ 
corresponds to the $R$-algebra map 
$i^*_m:R[\bx,\dots, \bx^{(m)}]
\map R$ given by $i^*_m(\bx^j) = \d^j (i^*(\bx))$ for all $j =0, \dots, m$.
Clearly we have  $(I,\d I, \dots, \d^m I) \subset \ker(i^*_m)$.
Set $B= R[\bx]/I$. Then $J_mB = \frac{R[\bx,\bx',\dots, \bx^{(m)}]}
{(I,\d I, \dots, \d^m I)}$ and we have the induced morphism 
$$ i^*_m:J_mB \map R.$$ 
Recall the following tensor product of algebras
$$
\xymatrix{
J_{m+n}B \ar[r]^{i^*_m} & N_{\mn}B \\
J_mB \ar[r]^{i^*_m} \ar[u] & R \ar[u]
}$$
where $N_{\mn}B:= J_{m+n}B \otimes_{J_mB} R$ and $N^{\mn}X:= 
\Spec N_{\mn}B$.
Then  $N^{\mn}X = J^{m+n}X \times_{J^mX} S$.

\begin{lemma}
\label{N\mn}
For all $m$ and $n$ we have  
$$
N_{\mn}B \simeq \frac{R[\bx^{(m+1)},\dots , \bx^{(m+n)}]}{(i^*\d^{m+1}I,\dots,
i^*\d^{m+n}I)}.
$$
\end{lemma}
\begin{proof}
Directly follows from above.
\end{proof}


\cblue
\begin{corollary}
\label{samequot}
For $B$ as above and all $n \geq 1$ we have 
$$
N_{[m]n}B = 
\frac{R[\bx^{m+1},\dots, \bx^{(m+n)}]}{(i^*_m\d^{m+1}I, \dots,
i^*_m \d^{m+n}I)} \simeq
\frac{R[\bx^{(m+1)},\dots, \bx^{(m+n)}]}{(i^*_m\d^{m+1}I,
\dots, \Delta^{n-1}i^*_m\d^{m+1} I)}
$$
\end{corollary}
\begin{proof}
This immediately follows from Corollary \ref{sameideal}.
\end{proof}

Recall for all $n$, the canonical $\pi$-derivation 
$$\Delta: R[\bx^{(m+1)},\dots ,\bx^{(m+n)}] \map R[\bx^{(m+1)}, \dots , 
\bx^{(m+n+1)}]$$ 
in the case of affine $N$-space. 

By Corollary \ref{sameideal}, for all $n$, the $\pi$-derivation $\Delta$ 
descends uniquely to a $\pi$-derivation on the quotient rings
\begin{align}
\label{Delta-desc}
\Delta: N_{[m]n}B = 
\frac{R[\bx^{m+1},\dots, \bx^{(m+n)}]}{(i^*_m\d^{m+1}I, \dots,
i^*_m \d^{m+n}I)} \map 
\frac{R[\bx^{m+1},\dots, \bx^{(m+n+1)}]}{(i^*_m\d^{m+1}I, \dots,
i^*_m \d^{m+n+1}I)} = N_{[m]n+1}B.
\end{align}
and satisfies $\fra(b) = b^q + \pi \Delta (b)$.
Hence the above $\pi$-derivations $\Delta$ make $N_{[m]*}B := 
\{N_{[m]n}B\}_{n=1}^\infty $ into a prolongation sequence over $R_*$.

The universal property for canonical prolongation sequences (\ref{univ}) 
implies that we have
$$
\Hom_R(N_{[m]1}B, N_{[m]1}B) \simeq \Hom_{R_*}(J_*(N_{[m]1}B),N_{[m]*}B). 
$$ 
Hence the identity map $g_0:= \mathbbm{1} \in 
\Hom_R(N_{[m]1}B,N_{[m]1}B)$ induces the following map of
prolongation sequences $\mg_* : J_*(N_{[m[1]}B) \map N_{[m]*}B$ which is a system of 
$R$-algebra homomorphisms given by
\begin{align}
\label{mgd2}
\xymatrix{
& \\
J_n(N_{[m]1}B) \ar[r]^{\mg_n} \ar@{.>}[u]& N_{[m]n+1}B \ar@{.>}[u]\\
J_{n-1}(N_{[m]1}B) \ar[r]^{\mg_{n-1}} \ar[u]^\partial & N_{[m]n}B 
\ar[u]_\Delta \\
J_{1}(N_{[m]1}B) \ar[r]^{\mg_1} \ar@{.>}[u]^\partial & N_{[m]2}B  
\ar@{.>}[u]_\Delta \\
N_{[m]1}B \ar[r]^{\mg_0 = \mathbbm{1}} \ar[u]^\partial & N_{[m]1}B. 
\ar[u]_\Delta \\
} 
\end{align}
where $\partial: J_n(N_{[m]1}B) \map J_{n+1}(N_{[m]1}B)$ for all $n$, is
the canonical $\pi$-derivation for the canonical prolongation sequence
$J_*(N_{[m]1}B)$.

\begin{corollary}
\label{affjetalg}
For $B$ as above, for all $n \geq 1$ we have 
$$
N_{\mn}B \simeq \frac{R[\bx^{(m+1)},\dots, \Delta^{n-1}\bx^{(m+1)}]}
{(i^*_m\d^{m+1}I, \dots, \Delta^{n-1}i^*_m\d^{m+1} I)}.
$$
\end{corollary}
\begin{proof}
Since $N_{[m]1}B = R[\bx^{(m+1)}]/(i^*_m\d^{m+1}I)$ we have
\beqar
N_{[m]n}B &=& \frac{R[\bx^{(m+1)},\dots, \bx^{(m+n)}]}{(i^*_m\d^{m+1}I,
\dots, i^*_m\d^{m+n} I)} \\
&= & \frac{R[\bx^{m+1},\dots, \Delta^{n-1}\bx^{(m+1)}]}{(i^*_m\d^{m+1}I, \dots, 
\Delta^{n-1}i^*_m \d^{m+1}I)} \hspace{.5cm} \mb{(by Theorem 
\ref{coord4} and Corollary \ref{sameideal})}
\eeqar
and we are done.
\end{proof}




\begin{proof}[\cbl{\bf Proof of Theorem \ref{Niso1}}]
Consider the canonical prolongation sequence $J_*(N_{[m]1}B)=
\{J_n(N_{[m]1}B)\}_{n=0}^\infty$, which for all $n$ is given by 
$$
J_n(N_{[m]1}B) = \frac{R[\bx^{m+1},\nd \bx^{m+1}, \dots , \nd^{n-1} \bx^{m+1}]}
{(i^*_m \d^{m+1} I, \nd i^*_m \d^{m+1}I, \dots, 
\nd^{n-1} i^*_m \d^{m+1}I) },
$$
where $\nd : J_n(N_{[m]1}B) \map J_{n+1}(N_{[m]1}B)$ is the canonical 
$\pi$-derivation for the canonical prolongation sequence $J_*(N_{[m]1}B)$
given by $\nd (\nd^i\bx^{m+1})= \nd^{i+1}\bx^{m+1}$ for all $i = 0,1 ,\dots$.

By Corollary \ref{affjetalg}, we have 
$$
N_{[m]n}B = \frac{R[\bx^{m+1},\dots, \Delta^{n-1}\bx^{(m+1)}]}{(i_m^*\d^{m+1} I,
\dots , \Delta^{n-1}i^*_m \d^{m+1}I)}
$$
Hence the result follows from Theorem \ref{coord6} by setting the ideal $J = 
i^*_m\d^{m+1}I$,  $D_* = J_*(N_{[m]1}B)$ and $C_* = N_{[m]*}B$.
\end{proof}






\color{black}

\subsection{The $\pi$-formal Schemes case}
Let $\Ou$ is a Dedekind domain of characteristic $0$ and
$R$ is a $\pi$-adically complete discrete valuation ring. Let $l=R/\pi R$ be
its residue field. Denote $S = \Spf R$.
Let $X$ be a $\pi$-formal scheme over $\Spf R$ where $R$ is now $\pi$-adically
complete and $P: \Spf R \map X$ be an $R$-marked point. For all $n\geq 1$, let $u: J^nX \map X$ be the natural projection 
morphism.

\begin{lemma}
\label{uinverse}
For any open $\pi$-formal subscheme $U \inj X$ we have 
$$
u^{-1}U := U \times_X J^nX \simeq J^nU.
$$
\end{lemma}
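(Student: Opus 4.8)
The statement asserts that the formation of the arithmetic jet space $J^n$ commutes with open immersions, in the sense that for an open $\pi$-formal subscheme $U \inj X$ one has $u^{-1}U \simeq J^nU$, where $u : J^nX \map X$ is the projection. The plan is to check this functorially and then appeal to the uniqueness of representing schemes. First I would recall that $J^nX$ is representable in the $\pi$-formal category and that its functor of points is $J^nX(B) = X(W_n(B))$ for any ($\pi$-adically complete) $R$-algebra $B$. The projection $u$ corresponds, on $B$-points, to the map $X(W_n(B)) \map X(B)$ induced by the first Witt-component ring homomorphism $W_n(B) \map B$ (equivalently, by the closed immersion $\Spf B \inj \Spf W_n(B)$ coming from the Teichmüller-type section, or simply by restriction along $B = W_0(B)$). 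Thus an element of $(u^{-1}U)(B) = J^nX(B) \times_{X(B)} U(B)$ is a morphism $\Spf W_n(B) \map X$ whose composite with $\Spf B \inj \Spf W_n(B)$ factors through the open subscheme $U$.

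The key step is then the following topological observation: the closed immersion $\Spf B \inj \Spf W_n(B)$ is a \emph{nilpotent} (indeed, infinitesimal in the $\pi$-adic sense) thickening — the kernel of $W_n(B) \twoheadrightarrow B$ consists of Witt vectors supported in positive components, which is a $\pi$-adically topologically nilpotent ideal — so $\Spf B$ and $\Spf W_n(B)$ have the same underlying topological space. Consequently, a morphism $\Spf W_n(B) \map X$ lands set-theoretically in the open $U$ \emph{if and only if} its restriction to $\Spf B$ does. Therefore the fiber-product condition defining $(u^{-1}U)(B)$ is equivalent to requiring the morphism $\Spf W_n(B) \map X$ to factor through $U$, i.e. to lie in $U(W_n(B)) = J^nU(B)$. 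This yields a natural bijection $(u^{-1}U)(B) \simeq J^nU(B)$, functorial in $B$, and since both sides are representable $\pi$-formal schemes (the left by the standard fact that fiber products of representable functors are representable, the right by the representability of jet spaces recalled earlier), Yoneda gives the claimed isomorphism. One should also note the compatibility with the projections to $X$: both $u^{-1}U$ and $J^nU$ map to $U$ via the respective $u$-maps, and the bijection above identifies these, so the isomorphism is one of $X$-schemes (in fact of $U$-schemes).

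The main obstacle I anticipate is the topological input — verifying carefully that $\Spf B \inj \Spf W_n(B)$ is a homeomorphism on underlying spaces in the $\pi$-formal setting, so that the open-ness condition can be checked after restriction. This requires knowing that the augmentation ideal $\ker(W_n(B) \map B)$ is contained in the radical, which follows because each coordinate $x_i$ with $i \geq 1$ satisfies a relation making it topologically nilpotent modulo $\pi$ (concretely, the ghost components show $\pi^i x_i$ lies in the ideal generated by lower terms, and one works $\pi$-adically); this is where one uses that $R$, hence $B$, is $\pi$-adically complete. Everything else is a formal manipulation of functors of points together with the already-established representability of jet spaces. A minor bookkeeping point is to ensure the argument is stated for an \emph{affine} open $U = \Spf B'$ first and then glued, or simply phrased directly for the functor of points as above; the functorial phrasing avoids the gluing and is the cleanest route.
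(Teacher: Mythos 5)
Your argument is correct, but note that the paper does not actually prove this lemma: it simply cites Proposition 1.7 of Buium's \emph{Differential modular forms} (\cite{bui00}), so your proposal supplies a self-contained proof where the paper defers to the literature. Your route --- reading both sides as functors of points, $J^nX(B)=X(W_n(B))$ and $(u^{-1}U)(B)=J^nX(B)\times_{X(B)}U(B)$, and reducing the lemma to the statement that a morphism $\Spf W_n(B)\to X$ factors through the open $U$ if and only if its restriction along $\Spf B\hookrightarrow \Spf W_n(B)$ does --- is the standard reason Greenberg/jet-type functors commute with open immersions, and the crux is exactly where you locate it: the kernel of $W_n(B)\twoheadrightarrow B$ consists of topologically nilpotent elements, so the two formal spectra have the same underlying space and ``landing in $U$'' can be tested on the bottom component. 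For the nilpotence claim your ghost-component sketch is a bit loose; the cleanest justification is the Verschiebung identity $V(a)V(b)=\pi V(ab)$ for $\pi$-typical Witt vectors, which gives $x^{k+1}\in \pi^k W_n(B)$ for any $x\in V(W_{n-1}(B))$, hence topological nilpotence since $B$ (and so $W_n(B)$, of finite length) is $\pi$-adically complete; with that, every open prime of $W_n(B)$ contains the kernel and $\Spf B\to\Spf W_n(B)$ is a homeomorphism. You should also use that $U\to X$ is a monomorphism so that factoring through $U$ is a condition rather than extra data; with these points made precise, Yoneda plus the known representability of $J^nU$ and of the fiber product gives the isomorphism of formal schemes over $U$, exactly as you say. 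The trade-off is simply that the paper's one-line citation is shorter, while your argument makes the mechanism explicit and would also adapt to other situations where the structure map of the relevant ``Witt-like'' ring has topologically nilpotent kernel.
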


\begin{proof}
This is Proposition 1.7 of \cite{bui00}.
\end{proof}

\cblue



\begin{proof}[\cbl{\bf Proof of Theorem \ref{formalcase1}}]
Let $U \inj X$ be an open affine $\pi$-formal scheme such that the marked point
$P:\Spf R \map X$ factors through $U$. Hence by lemma \ref{uinverse} we
have that the morphism $P^m: \Spf R \map J^mX$ factors through the open affine 
$\pi$-formal scheme $J^mU$. Then for each $n$ we have
$$
N^{\mn}X:= J^{m+n}X \times_{J^mX} P^m = J^{m+n}U \times_{J^mU} P^m.
$$
Hence it is sufficient to assume $X$ is affine and then the result follows
from Theorem \ref{affjet1}.
\end{proof}

\color{black}

Let $\bb{W}_n$ denote the Witt vectors of length $n+1$ 
as a group (in fact ring) object in
the category of $\pi$-formal schemes. Then $\bb{W}_n \simeq \Spf R[x_0,\dots,
x_n]\h$ and the ring $R[x_0,\dots,x_n]\h$ has a coalgebra structure induced
from the Witt vector addition. For each $n$ let us denote $N^nG :=N^{[0]n}G$.

\begin{lemma}
\label{JetGa}
For all $n$ we have $J^n\hG \simeq \bb{W}_n$.
\end{lemma}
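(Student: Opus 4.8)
The plan is to compute the jet space of the additive group $\hG = \Spf R[x]\h$ directly by unwinding the functor of points, and to recognize the answer as the Witt-vector group scheme $\bb{W}_n$. By definition, for any $\pi$-adically complete $R$-algebra $B$ we have $J^n\hG(B) = \hG(W_n(B))$. Now $\hG(W_n(B))$ is the set of topologically nilpotent elements of the ring $W_n(B)$, i.e. the additive group of $W_n(B)$ with its $\pi$-adic (equivalently, Verschiebung-adic) topology; as a set this is $W_n(B) = B^{n+1}$, and the group law is precisely Witt-vector addition. On the other hand $\bb{W}_n(B) = W_n(B)$ with the same addition law, so we get a natural isomorphism of group-valued functors $J^n\hG \cong \bb{W}_n$. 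Since both sides are representable $\pi$-formal schemes ($J^n\hG$ by the general representability result recalled in Section \ref{pre}, and $\bb{W}_n \simeq \Spf R[x_0,\dots,x_n]\h$), Yoneda promotes this to an isomorphism of $\pi$-formal schemes, and it is visibly compatible with the group (indeed ring) structures.

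First I would make the identification $\hG(W_n(B)) = W_n(B)$ explicit: write $\hG = \Spf R[x]\h$, so a $B'$-point of $\hG$ for a $\pi$-adically complete $R$-algebra $B'$ is a continuous $R$-algebra map $R[x]\h \map B'$, i.e. the choice of the image of $x$, an arbitrary element of $B'$, with addition on $\hG$ corresponding to addition in $B'$. Taking $B' = W_n(B)$ gives $J^n\hG(B) = W_n(B)$ as abelian groups, naturally in $B$. Next I would observe that the standard coordinates on $J^n\hG$ coming from the jet-space construction — the Witt components $(x_0,x_1,\dots,x_n)$ of the universal point — are exactly the components of a Witt vector, so the representing ring $J_n(R[x]\h)$ is $R[x_0,\dots,x_n]\h$, matching the representing ring of $\bb{W}_n$. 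Finally I would check that under this coordinate identification the comultiplication induced by the group law on $J^n\hG$ is the one induced by Witt addition, which is immediate from the previous paragraph.

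The only genuinely delicate point is bookkeeping about topologies and completions: one must make sure that "$B$-points of $\hG$" over the ring $W_n(B)$ really means all of $W_n(B)$ (not just a completed/topologically nilpotent part) and that this is compatible with the $\pi$-formal structure on both $J^n\hG$ and $\bb{W}_n$; here it helps that $W_n(B)$ is $\pi$-adically complete when $B$ is and that $W_n$ commutes with the relevant completions, so no subtlety actually arises — $\hG(W_n(B))$ is the full underlying additive group of $W_n(B)$. I expect this to be entirely routine, so the main "obstacle" is just being careful that the isomorphism is natural in $B$ and respects the group structure, after which Yoneda finishes the argument.
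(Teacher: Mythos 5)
Your argument is essentially the paper's proof: both identify $J^n\hG(B) = \hG(W_n(B)) = \Hom_R(R[x]\h, W_n(B)) \simeq \bb{W}_n(B)$ naturally in $B$, with Witt addition as the group law, and conclude by representability. One small wording issue: your opening claim that $\hG(W_n(B))$ is the set of \emph{topologically nilpotent} elements is not quite right (since $\hG$ is the $\pi$-adic, not $x$-adic, completion, a point is an arbitrary element of $W_n(B)$), but you correct this yourself in the next paragraph, so the proof stands as written.
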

\begin{proof}
Let $x$ be a coordinate at the origin of $\hG$. Then $\hG \simeq \Spf R[x]\h$.
Hence  given any $R$-algebra $B$ we have,
$$J^n\hG (B) = \hG(W_n(B)) = \Hom_R(R[x]\h, W_n(B)) \simeq \bb{W}_n(B)$$
and we are done.
\end{proof}


\begin{proof}[\cbl{\bf Proof of Theorem \ref{JetGg}}]
Since $G$ is a smooth $\pi$-formal group scheme, we have the following 
short exact sequence 
$$
0 \map N^{\mn}G \map J^{m+n}G \map J^mG \map 0
$$
of $\pi$-formal group schemes.
By Proposition 2.2 and Lemma $2.3$ in \cite{bui95} we have $N^{[m]1}G \simeq 
(\hG)^d$. Hence by Theorem \ref{formalcase1} and Lemma \ref{JetGa} 
we get 
$$N^{\mn}G \simeq J^{n-1}(N^{[m]1}G) \simeq (J^{n-1}\hG)^d \simeq (\bb{W}_{n-1})^d
$$
and we are done.
\end{proof}

\begin{corollary}
\label{JetG}
Let $R$ and $G$ be as above.
Then $N^nG \simeq \left(\bb{W}_{n-1}\right)^d$ and $J^nG$
satisfies the following short exact sequence of $\pi$-formal group schemes
$$
0 \map \left(\bb{W}_{n-1}\right)^d \map J^nG \map G \map 0.
$$
\end{corollary}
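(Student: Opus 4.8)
The plan is to derive this corollary as the case $m = 0$ of Theorem \ref{JetGg}. First I would record that, by the convention $N^nG := N^{[0]n}G$, the group scheme $N^nG$ is the kernel of the natural projection $u\colon J^nG \map J^0G$, and that $J^0G \simeq G$ because $W_0(B) = B$ for every $R$-algebra $B$. Hence the canonical short exact sequence of smooth $\pi$-formal group schemes displayed in (\ref{short1}) takes the form
$$
0 \map N^nG \map J^nG \map G \map 0,
$$
its right exactness being Corollary $1.5$ of \cite{bui95}.

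Next I would specialize Theorem \ref{JetGg} to $m = 0$. Under the standing hypotheses on $R$ and $G$ (namely, $R$ of characteristic $0$ with $v_\pi(p) \leq p-2$ and $G$ commutative smooth $\pi$-formal of relative dimension $d$), that theorem yields an isomorphism $N^{[0]n}G \simeq (\bb{W}_{n-0-1})^d = (\bb{W}_{n-1})^d$ of $\pi$-formal group schemes, which is precisely the first assertion $N^nG \simeq (\bb{W}_{n-1})^d$. Substituting this isomorphism into the exact sequence of the previous paragraph produces
$$
0 \map (\bb{W}_{n-1})^d \map J^nG \map G \map 0
$$
as an exact sequence of $\pi$-formal group schemes, which is the second assertion.

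Since the statement is a direct specialization of Theorem \ref{JetGg} — which in turn rests on Theorem \ref{formalcase}, Lemma \ref{JetGa}, and the computation $N^{[m]1}G \simeq (\hG)^d$ from \cite{bui95} — there is no genuine obstacle to overcome. The only points meriting a word are the identification $J^0G \simeq G$ and the bookkeeping $N^nG = N^{[0]n}G$, both immediate from the definitions, so the argument amounts to a one-line invocation of the earlier theorem.
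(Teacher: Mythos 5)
Your proof is correct and is essentially the paper's own argument: the corollary is obtained by setting $m=0$ in Theorem \ref{JetGg}, with the only bookkeeping being $N^nG = N^{[0]n}G$ and $J^0G \simeq G$. The extra remarks about right exactness via Corollary $1.5$ of \cite{bui95} are harmless but not needed, since Theorem \ref{JetGg} already supplies the short exact sequence.
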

\begin{proof}
It follows directly from Theorem \ref{JetGg} for $m=0$.
\end{proof}

\section{Delta Isocrystals and their Integral Models}
\label{intmod}
From now on, $\Ou$ is a Dedekind domain of characteristic $0$ and
$R$ is a $\pi$-adically complete discrete valuation ring with a 
$\pi$-derivation $\d$ lifting the one on $\Ou$. Let $l= R/\pi R$ be its 
residue field. Also assume $\vpi(p) \leq p-2$. Denote $S = \Spf R$.
Let $G$  be a $\pi$-formal group scheme over $S$ of relative dimension $g$. 
From this section onwards, we will restrict ourselves to the case $m=0$. For 
any group scheme $G$, let us denote $J^nG := J^{[0]n}G$ and $N^nG= N^{[0]n}G$.

\cblue
Hence for all $n \geq 1$ by Theorem \ref{formalcase1}, we have $N^nG \simeq J^{n-1}(N^1G)$ and hence the lateral Frobenius $\fra: N^nG \map N^{n-1}G$ is the 
associated Frobenius map of the canonical prolongation sequence.
\color{black}

\subsection{Delta Characters of Group Schemes}
We recall some basic results on $\d$-characters from Sections $7,8$ of 
\cite{BS_b} that led to the construction of a natural isocrystal $\Hd(A)$ over 
$K$, which we call the $\d$-isocrystal associated to an abelian scheme $A$. We will now show 
that $\Hd(A)$ is a non-degenerate isocrystal in the case 
when $A$ is an elliptic curve over $\bb{Z}_p$. 

Let $T^*$ be a prolongation sequence. For any $s \geq 0$, define the {\it 
shifted prolongation sequence} to be $T^{*+s}= \{T^{s+n}\}_{n=0}^\infty$.
Then a morphism $\Theta: J^nG \map \hG$ is called a {\it $\d$-character of 
$G$ of order $\leq n$}. By the universal property of jet spaces, such a 
$\Theta$ induces a morphism of prolongation sequences $\Theta: J^{*+n}G \map
\hG$.

Define a {\it $\delta$-character of order $n$}, $\Theta:G \map \hG$ 
to be a $\d$-morphism of order $n$ from $G$ to $\hG$,
which is also a group homomorphism of $\pi$-formal group schemes.
By the universal property of jet schemes as in Proposition \ref{univ},
an order $n$ $\d$-character is equivalent to a homomorphism
$\Theta:J^nG \map \hG$ of $\pi$-formal group schemes over $S$. 
We denote the group of 
$\d$-characters of order $n$ by $\bX_n(G)$:
$$
\bX_n(G)=\Hom_S(J^nG,\hG).
$$
Note that $\bX_n(G)$ comes with an
$R$-module structure via $\hG$.
For any $R$-module $M$, let us denote
$$
M_{\phi} = R\otimes_{\phi,R}M.
$$
Then recall that the $\phi$-linear map $\phi^*:\bX_{n-1}(G) \to \bX_n(G)$ induces a linear map
$\bX_{n-1}(G)_{\phi}\to \bX_n(G)$, which we will abusively also denote $\phi^*$.
We then define
$$
\bH_{n}(G) = \frac{\Hom(N^{n}G,\hG)}{i^*\phi^*(\bX_{n-1}(G)_{\phi})}.
$$
Note that 
$u:N^{n+1}G\map N^nG$ induces $u^*:\Hom(N^nG,\hG) \map \Hom(N^{n+1}G,\hG)$. 
Moreover, since $u$ commutes with both $i$ and $\phi$, we have
$$
u^*i^*\phi^*(\bX_n(G)) = i^*\phi^*u^*(\bX_n(G)) \subset i^*\phi^*(\bX_{n+1}(G)),
$$
and hence $u$ also induces a map $u^*:\bH_n(G) \map \bH_{n+1}(G)$.
Define 
\begin{equation}
	\label{bigH}
	\Hd(G)= \varinjlim \bH_n(G)
\end{equation}
where the limit is taken in the category
of $R$-modules. Similarly, $\mfrak{f}: N^{n+1}G \map N^nG$ induces 
$\mfrak{f}^*:\Hom(N^nG,\hG) \map \Hom(N^{n+1}G,\hG)$, 
which descends to a $\phi$-semilinear morphism of $R$-modules
\begin{equation}	
	\label{latfrobH}
	\mfrak{f}^*:\bH_n(G) \map\bH_{n+1}(G)
\end{equation}
because we have $\mfrak{f}^*i^*\phi^*(\bX_{n-1}(G))= 
i^*\phi^*\phi^*(\bX_{n-1}(G)) \subset i^*\phi^*\bX_{n}(G)$.
This in turn induces a $\phi$-semilinear endomorphism $\mfrak{f}^*:\Hd(G) \map \Hd(G)$. 

\cblue
Let $\mathrm{pr}_j: \hG^g \map \hG$ denote the $j$-th projection for all
$j = 1 ,\dots , g$. In Lemma $2.3$ of \cite{bui95}, Buium constructs an 
isomorphism of $\pi$-formal group schemes 
\begin{align}
\label{Psi_1}
\Psi_1 : N^1G \map \hG^g
\end{align}
that depends on a choice of \'{e}tale coordinates on $G$. Then $\Psi_1$ 
can be represented as a $g$-tuple of maps
$$
\Psi_1 = (\Psi_{11}, \dots , \Psi_{1g})
$$
where $\Psi_{1j}:= \mathrm{pr}_j \circ \Psi_1$ for all $j= 1, \dots , g$.

For all $i=1,\dots , n$ define $\Psi_i$ as the composition 
\begin{align}
\label{Psi_i}
N^nG \stk{\fra^{\circ (i-1)}}{\longrightarrow} N^{n-i+1}G 
\stk{u^{\circ (n-i)}}{\longrightarrow} N^1G \stk{\Psi_1}{\longrightarrow}
\hG^g
\end{align}
where $u:N^jG \map N^{j-1}G$ is the usual projection map induced from the 
projection map of jet schemes $u:J^jG \map J^{j-1}G$ for all $j$.
\color{black}

\begin{proposition}
	\label{diff}
	For any character $\Theta$ in $\bX_n(G)$, let the derivative at the identity
	with respect to our chosen coordinates 
	be $D\Theta = (A_0, \cdots, A_n)$ where $A_j \in \mb{Mat}_{1\times g} (R)$.
	\begin{enumerate}
		\item We have
		$$
		i^*\phi^*\Theta = \mfrak{f}^*(i^*\Theta)+ \gamma. \Psi_1,
		$$
		where $\gamma=\pi A_0$. 
		\item For $n\geq 2$, we have
		$$
		i^*(\phi^{\circ n})^*\Theta= (\mfrak{f}^{n-1})^* i^*\phi^*\Theta.
		$$
	\end{enumerate}
\end{proposition}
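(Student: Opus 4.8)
The plan is to reduce everything to the single compositional identity between the lift of Frobenius $\phi=\phi_G$ on jet spaces and the lateral Frobenius $\fra=\phi_{N^1G}$, namely $\phi^{\circ 2}\circ i=\phi\circ i\circ\fra$, which is Proposition~\ref{comm} for $m=0$ (cf.\ also (\ref{latfrobcom})). Part~(2) is the quick one: unwinding the pullbacks, $i^*(\phi^{\circ n})^*\Theta=\Theta\circ\phi^{\circ n}\circ i$ and $(\fra^{n-1})^*i^*\phi^*\Theta=\Theta\circ\phi\circ i\circ\fra^{\circ(n-1)}$ are both morphisms $N^{2n}G\to\hG$, so it is enough to prove the equality $\phi^{\circ n}\circ i=\phi\circ i\circ\fra^{\circ(n-1)}$ of morphisms $N^{2n}G\to J^nG$. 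This follows by iterating $\phi^{\circ 2}\circ i=\phi\circ i\circ\fra$ a total of $n-1$ times and then composing with $\Theta$ on the left.

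For part~(1), write $i^*\phi^*\Theta-\fra^*(i^*\Theta)=\Theta\circ\delta$, where $\delta:=\phi\circ i-i\circ\fra\colon N^{n+1}G\to J^nG$; this is a homomorphism of $\pi$-formal group schemes, being the difference of two homomorphisms into the abelian group $J^nG$, so $\Theta\circ\delta$ is a homomorphism into $\hG$. By Proposition~\ref{comm} we have $\phi\circ\delta=\phi^{\circ 2}\circ i-\phi\circ i\circ\fra=0$, so $\delta$ factors through $\ker(\phi:J^nG\to J^{n-1}G)$. Linearizing the Witt-vector Frobenius $F:W_n\to W_{n-1}$ shows that $\Lie\ker\phi$ is exactly the first summand $\Lie G$ in $\Lie J^nG=\bigoplus_{j=0}^{n}\Lie G$, the one indexed by the coordinate $\bx_0$; consequently $D\delta$ takes values there and is determined by its component $D(u\circ\delta)$, where $u:J^nG\to G$ is the projection. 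Since $R$ has characteristic zero, a homomorphism of smooth $\pi$-formal group schemes into $\hG$ is determined by its derivative at the origin, so it suffices to match the derivatives of $\Theta\circ\delta$ and $\gamma\cdot\bPsi_1$.

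This reduces part~(1) to a Witt/ghost-coordinate computation of $D(u\circ\delta)$. The contribution of $i\circ\fra$ to $u\circ\delta$ vanishes because $N^nG=\ker u$, while $u\circ\phi\circ i$ is induced on $B$-points by $W_{n+1}(B)\xrightarrow{F}W_n(B)\xrightarrow{w_0}B$, i.e.\ by the ghost component $w_1$; on a point of $N^{n+1}G$ with Witt coordinates $(0,\beta_1,\dots,\beta_{n+1})$ (the $0$-th coordinate being the marked point, normalized to $0$) this gives $w_1(0,\beta_1,\dots)=\pi\beta_1$. Hence $D(u\circ\delta)=\pi\,D\bPsi_1$, with $\bPsi_1$ the canonical order-one character (reading off the $N^1G$-coordinate $\beta_1$), and since $D\Theta$ restricted to the $\bx_0$-summand is the block $A_0$ we obtain $D(\Theta\circ\delta)=\pi A_0\,D\bPsi_1=D(\gamma\bPsi_1)$ with $\gamma=\pi A_0$; the characteristic-zero rigidity above then upgrades this to $\Theta\circ\delta=\gamma\bPsi_1$. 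I expect this last step — and in particular verifying that no higher-order corrections survive, which is where the rigidity of homomorphisms into $\hG$ over a characteristic-zero base is genuinely used — to be the only real obstacle; alternatively one can reduce to the affine case of Section~\ref{cp} and feed in the explicit formulas $\fra^i(\bx_{m+1})=\bx_{m+1}^{q^{i-1}}+\pi\bx_{m+2}^{q^{i-2}}+\cdots$ together with Lemma~\ref{phi-del}.
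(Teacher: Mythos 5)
The paper does not actually prove this proposition---its ``proof'' is a citation to Proposition 6.3 of \cite{BS_b}---so your argument is a reconstruction rather than a parallel of a written proof. Part (2) of your proposal is correct and is exactly what the paper's own machinery gives: iterate the identity $\phi^{\circ 2}\circ i=\phi\circ i\circ\fra$ (Proposition \ref{comm} with $m=0$, cf.\ (\ref{latfrobcom}) and Theorem \ref{lateral}) and compose with $\Theta$; your level bookkeeping ($N^{2n}G\map J^nG$) is right. For part (1) the skeleton is also sound: $\delta=\phi\circ i-i\circ\fra$ is additive, $\phi\circ\delta=0$, the linearization $DF_j=\pi\bx_{j+1}$ together with $\pi$-torsion-freeness of $R$ shows $D\delta$ lands in the $\bx_0$-summand and is recovered from $D(u\circ\delta)=\pi\cdot(\mbox{projection to }\bx_1)$, and the rigidity of homomorphisms into $\hG$ over a torsion-free base is a legitimate (and standard) way to upgrade an equality of derivatives to an equality of characters.

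The genuine gap is in the final chain-rule step. Both $\phi\circ i$ and $i\circ\fra$ lie over the Frobenius $\phi$ of $R$, so pulling the function $\Theta$ back along $\delta$ is $\phi$-semilinear on coefficients: computed on coordinate rings, the linear part of $i^*\phi^*\Theta-\fra^*(i^*\Theta)$ is $\pi\phi(A_0)\bx_1$, not $\pi A_0\bx_1$. Your covariant ``on points'' computation silently evaluates $\Theta$ with untwisted coefficients, which is inconsistent with the convention this paper itself uses elsewhere (e.g.\ in Section \ref{last}, $\mff^{*}i^{*}\Theta_{2}=\Psi_{3}-\phi(\lambda)\Psi_{2}+\gamma\Psi_{1}$, with $\lambda$ twisted to $\phi(\lambda)$). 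So as written your argument does not establish the stated constant $\gamma=\pi A_0$; it establishes $\gamma=\pi\phi(A_0)$ under the semilinear conventions, and whether these agree is a matter of the normalizations in \cite{BS_b} (they do agree in the paper's application, where $R=\Z_p$ and $\phi=\mathrm{id}$). A second, smaller point: you assert that $D\bPsi_1$ is the projection onto the first kernel coordinate; this is consistent with $\Psi_1=\vartheta\circ u$ and $\vartheta^{-1}(x)=\frac{1}{\pi}\exp_{\mathcal{F}}(\pi x)$ as in Section \ref{apf}, but it is a normalization imported from \cite{BS_b} that your proof should verify rather than assume, since the value of $\gamma$ depends on it.
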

\begin{proof} See Proposition $6.3$ in \cite{BS_b}.
\end{proof}	
\subsection{Finiteness of the $R$-module $\bXp(A)$ and $\bH_{\d}(A)$}
Let $A$ be a $\pi$-formal abelian scheme of relative dimension $g$ over 
$\Spf R$.
For every $n$ we have the following short exact sequence of $\pi$-formal schemes
\begin{equation}
	\label{se}
	0 \map N^n \map J^nA \map A \map 0.
\end{equation}
Applying $\Hom(-, \hG)$ to the above short exact sequence gives us
\begin{equation}
	\label{sel}
	0 \map \bX_n (A) \map \Hom(N^n,\hG) \stk{\partial}{\map} \Ext(A,\hG). 
\end{equation}
Then by the theory of extensions of groups that admit rational sections (see 
\cite{Serre}, page 185, Theorem 7) we have 
$\Ext(A,\hG)\simeq H^1(A,\Ou_A) \simeq R^g$. 
Let $\bI_n(A) := \mb{image}(\partial)$. 
Note that since for all $n$, there are maps $\Hom(N^n,\hG) \stk{u^*}{\inj} \Hom(N^{n+1},\hG)$,
we have $\bI_n(A) \subset \bI_{n+1}(A)$. Define 
\begin{equation}
	\label{bigI}
	\bI(A):= \varinjlim \bI_n(A)
\end{equation}
and 
$$
h_i= \rk \bI_i(A) - \rk \bI_{i-1}(A)
$$
for all $i \geq 1$.
We define the {\it upper splitting number} to be the smallest number 
$\mup\geq 1$ such that $h_n=0$ for all $n \geq \mup$.  
Note that $\mup$ exists since $$\bI_0(A) \subset \bI_{1}(A) \subset \cdots \subset \Ext(A,\hG)\cong R^g.$$
We define the {\it lower splitting number} to be the unique $\mlow$ satisfying
$\bX_{\mlow}(A) \ne \{ 0\}$ and $\bX_{\mlow-1}(A) = \{0\}$. 
We say a $\d$-character $\Theta \in \bX_n(A)_K$ is {\it primitive} if 
$$
\Theta \notin u^* \bX_{n-1}(A)_K + \phi^* (\bX_{n-1}(A)_K)_{\phi}.
$$
\begin{theorem}
	\label{maxorder}
	For any abelian scheme $A$ of dimension $g$, $\bX_\infty(A)_K$ is freely
	$K\{\phi^*\}$-generated by $g$ $\d$-characters of order at most $g+1$. 
\end{theorem}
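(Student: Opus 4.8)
The plan is to reduce everything to the fraction field $K=\mathrm{Frac}(R)$ and to compute $\dim_K\bX_n(A)_K$ precisely enough that both the freeness and the order bound fall out. Recall that $\bX_\infty(A)_K=\varinjlim_n\bX_n(A)_K$ is a left module over the twisted polynomial ring $K\{\phi^*\}$, where $\phi$ acts on $K$ through the fixed lift of Frobenius. Since $\phi$ is injective on $K$, the ring $K\{\phi^*\}$ is left Euclidean, hence a left principal ideal domain; in particular every submodule of a finitely generated free left $K\{\phi^*\}$-module of rank $g$ is again free, of rank $\le g$. So it suffices to realize $\bX_\infty(A)_K$ as a $K\{\phi^*\}$-submodule of a free module of rank $g$, then to pin down its rank, and finally to bound the orders of a free generating set.

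The first step is to bracket $\bX_n(A)$ between two computable modules. By Theorem \ref{formalcase}, $N^nA\simeq J^{n-1}(N^1A)$, and $N^1A\simeq\hG^{\,g}$ by Proposition~$2.2$ and Lemma~$2.3$ of \cite{bui95} (the fact already used in the proof of Theorem \ref{JetGg}); together with Lemma \ref{JetGa} this gives
$$
\Hom(N^nA,\hG)\;\simeq\;\Hom\!\big(J^{n-1}(\hG^{\,g}),\hG\big)\;=\;\bX_{n-1}(\hG)^{\,g}.
$$
Over $K$ the ghost map identifies $\bb{W}_{m,K}$ with $\hG_K^{\,m+1}$, and in characteristic $0$ every homomorphism of formal groups out of a power of $\hG_K$ is $K$-linear; hence $\dim_K\bX_m(\hG)_K=m+1$ and $\dim_K\Hom(N^nA,\hG)_K=gn$. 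Applying $\Hom(-,\hG)$ to the short exact sequence (\ref{se}) and using $\bX_0(A)=\Hom(A,\hG)=0$ yields the exact sequence (\ref{sel}),
$$
0\to\bX_n(A)\to\Hom(N^nA,\hG)\to\Ext(A,\hG)\simeq H^1(A,\Ou_A)\simeq R^{\,g},
$$
so that $\dim_K\bX_n(A)_K=gn-\dim_K\bI_n(A)_K$; since $\bI_n(A)\subseteq R^{\,g}$ has rank $\le g$ and stabilizes to $\bI(A)$ of rank $\rho:=\rk_R\bI(A)\le g$, we obtain $\dim_K\bX_n(A)_K=gn-\rho$ for all large $n$.

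Passing to the limit, the inclusions $\bX_n(A)_K\hookrightarrow\Hom(N^nA,\hG)_K=\bX_{n-1}(\hG^{\,g})_K$ assemble — once one checks they are compatible with the Frobenius pullbacks — into a $K\{\phi^*\}$-linear embedding $\bX_\infty(A)_K\hookrightarrow\bX_\infty(\hG^{\,g})_K$. The target is free of rank $g$ over $K\{\phi^*\}$: indeed $\bX_\infty(\hG)_K$ is free of rank $1$, since $\dim_K\bX_m(\hG)_K=m+1$ forces $\{\psi,\phi^*\psi,\dots,(\phi^*)^m\psi\}$ to be a $K$-basis of $\bX_m(\hG)_K$, where $\psi$ is the order-$0$ generator. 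Hence $\bX_\infty(A)_K$ is free (in particular torsion-free) over $K\{\phi^*\}$, of some rank $r\le g$; but then its order-$\le n$ part has $K$-dimension $rn+O(1)$, which by the previous paragraph equals $gn+O(1)$, so $r=g$. Finally, if $\Theta_1,\dots,\Theta_g$ is a free generating set of orders $d_1\le\cdots\le d_g$, then $\dim_K\bX_n(A)_K=\sum_{d_i\le n}(n+1-d_i)=gn+g-\sum_i d_i$ for $n$ large; comparing with $gn-\rho$ gives $\sum_i d_i=g+\rho$, and since each $d_i\ge\mlow\ge1$ (as $\bX_0(A)=0$) this forces $d_g\le\rho+1\le g+1$.

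The one nonformal ingredient — and the main obstacle — is the claim that the identification $\Hom(N^nA,\hG)\simeq\bX_{n-1}(\hG^{\,g})$ intertwines the two Frobenius operators, on the left the pullback along the lateral Frobenius $\fra=\phi_{N^1A}$ and on the right the pullback $\phi^*$ on characters of $\hG^{\,g}$, and equivalently that $\bX_\infty(A)_K$ is torsion-free over $K\{\phi^*\}$. This is exactly where Proposition \ref{diff} and the compatibilities of Section \ref{kj} enter: Proposition \ref{diff}(1) shows that $i^*\phi^*\Theta$ and $\fra^*(i^*\Theta)$ differ by the explicit term $\pi A_0\cdot\bPsi_1$, which one must check is absorbed after one further application of $\phi^*$, while Proposition \ref{diff}(2) supplies the clean intertwining in higher order. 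Controlling this correction term is the technical heart of the argument (it is the content of Buium's original proof); everything else is the dimension bookkeeping above.
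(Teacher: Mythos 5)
First, note that the paper does not actually prove Theorem \ref{maxorder}: its ``proof'' is a citation to Theorem 1.1 of \cite{BS_b} and Theorem B of \cite{bui95}. Your sketch is in the spirit of the Borger--Saha argument (exact sequence \eqref{sel}, the identification $N^nA\simeq J^{n-1}(N^1A)\simeq J^{n-1}(\hG^{\,g})$ giving $\Hom(N^nA,\hG)\simeq \bX_{n-1}(\hG)^g$ of rank $gn$, and dimension bookkeeping over the skew PID $K\{\phi^*\}$), but as a standalone proof it has two genuine gaps. The first you name yourself and then defer: the restriction map $i^*$ does \emph{not} intertwine $\phi^*$ with $\fra^*$ (Proposition \ref{diff}(1) gives the correction $\gamma\,\bPsi_1$), so your ``realize $\bX_\infty(A)_K$ as a $K\{\phi^*\}$-submodule of a free rank-$g$ module'' step is not established; saying the correction ``must be checked to be absorbed'' and that this ``is the content of Buium's original proof'' is an admission, not an argument. (The fix is available with the paper's tools: use $j:=i^*\circ\phi^*$ instead of $i^*$; Proposition \ref{diff}(2) with $n=2$ gives $j(\phi^*\Theta)=\fra^*j(\Theta)$, $j$ is injective since $i^*$ and $\phi^*$ are, and functoriality of canonical prolongation sequences identifies $\fra^*$ on $\varinjlim\Hom(N^nA,\hG)$ with $\phi^*$ on $\bX_\infty(\hG)^g$. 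But this has to be said, and you did not say it.)

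The second gap is in the bookkeeping itself, which you use twice: the identity $\dim_K\bX_n(A)_K=\sum_{d_i\le n}(n+1-d_i)$ for a free generating set of orders $d_1\le\cdots\le d_g$ is \emph{not} a formal consequence of freeness over $K\{\phi^*\}$. A priori a character of order $\le n$ could be $\sum_i P_i(\phi^*)\Theta_i$ with some $\deg P_i+d_i>n$ and cancellation of the top-order terms, so the order filtration need not be adapted to an arbitrary free basis; without ruling this out, neither the conclusion $r=g$ from the growth rate nor the relation $\sum_i d_i=g+\rho$ (hence the bound $d_g\le g+1$) follows. Ruling it out is exactly the leading-term/graded analysis of \cite{BS_b} (injectivity of $\phi^*$ and $i^*$ on the quotients $\bX_n(A)/\bX_{n-1}(A)$, cf.\ Proposition 6.4 there, or the paper's Theorem \ref{phi*_bij} for $n\geq \mup$), and it is the same mechanism that lets one choose generators of minimal orders. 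So the skeleton is right and close to the cited proof, but the two steps that carry the content --- the equivariant embedding and the adaptedness of the order filtration --- are precisely the ones left unproved.
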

\begin{proof}
	See Theorem $1.1$ in \cite{BS_b} and Theorem B in \cite{bui95}.
\end{proof}

Now define $$\bXp(A):= \varinjlim \bX_n(A)/\phi^*\bX_{n-1}(A)_{\phi}.$$

\begin{corollary}
	\label{xprim}
	We have dimension of $\bXp(A)_K$ as a $K$-vector space to be $g$, and
	$$\bXp(A)_K\simeq\bX_{\mup}(A)_K/\phi^* (\bX_{\mup-1}(A)_K)_{\phi} .$$
\end{corollary}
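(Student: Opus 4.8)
The plan is to prove Corollary \ref{xprim} by carefully analyzing the direct system $\{\bX_n(A)/\phi^*\bX_{n-1}(A)_\phi\}_n$ using the splitting numbers $\mup, \mlow$ and the characterization of primitive characters, then deduce the dimension count from Theorem \ref{maxorder}. First I would unwind the definition: the transition maps in the system defining $\bXp(A)$ are induced by $u^*$, so $\bXp(A) = \varinjlim_n \bX_n(A)/\phi^*(\bX_{n-1}(A))_\phi$, and after tensoring with $K$ the functor $\varinjlim$ is exact, giving $\bXp(A)_K = \varinjlim_n \bX_n(A)_K/\phi^*(\bX_{n-1}(A)_K)_\phi$. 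The key claim will be that this direct system stabilizes at $n = \mup$, i.e.\ that for $n \geq \mup$ the transition map $\bX_n(A)_K/\phi^*(\bX_{n-1}(A)_K)_\phi \to \bX_{n+1}(A)_K/\phi^*(\bX_n(A)_K)_\phi$ is an isomorphism.

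To establish stabilization I would argue in two directions. For injectivity of the transition maps (for all $n$), suppose $\Theta \in \bX_n(A)_K$ maps into $\phi^*(\bX_n(A)_K)_\phi$ inside $\bX_{n+1}(A)_K$; since $u^*$ is the inclusion and $\phi^*$ raises order by one, a degree/order bookkeeping argument (tracking the top coordinate $A_n$ of $D\Theta$ via Proposition \ref{diff}) forces $\Theta \in \phi^*(\bX_{n-1}(A)_K)_\phi$ already. For surjectivity when $n \geq \mup$: from the exact sequence (\ref{sel}), $\bX_n(A)_K$ fits into $0 \to \bX_n(A)_K \to \Hom(N^n,\hG)_K \to \bI_n(A)_K \to 0$, and for $n \geq \mup$ we have $\bI_n(A)_K = \bI_{n-1}(A)_K = \bI(A)_K$, so any new character of order $n+1$ that is primitive would have to come from a genuinely new extension class, contradicting $h_{n+1} = 0$; hence every class in $\bX_{n+1}(A)_K$ is, modulo $\phi^*(\bX_n(A)_K)_\phi$, already hit from level $n$. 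This gives the isomorphism $\bXp(A)_K \simeq \bX_{\mup}(A)_K/\phi^*(\bX_{\mup-1}(A)_K)_\phi$.

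Finally, for the dimension: by Theorem \ref{maxorder} the module $\bX_\infty(A)_K$ is freely generated over the (noncommutative) polynomial ring $K\{\phi^*\}$ by $g$ characters $\Theta_1,\dots,\Theta_g$. Quotienting $\bX_\infty(A)_K = \bigoplus_{j=1}^g K\{\phi^*\}\Theta_j$ by $\phi^*(\bX_\infty(A)_K)_\phi = \bigoplus_{j=1}^g \phi^*K\{\phi^*\}\Theta_j$ leaves exactly $\bigoplus_{j=1}^g K\cdot\Theta_j$, a $K$-vector space of dimension $g$; and $\varinjlim_n \bX_n(A)_K/\phi^*(\bX_{n-1}(A)_K)_\phi = \bX_\infty(A)_K/\phi^*(\bX_\infty(A)_K)_\phi$ by commuting the colimit past the quotient. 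Hence $\dim_K \bXp(A)_K = g$.

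I expect the main obstacle to be the surjectivity/stabilization step: one must make precise, using the short exact sequence (\ref{sel}), Proposition \ref{diff}, and the definition of $\mup$ via $h_n = 0$, that no primitive $\d$-character of order exceeding $\mup$ survives in the quotient by $\phi^*(\bX_{\bullet}(A)_K)_\phi$ — in other words, that "primitive of order $> \mup$" is incompatible with the splitting numbers vanishing. The bookkeeping tying the order of a character to the rank jump $h_n$ of the image in $\Ext(A,\hG)$ is where the real content lies; the rest is formal manipulation of direct limits and the free $K\{\phi^*\}$-module structure.
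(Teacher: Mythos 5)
Your overall strategy (stabilize the direct system at level $\mup$, then count dimensions via the free $K\{\phi^*\}$-module structure of Theorem \ref{maxorder}) is the right one — it mirrors the integral arguments the paper itself runs in Theorem \ref{phi*_bij} and Theorem \ref{X_prim}, while the paper disposes of this corollary by citing Corollary 7.7 of \cite{BS_b}. The colimit manipulation and the final dimension count ($\bX_\infty(A)_K/\phi^*(\bX_\infty(A)_K)_\phi\simeq\bigoplus_j K\Theta_j$) are fine. The problem is the stabilization step, which you yourself flag as ``where the real content lies'' but then only gesture at, and the mechanism you propose for it is not valid: a primitive $\d$-character of order $n+1$ does \emph{not} ``come from a genuinely new extension class'' — by \eqref{sel} every character lies in the kernel of $\partial$, so primitivity of a character produces nothing in $\Ext(A,\hG)$, and the hypothesis $h_{n+1}=0$ says nothing about $\bX_{n+1}(A)$ until it is combined with structural facts about $\Hom(N^{n+1},\hG)$. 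The argument that actually works (and is what the paper does integrally in Theorem \ref{phi*_bij}) needs two external inputs you never invoke: that $\Hom(N^{n},\hG)/\Hom(N^{n-1},\hG)$ is free of rank $g$ (Prop.\ 5.2 of \cite{BS_b}) and that $\mathfrak{f}^*$ induces a bijection between consecutive such quotients (Prop.\ 6.4 of \cite{BS_b}); then $h_n=h_{n+1}=0$ makes $i^*:\bX_{n+1}(A)_K/\bX_n(A)_K\to\Hom(N^{n+1},\hG)_K/\Hom(N^n,\hG)_K$ an isomorphism, and Proposition \ref{diff} (which identifies $i^*\phi^*$ with $\mathfrak{f}^*i^*$ modulo $\Psi_1$) transports bijectivity of $\mathfrak{f}^*$ to surjectivity of $\phi^*$ on $\bX_{n+1}(A)_K/u^*\bX_n(A)_K$, killing primitives above $\mup$. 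Without these inputs your surjectivity claim does not follow; note also that stabilization exactly at $\mup$ (e.g.\ at level $1$ in the CL elliptic case) cannot be extracted from Theorem \ref{maxorder} alone, which only controls orders up to $g+1$.

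A secondary gap of the same nature: your injectivity step rests on the assertion that $\phi^*$ raises order by exactly one, i.e.\ that $\phi^*\Xi\in u^*\bX_n(A)_K$ forces $\Xi\in u^*\bX_{n-1}(A)_K$. This is true but is not mere ``degree bookkeeping'' on $D\Theta$; it again comes from restricting to $N^{n+1}$ and using Proposition \ref{diff} together with the injectivity of $\mathfrak{f}^*$ quoted from \cite{BS_b}. So the proposal is a reasonable plan whose two load-bearing steps are exactly the ones left unproved (and, in the surjectivity case, justified by an incorrect heuristic); as written it does not constitute a proof of the corollary.
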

\begin{proof}
	See Corollary $7.7$ in \cite{BS_b}.
\end{proof}

\begin{corollary}
	\label{mell}
	If $g=1$, then $\mlow= \mup =: m$ and $\bXp(A)_K \simeq \bX_m(A)_K$.
\end{corollary}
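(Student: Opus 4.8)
The plan is to derive this as an immediate specialization of the two preceding corollaries together with Theorem \ref{maxorder}, using the hypothesis $g=1$ to collapse the general picture into a single index. First I would invoke Corollary \ref{xprim}, which already gives $\bXp(A)_K \simeq \bX_{\mup}(A)_K/\phi^*(\bX_{\mup-1}(A)_K)_\phi$ and $\dim_K \bXp(A)_K = g = 1$. The goal is therefore twofold: (i) show $\mlow = \mup$, and (ii) show that in this situation $\phi^*(\bX_{m-1}(A)_K)_\phi = 0$, so that the quotient defining $\bXp(A)_K$ is just $\bX_m(A)_K$ itself.

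For step (i), I would argue by counting ranks along the filtration. By Theorem \ref{maxorder}, $\bX_\infty(A)_K$ is freely generated as a $K\{\phi^*\}$-module by exactly $g=1$ $\d$-character, say $\Theta$, of order $\leq g+1 = 2$. Since $\bX_{\mlow-1}(A)_K = 0$ and $\bX_{\mlow}(A)_K \neq 0$, the generator $\Theta$ must have order exactly $\mlow$; moreover every nonzero element of $\bX_\infty(A)_K$ is (up to $K$-scalars) of the form $P(\phi^*)\Theta$ for some nonzero $P \in K\{\phi^*\}$, and applying $\phi^*$ raises order by exactly one (each application of the Frobenius pullback increases the order of a $\d$-character by one, and no nontrivial relations occur by freeness). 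Hence $\bX_n(A)_K$ has $K$-dimension $n - \mlow + 1$ for $n \geq \mlow$ and is zero for $n < \mlow$. Then $h_n = \rk \bI_n(A) - \rk \bI_{n-1}(A)$; from the exact sequence (\ref{sel}) we have $\rk \Hom(N^n,\hG)_K = \rk \bX_n(A)_K + \rk \bI_n(A)$, and since $\mup$ is the stabilization index of $\{\bI_n(A)\}$, the jumps in $\bX_n(A)_K$ must all occur at the single index $\mlow$ — i.e. $\bX_n(A)_K$ grows in dimension precisely when new non-split extensions stop appearing, forcing $\mlow = \mup$. I would make this precise by noting that for $g=1$ there is room for only one "jump," so the lone index where $\bX_\bullet$ becomes nonzero must coincide with the lone index past which $\bI_\bullet$ is stable.

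For step (ii), with $m := \mlow = \mup$, we have $\bX_{m-1}(A)_K = 0$ by definition of $\mlow$, so $\phi^*(\bX_{m-1}(A)_K)_\phi = 0$, and therefore Corollary \ref{xprim} gives $\bXp(A)_K \simeq \bX_m(A)_K/\{0\} = \bX_m(A)_K$, which is what we wanted. The main obstacle I anticipate is step (i): pinning down rigorously that the single dimension-jump of $\bX_\bullet(A)_K$ occurs at exactly the same place as the stabilization of $\bI_\bullet(A)$ when $g=1$. This should follow cleanly from the rank bookkeeping in the long exact sequence obtained by applying $\Hom(-,\hG)$ to (\ref{se}) and tensoring with $K$, combined with the freeness statement of Theorem \ref{maxorder} which controls $\dim_K \bX_n(A)_K$ exactly; everything else is a formal consequence of the $g=1$ hypothesis.
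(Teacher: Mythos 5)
Your argument is correct in outline, but note that the paper itself does not prove Corollary \ref{mell}; it simply cites Corollary 7.8 of \cite{BS_b}, so what you have written is a self-contained reconstruction rather than a parallel of an in-text proof. Your step (ii) is exactly right and is the only thing the statement needs once $\mlow=\mup$ is known: with $m:=\mlow=\mup$ one has $\bX_{m-1}(A)_K=0$, so the quotient in Corollary \ref{xprim} is $\bX_m(A)_K$ itself. For step (i), your rank bookkeeping does work, but be careful about where the inputs come from. The identity $\rk_K\Hom(N^n,\hG)_K=\rk_K\bX_n(A)_K+\rk_K\bI_n(A)_K$ follows from \eqref{sel}, but the crucial fact that $\rk_K\Hom(N^n,\hG)_K$ grows by exactly $g=1$ at each step is \emph{not} a consequence of that exact sequence, contrary to what your closing sentence suggests; it is a structural input, available either as Proposition 5.2 of \cite{BS_b} (already invoked in the proof of Theorem \ref{phi*_bij}) or via $N^nA\simeq(\bb{W}_{n-1})^g$ from Corollary \ref{JetG} together with the basis $\Psi_1,\dots,\Psi_n$. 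Likewise, the assertion that $\phi^*$ raises the exact order of a character by one (so that $\dim_K\bX_n(A)_K=n-\mlow+1$ for $n\geq\mlow$ by freeness, Theorem \ref{maxorder}) deserves a justification: it follows from the injectivity of $i^*$ together with Proposition \ref{diff}, since $\mathfrak{f}^*$ shifts the $\Psi$-components upward, and a character has exact order $n$ precisely when $i^*\Theta$ has nonzero $\Psi_n$-component. Granting these two facts, your computation gives $\rk\bI_n(A)=n$ for $n<\mlow$ and $\rk\bI_n(A)=\mlow-1$ for $n\geq\mlow$, hence $h_n=1$ for $n\leq\mlow-1$ and $h_n=0$ for $n\geq\mlow$, which is exactly $\mup=\mlow$; so the plan closes, and it has the merit of making explicit the dimension count that the paper leaves to the reference.
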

\begin{proof}
	See Corollary $7.8$ in \cite{BS_b}.
\end{proof}

We will, in fact, show that the module of $\d$-characters is finitely generated as an $R\{\phi^{*}\}$-module. As a consequence, it gives an integral lattice 
of the $\d$-isocrystal. 

\begin{theorem}
\label{phi*_bij}
For $n\geq m_{u}$, we have the following commutative diagram
	$${
\xymatrix{
	\bX_{n}(A)/\bX_{n-1}(A)\ar[d]_-{i^{*}} \ar[r]^{\phi^{*}} &
\bX_{n+1}(A)/\bX_{n}(A)\ar[d]^{i^{*}}  \\
	\Hom(N^n,\hG)/\Hom(N^{n-1},\hG) \ar[r]^{\mathfrak{f}^{*}} &\Hom(N^{n+1},\hG)/\Hom(N^n,\hG) }}
	$$
\cblue
with all the maps as isomorphisms of $R$-modules. 
\color{black}
\end{theorem}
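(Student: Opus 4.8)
**Proof proposal for Theorem \ref{phi*_bij}.**

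The plan is to establish the two bottom isomorphisms and the commutativity separately, then read off the top isomorphism from the diagram. First I would analyze the bottom row. Using Theorem \ref{formalcase} in the case $m=0$, we have $N^nG \simeq J^{n-1}(N^1G)$ with $\mfrak{f}$ identified with the canonical lift of Frobenius $\phi_{N^1G}$ on this prolongation sequence. Hence $\Hom(N^nG,\hG) = \Hom(J^{n-1}(N^1G),\hG)$ is, by definition, the group $\bX_{n-1}(N^1G)$ of $\d$-characters of the group scheme $N^1G$ (via the universal property of jet spaces as in \eqref{univ}), and $\mfrak{f}^*$ becomes the map $\phi^*$ on $\d$-characters of $N^1G$. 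Since $N^1A \simeq (\hG)^g$ (by Proposition 2.2 and Lemma 2.3 in \cite{bui95}, as used in the proof of Theorem \ref{JetGg}), the quotients $\Hom(N^nG,\hG)/\Hom(N^{n-1}G,\hG)$ are the successive graded pieces of the filtered module of $\d$-characters of $(\hG)^g$; these are computed explicitly from the Witt-coordinate description, and $\phi^* = \mfrak{f}^*$ sends the $(n-1)$-st graded piece isomorphically onto the $n$-th one because on $\d$-characters of $\hG$ the pullback by Frobenius is the shift operator, which on each graded quotient is a bijection. This gives the bottom horizontal isomorphism for every $n\geq 1$.

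For the top row, recall from Corollary \ref{xprim} that $\bXp(A)_K \simeq \bX_{\mup}(A)_K/\phi^*(\bX_{\mup-1}(A)_K)_\phi$, and the construction of $\mup$ as the upper splitting number means that for $n \geq \mup$ the map $\partial: \Hom(N^nA,\hG) \to \Ext(A,\hG)$ has stabilized image $\bI_n(A) = \bI(A)$; combined with the exact sequence \eqref{sel} this forces $\bX_n(A)/\bX_{n-1}(A) \hookrightarrow \Hom(N^nA,\hG)/\Hom(N^{n-1}A,\hG)$ and shows $\phi^*: \bX_{n-1}(A)/\bX_{n-2}(A) \to \bX_n(A)/\bX_{n-1}(A)$ becomes surjective (there is nothing new coming from the $\Ext$ side past level $\mup$), and injectivity of this $\phi^*$ on graded pieces follows from the freeness of $\bX_\infty(A)$ over $R\{\phi^*\}$ up to torsion, i.e.\ from Theorem \ref{maxorder} together with the fact that $\phi$ is injective on $R$. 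So $\phi^*$ on the top is an isomorphism for $n \geq \mup$.

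The commutativity of the square is the compositional identity: $i^*$ is induced by the inclusion $i: N^nA \hookrightarrow J^nA$, and we must check $\mfrak{f}^* \circ i^* = i^* \circ \phi^*$ on $\bX_n(A)/\bX_{n-1}(A)$. At the level of characters this is exactly Proposition \ref{diff}(1): for $\Theta \in \bX_n(A)$ we have $i^*\phi^*\Theta = \mfrak{f}^*(i^*\Theta) + \gamma\cdot\bPsi_1$ with $\gamma = \pi A_0$, and the error term $\gamma\cdot\bPsi_1$ lies in the image of $\Hom(N^{n-1},\hG)$ inside $\Hom(N^n,\hG)$, hence vanishes in the quotient $\Hom(N^nA,\hG)/\Hom(N^{n-1}A,\hG)$; for $n\geq 2$ one could alternatively cite Proposition \ref{diff}(2). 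Given commutativity and the two bottom isomorphisms plus surjectivity on top, a diagram chase gives that the top $\phi^*$ is injective (from injectivity of $\mfrak{f}^*\circ i^*$) and all four maps are then forced to be bijective.

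The main obstacle I expect is the last point: showing that the vertical $i^*$ maps are injective on graded pieces and that no extra $\d$-characters of $N^1A$ fail to come from $\d$-characters of $A$ past level $\mup$, i.e.\ controlling the cokernel of $i^*$ well enough to run the diagram chase cleanly. This is where the interplay between the splitting numbers $\mlow, \mup$, the exact sequence \eqref{sel}, and the $R\{\phi^*\}$-structure theorem (Theorem \ref{maxorder}) has to be used carefully; everything else is either a direct consequence of Theorem \ref{formalcase} or a term-by-term check via Proposition \ref{diff}.
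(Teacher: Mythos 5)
Your skeleton (commutativity of the square modulo lower-order terms via Proposition \ref{diff}(1), injectivity of the vertical $i^*$ and of $\phi^*$, bijectivity of $\mathfrak{f}^*$ on the graded pieces, then a diagram chase) matches the paper, which simply cites Propositions 6.4 and 5.2 of \cite{BS_b} for those inputs. The genuine gap is the one you yourself flag at the end and never close: the surjectivity input needed to run the chase. You assert that ``the construction of $\mup$ \dots means that for $n\geq \mup$ the map $\partial$ has stabilized image $\bI_n(A)=\bI(A)$,'' but $\mup$ is defined only by \emph{rank} stabilization ($h_n=\rk\bI_n(A)-\rk\bI_{n-1}(A)=0$), and over a discrete valuation ring an inclusion of submodules of equal rank need not be an equality (e.g.\ $\pi M\subset M$). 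So the equality $\bI_n(A)=\bI_{n-1}(A)$, equivalently the surjectivity of the right-hand $i^*$ (or your alternative claim that the top $\phi^*$ is surjective because ``nothing new comes from the $\Ext$ side past level $\mup$''), is exactly the content to be proved, and your argument for it is circular. Without it, commutativity plus $\mathfrak{f}^*$ bijective and the two injectivities do not force bijectivity: all four images could be proper.

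The paper closes this by a torsion-freeness argument you are missing. Since $\bI_n(A)$ is free, the sequence $0\map \bX_n(A)\map \Hom(N^n,\hG)\stackrel{\partial}{\map}\bI_n(A)\map 0$ splits, and hence the induced graded sequence
$$0\map \bX_n(A)/\bX_{n-1}(A)\map \Hom(N^n,\hG)/\Hom(N^{n-1},\hG)\map \bI_n(A)/\bI_{n-1}(A)\map 0$$
is split exact; the middle term is free of rank $g$ (Proposition 5.2 of \cite{BS_b}), so $\bI_n(A)/\bI_{n-1}(A)$ is torsion-free, hence free, and its rank is $h_n=0$ for $n\geq\mup$, so it vanishes. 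This gives bijectivity of the vertical $i^*$ maps, and only then does the commutative square with $\mathfrak{f}^*$ bijective force the top $\phi^*$ to be bijective. A secondary caveat: your injectivity argument for the top $\phi^*$ leans on Theorem \ref{maxorder}, which is a statement over $K$, and on the integral $R\{\phi^*\}$-freeness, which in this paper is deduced \emph{after} (and from) the present theorem (Corollary \ref{fin-gen}), so as written it risks circularity; the paper instead takes injectivity of $i^*$ and $\phi^*$ directly from Proposition 6.4 of \cite{BS_b}.
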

\proof From Proposition 6.4 in \cite{BS_b} we already have $i^{*}$ and $\phi^{*}$ are injective and $\mathfrak{f}^{*}$ is bijective. Consider the exact sequence of finite free $R$-modules
$$0 \longrightarrow \bX_{n}(A)\longrightarrow \Hom(N^n,\hG)\xrightarrow{\partial} \bI_{n}(A) \longrightarrow 0.$$
Since $\bI_{n}(A)$ is free, there is a section $s_{n}:\bI_{n}(A)\longrightarrow \Hom(N^n,\hG)$. Moreover, this section implies that the following exact sequence of $R$-modules
$${
	\xymatrix{
		0\ar[r]&\bX_{n}(A)/\bX_{n-1}(A) \ar[r]^{i^{*}} &\frac{\Hom(N^n,\hG)}{\Hom(N^{n-1},\hG)}\ar[r]^{\partial} &\bI_{n}A/\bI_{n-1}A\ar@/^1.2pc/[l]^{s}\ar[r]& 0 \\
}}
$$
splits. By Proposition 5.2 in \cite{BS_b} we have $\Hom(N^n,\hG)/\Hom(N^{n-1},\hG)$ is a free module of rank $g$. Hence $\bI_{n}A/\bI_{n-1}A$ is also free being a sub-module of a free module over a discrete valuation ring. On the other hand,  $$\mathrm{rk}\left(\bI_{n}A/\bI_{n-1}A\right)=\mathrm{rk}(\bI_{n}A)-\mathrm{rk} (\bI_{n-1}A)=h_{n}.$$
Since $h_{n}=0$ for all $n\geq m_{u}$, we have $\bI_{n}A/\bI_{n-1}A=0$ and $i^{*}$ is bijective. Now all the maps are bijective, that will make $\phi^{*}$ also bijective. 
\qed

Let us define a finite subset  $\mathbb{D}_{i}\subset \bX_{i}(A)$ such that the image of its elements in $$\dfrac{\bX_{i}(A)}{u^{*}\bX_{i-1}(A)+\phi^{*}(\bX_{i-1}(A))_{\phi}}$$ are distinct and form a minimal generating set. Since $R$ is a discrete valuation ring and the modules are finite, the cardinality of such a minimal generating set is well-defined.
Such a $\mathbb{D}_{i}$ is called a \emph{primitive generating set} of $\bX_{i}(A)$. 
Let us define the subset $$S_{n}(\mathbb{D}_{i})=\{{\phi^{*}}^{h}\Theta ~|~\Theta\in \mathbb{D}_{i}, 0\leq h\leq n-i \} .$$
\begin{lemma} The  $R$-module $\bX_{n}(A)$ can be generated by
	$$ S_{n}(\mathbb{D}_{1})\cup S_{n}(\mathbb{D}_{2})\cup \ldots \cup S_{n}(\mathbb{D}_{n}) .$$
\end{lemma}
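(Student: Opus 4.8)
The plan is to prove the lemma by induction on $n$, using the filtration of $\bX_n(A)$ by the submodules $\bX_i(A)$ together with the description of the successive quotients provided by Theorem \ref{phi*_bij}. The base cases $n=1,\dots,m_u$ are immediate: by definition of a primitive generating set, $\mathbb{D}_i$ generates $\bX_i(A)$ modulo $u^*\bX_{i-1}(A)+\phi^*(\bX_{i-1}(A))_\phi$, and since $\phi^*\bX_{i-1}(A)_\phi$ is already contained in the span of the $\phi^*$-orbits of generators of $\bX_{i-1}(A)$, an easy induction shows that $S_i(\mathbb{D}_1)\cup\cdots\cup S_i(\mathbb{D}_i)$ generates $\bX_i(A)$ for $i\le m_u$.

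For the inductive step, suppose $n>m_u$ and that the claim holds for $n-1$, i.e. $\bX_{n-1}(A)$ is generated by $S_{n-1}(\mathbb{D}_1)\cup\cdots\cup S_{n-1}(\mathbb{D}_{n-1})$. First I would observe that since $n>m_u$, we have $\mathbb{D}_n=\emptyset$ (because $h_n=0$ forces $\bX_n(A)=u^*\bX_{n-1}(A)+\phi^*(\bX_{n-1}(A))_\phi$, so there is nothing new to add at level $n$); thus $S_n(\mathbb{D}_n)=\emptyset$ and the asserted generating set is $S_n(\mathbb{D}_1)\cup\cdots\cup S_n(\mathbb{D}_{n-1})$. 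Now consider the exact sequence
\[
0 \longrightarrow \bX_{n-1}(A) \longrightarrow \bX_n(A) \longrightarrow \bX_n(A)/\bX_{n-1}(A) \longrightarrow 0 .
\]
By Theorem \ref{phi*_bij}, for $n-1\ge m_u$ the map $\phi^*:\bX_{n-1}(A)/\bX_{n-2}(A)\longrightarrow \bX_n(A)/\bX_{n-1}(A)$ is an isomorphism. Iterating, $\bX_n(A)/\bX_{n-1}(A)$ is the image under $(\phi^*)^{n-m_u}$ of $\bX_{m_u}(A)/\bX_{m_u-1}(A)$, which by the base case is generated by the images of $(\phi^*)^{m_u-i}\Theta$ for $\Theta\in\mathbb{D}_i$, $1\le i\le m_u$. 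Hence $\bX_n(A)/\bX_{n-1}(A)$ is generated by the images of $(\phi^*)^{n-i}\Theta$, $\Theta\in\mathbb{D}_i$, $1\le i\le m_u$, all of which lie in $S_n(\mathbb{D}_i)$.

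Combining: any $x\in\bX_n(A)$ can be written, modulo $\bX_{n-1}(A)$, as an $R$-combination of elements of $\bigcup_{i\le m_u} S_n(\mathbb{D}_i)$; subtracting off such a combination leaves an element of $\bX_{n-1}(A)$, which by the inductive hypothesis lies in the span of $\bigcup_{i\le n-1}S_{n-1}(\mathbb{D}_i)$, and since $S_{n-1}(\mathbb{D}_i)\subseteq S_n(\mathbb{D}_i)$ this completes the induction. The one point requiring a little care — and the only real obstacle — is the compatibility of the $\phi^*$-orbits: I must make sure that a generator of $\bX_n(A)/\bX_{n-1}(A)$ produced as $(\phi^*)^{n-m_u}$ of a class in $\bX_{m_u}(A)/\bX_{m_u-1}(A)$ can indeed be represented by an honest element $(\phi^*)^{n-i}\Theta$ of $\bX_n(A)$ with $\Theta\in\mathbb{D}_i$ and $n-i\le n-i$, i.e. that the exponent bound defining $S_n(\mathbb{D}_i)$ is respected; this follows because $\Theta\in\mathbb{D}_i\subseteq\bX_i(A)$ and $0\le n-i\le n-i$ trivially, so $(\phi^*)^{n-i}\Theta\in S_n(\mathbb{D}_i)$ as required. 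I would also note explicitly that $\phi^*\bX_{n-1}(A)_\phi$ contributes nothing new, since $\phi^*$ of any generator of $\bX_{n-1}(A)$ from $S_{n-1}(\mathbb{D}_i)$ is again of the form $(\phi^*)^{h+1}\Theta\in S_n(\mathbb{D}_i)$.
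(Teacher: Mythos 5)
Your argument is correct, but it is a roundabout version of what the paper does more simply and uniformly. The paper's proof is exactly your ``base case'' computation, run for every $n$ at once: writing $\mathbb{S}_{i}=S_{i}(\mathbb{D}_{1})\cup\cdots\cup S_{i}(\mathbb{D}_{i})$, the quotient $\bX_{n}(A)/\bigl(u^{*}\bX_{n-1}(A)+\phi^{*}(\bX_{n-1}(A))_{\phi}\bigr)$ is generated by the image of $\mathbb{D}_{n}$ by the very definition of a primitive generating set, so the induction hypothesis gives that $\bX_{n}(A)$ is generated by $\mathbb{D}_{n}\cup\mathbb{S}_{n-1}\cup\phi^{*}(\mathbb{S}_{n-1})\subset\mathbb{S}_{n}$. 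No case split at $m_{u}$ and no appeal to Theorem \ref{phi*_bij} is needed; the lemma is purely formal. Your separate treatment of $n>m_{u}$ via the iterated isomorphisms $\phi^{*}\colon\bX_{j}(A)/\bX_{j-1}(A)\to\bX_{j+1}(A)/\bX_{j}(A)$ does go through (the semilinearity is harmless since $\phi(R)\subseteq R$, and only the top-order elements $(\phi^{*})^{m_{u}-i}\Theta$ survive in $\bX_{m_{u}}(A)/\bX_{m_{u}-1}(A)$, as you say), but it imports Theorem \ref{phi*_bij}, a much heavier input than the statement requires.

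The one genuine soft spot is your parenthetical claim that $h_{n}=0$ ``forces'' $\bX_{n}(A)=u^{*}\bX_{n-1}(A)+\phi^{*}(\bX_{n-1}(A))_{\phi}$, hence $\mathbb{D}_{n}=\emptyset$, for $n>m_{u}$. That implication is not immediate from the definition of $h_{n}$: it is precisely the argument given later in the proof of Theorem \ref{X_prim} (the induced map $\bar{\phi}^{*}$ on the primitive quotients is surjective by Theorem \ref{phi*_bij} and is simultaneously the zero map, so the quotient vanishes). Invoking it here is not circular, since that argument does not use the present lemma, but as written you assert it without proof. Fortunately it is also unnecessary: the asserted generating set already contains $S_{n}(\mathbb{D}_{n})=\mathbb{D}_{n}$, so you may simply keep $\mathbb{D}_{n}$ among the generators, as the paper does, and delete the claim; the rest of your inductive step is unaffected.
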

\proof We will proceed by induction on $n$. For $n=1$ is clear from the definition. Let us denote $$ \mathbb{S}_{i}=S_{i}(\mathbb{D}_{1})\cup S_{i}(\mathbb{D}_{2})\cup \ldots \cup S_{i}(\mathbb{D}_{i}).$$ We assume that $\mathbb{S}_{n-1}$ generates $\bX_{n-1}(A)$ as $R$-module. Consider the short exact sequence of  $R$-modules
$$0\longrightarrow u^{*}\bX_{n-1}(A)+\phi^{*}(\bX_{n-1}(A))_{\phi}\longrightarrow\bX_{n}(A)\longrightarrow\dfrac{\bX_{n}(A)}{u^{*}\bX_{n-1}(A)+\phi^{*}(\bX_{n-1}(A))_{\phi}}\longrightarrow 0.$$
Invoking induction hypothesis, we have $\bX_{n}(A)$ is generated by $$\mathbb{D}_{n}\cup \mathbb S_{n-1}\cup \phi^{*}(\mathbb{S}_{n-1})\subset \mathbb{S}_{n}.$$
\qed


\begin{theorem}\label{X_prim} The $R$-module $\bX_{\mathrm{prim}}(A)$ is free of rank $g$.
\end{theorem}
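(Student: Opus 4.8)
The plan is to combine the structural results already established in this section: Corollary \ref{xprim}, which identifies $\bXp(A)_K \simeq \bX_{\mup}(A)_K/\phi^*(\bX_{\mup-1}(A)_K)_\phi$ as a $g$-dimensional $K$-vector space, together with Theorem \ref{phi*_bij}, which says that for $n \geq \mup$ the map $\phi^*: \bX_n(A)/\bX_{n-1}(A) \to \bX_{n+1}(A)/\bX_n(A)$ is a bijection of finite free $R$-modules, each of rank $g$ (the rank being $g$ because $\Hom(N^n,\hG)/\Hom(N^{n-1},\hG)$ has rank $g$ by Proposition 5.2 of \cite{BS_b} and $i^*$ is bijective in that range). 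First I would observe that $\bXp(A) = \varinjlim \bX_n(A)/\phi^*\bX_{n-1}(A)_\phi$ stabilizes: for $n \geq \mup$, the transition maps in this colimit become isomorphisms, so $\bXp(A) \simeq \bX_{\mup}(A)/\phi^*(\bX_{\mup-1}(A))_\phi$ as $R$-modules, matching the integral statement to the rational one from Corollary \ref{xprim}.

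Next I would establish freeness. Since $R$ is a discrete valuation ring, it suffices to show $\bXp(A)$ is finitely generated and torsion-free. Finite generation is immediate from the identification with $\bX_{\mup}(A)/\phi^*(\bX_{\mup-1}(A))_\phi$, a quotient of the finitely generated module $\bX_{\mup}(A)$ (finite generation of the $\bX_n(A)$ themselves follows from the preceding Lemma, or simply from $\bX_n(A) \hookrightarrow \Hom(N^n,\hG)$, a finite free module). For torsion-freeness, I would use Theorem \ref{phi*_bij}: the composite $\bX_{\mup}(A)/\phi^*(\bX_{\mup-1}(A))_\phi \hookrightarrow \bX_{\mup}(A)/\bX_{\mup-1}(A) \oplus \cdots$ — more precisely, I would argue that $\bXp(A) \simeq \bX_{\mup}(A)/(\bX_{\mup-1}(A) + \phi^*(\bX_{\mup-1}(A))_\phi)$ by analyzing how $\phi^*(\bX_{\mup-1}(A))_\phi$ sits relative to $\bX_{\mup-1}(A)$, and then that this is a quotient of $\bX_{\mup}(A)/\bX_{\mup-1}(A)$, which is free by Theorem \ref{phi*_bij}. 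Any quotient of a free module over a DVR that is torsion-free is free, so the remaining point is to verify no torsion is introduced, equivalently that $\phi^*(\bX_{\mup-1}(A))_\phi$ maps onto a saturated (direct summand) submodule of $\bX_{\mup}(A)/\bX_{\mup-1}(A)$.

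To pin down the rank, I would note that freeness plus the rational computation $\dim_K \bXp(A)_K = g$ from Corollary \ref{xprim} forces $\rk_R \bXp(A) = g$, since $\bXp(A) \otimes_R K = \bXp(A)_K$ for a finitely generated module. Thus the rank claim is essentially free once freeness is in hand.

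The main obstacle I anticipate is the saturation / torsion-freeness point: one must show that passing from the $K$-vector space statement to the integral statement does not create torsion, i.e.\ that the image of $\phi^*$ on the relevant associated-graded piece is a direct summand, not merely a finite-index submodule. This is exactly where Theorem \ref{phi*_bij} — the bijectivity (not just injectivity) of $\phi^*$ on $\bX_n(A)/\bX_{n-1}(A)$ for $n \geq \mup$ — does the real work, so the argument should reduce to carefully threading that bijectivity through the definition of $\bXp(A)$ as a colimit. I would expect the write-up to consist of identifying $\bXp(A)$ with a stabilized graded quotient, invoking Theorem \ref{phi*_bij} to see that quotient is free of rank $g$, and then reconciling with Corollary \ref{xprim}; the one genuinely delicate step is handling the Frobenius twist $(-)_\phi$ correctly over the integral ring.
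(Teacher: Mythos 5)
Your overall frame (over a DVR it suffices to prove finite generation and $\pi$-torsion-freeness, then read the rank off Corollary \ref{xprim}) is the same as the paper's, and your finite-generation step is essentially fine: eventual surjectivity of the transition maps in the colimit does follow from the surjectivity part of Theorem \ref{phi*_bij}, which is how the paper also gets finite generation (it phrases this via the vanishing of $\bX_{n+1}(A)/(u^{*}\bX_{n}(A)+\phi^{*}(\bX_{n}(A))_{\phi})$ for $n\geq \mup$ and the primitive generating sets $\mathbb{D}_{i}$). But your ``more precise'' identification $\bXp(A)\simeq \bX_{\mup}(A)/(\bX_{\mup-1}(A)+\phi^{*}(\bX_{\mup-1}(A))_{\phi})$ is wrong in general: the transition maps in $\varinjlim \bX_n(A)/\phi^{*}\bX_{n-1}(A)_{\phi}$ are induced by $u^{*}$, so classes of characters of order below $\mup$ are \emph{not} killed in the colimit. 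For instance, if $A=E_1\times E_2$ with $E_1$ a CL elliptic curve and $E_2$ non-CL, then $\mup=2$, $\bX_{1}(A)=R\,\Theta_1^{(1)}\neq 0$, your quotient has rank $1$, while $\bXp(A)$ has rank $g=2$. The correct stabilized form is the one in your first sentence, $\bX_{\mup}(A)/\phi^{*}(\bX_{\mup-1}(A))_{\phi}$, with no $\bX_{\mup-1}(A)$ in the denominator; the two coincide only when $\bX_{\mup-1}(A)=0$ (e.g.\ elliptic curves), whereas the theorem is about arbitrary abelian schemes.

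The more serious gap is the torsion-freeness itself, which you correctly single out as the crux but never prove: you defer it to Theorem \ref{phi*_bij}, and that theorem does not supply it. Theorem \ref{phi*_bij} is a statement about the graded pieces $\bX_{n}(A)/\bX_{n-1}(A)$ for $n\geq\mup$; it says nothing about whether $\phi^{*}\bX_{n-1}(A)_{\phi}$ is a saturated submodule of $\bX_{n}(A)$ (and in your reformulation the relevant Frobenius image comes from level $\mup-1$, which is even outside the range of that theorem). The paper closes this point by a separate, explicit divisibility argument: if $\pi[f]=0$ in $\bXp(A)$, then $\pi f=\phi^{*}g$ for some $g=\sum_I a_I\bx^I\in\bX_{\infty}(A)$, and since $\phi^{*}g(\bx)=\sum_I a_I\bx^{qI}+\pi h(\bx)$, comparing coefficients forces $\pi\mid a_I$ for every $I$; hence $g/\pi\in\bX_{\infty}(A)$ and $f=\phi^{*}(g/\pi)$, so $[f]=0$. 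Some argument of exactly this kind — a saturation statement about the image of $\phi^{*}$ inside $\bX_{n}(A)$, not the graded bijectivity — is indispensable, and it is missing from your proposal; once it is in place, the final rank count via Corollary \ref{xprim} is correct as you state.
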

\proof Since $R$ is a discrete valuation ring, it is enough to prove that $\bX_{\mathrm{prim}}(A)$ is finitely generated and $\pi$-torsion free.
We will firstly show $\bX_{\mathrm{prim}}(A)$ is a finitely generated 
$R$-module.

By Theorem \ref{phi*_bij}, for $n\geq m_{u}$, we have the following commutative diagram
$${
	\xymatrix{
		\bX_{n}(A)/\bX_{n-1}(A)\ar[d]_{p_{n}} \ar[r]_{\phi^{*}}^{\sim} &\bX_{n+1}(A)/\bX_{n}(A)\ar[d]^{p_{n+1}}  \\
		\dfrac{\bX_{n}(A)}{u^{*}\bX_{n-1}(A)+\phi^{*}(\bX_{n-1}(A))_{\phi}} \ar[r]^{\bar{\phi}^{*}} &\dfrac{\bX_{n+1}(A)}{u^{*}\bX_{n}(A)+\phi^{*}(\bX_{n}(A))_{\phi}}}}
$$
Since, the vertical maps are surjective and $\phi^{*}$ is bijective, we have $\bar{\phi}^{*}$ also surjective. But note that $\bar{\phi}^{*}$ is a zero map. Hence this implies $\dfrac{\bX_{n+1}(A)}{u^{*}\bX_{n}(A)+\phi^{*}(\bX_{n}(A))_{\phi}}$ is zero for $n\geq m_{u}$. 
Therefore by the above lemma, $\bX_{\infty}(A)$ is generated by $$\mathbb{D}_{1}\cup \mathbb{D}_{2}\cup \ldots \cup \mathbb{D}_{m_{u}}$$ as $R\{\phi^{*}\}$-module, hence its image generates $\bX_{\mathrm{prim}}(A)$ as $R$-module.

Now we will show that $\bX_{\mathrm{prim}}(A)$ is $\pi$-torsion free.
\cblue
Let $\bx_0$ be a system of local \'{e}tale coordinates of $A$ around the 
identity section. Then as described in $(4.4)$ of \cite{BS_b}, there is 
a naturally induced \'{e}tale coordinate system $\bx=(\bx_0,\dots , \bx_n)$
around the identity section of $J^nA$.
\color{black}

Suppose we have $\pi [f] =0$ in $\bX_{\mathrm{prim}}(A)$ where 
$f \in \bX_n(A)$ for
some $n$. Hence there exists a 
\cblue
$g \in \bX_{n-1}(A)$ such that $\pi f = \phi^* g$. 
Suppose $g$, when written with respect to the \'{e}tale
coordinate system $\bx$ as described above, is given by 
$$g(\bx)=\displaystyle{\sum_{I}}a_{I}\mathbf{x}^{I} \in \bX_{\infty}(A),$$ 
where $I$ runs over multi-indices and $a_I \in R$ that are $\pi$-adically
converging to $0$ as $I$ goes to infinity.
Hence we have
\begin{equation*}
	\pi f(\mathbf{x})=\phi^{*}g (\mathbf{x})= g(\phi(\mathbf{x}))=\displaystyle{\sum_{I}}a_{I}\mathbf{x}^{qI}+\pi h(\mathbf{x}),
\end{equation*}
for some $h \in \Ou(J^nA)$.
\color{black}
The above equation shows that each $a_{I}$ is divisible by $\pi$, hence 
$\frac{1}{\pi}g\in \bX_{\infty}(A)$. Since $\bX_{\infty}(A)$ is free and hence $\pi$-torsion free, we obtain $f=\phi^{*}(\frac{1}{\pi}g)$, which implies
that $[f] =0$ in $\bXp(A)$. Hence $\bX_{\mathrm{prim}}(A)$ is free and by Lemma \ref{xprim} the rank has to be $g$.\qed

Recall from Section $8$ of \cite{BS_b} that the following diagram of short exact sequences of $R$-modules commutes.
\begin{equation}
	\label{diag-crys-limit}
	\xymatrix{
		0 \ar[r] & \bXp(A)\ar[d]_\Upsilon \ar[r] & 
		\Hd(A) \ar[d]_\Phi \ar[r] &\bI(A) \ar@{^{(}->}[d] \ar[r] &  0 \\
		0 \ar[r] & \dualmod{\Lie (A)} \ar[r] & \Ext^\sharp(A,\hG)\ar[r] & \Ext(A,\hG)
		\ar[r] & 0 
	}
\end{equation}
Hence as a direct consequence we get:

\begin{theorem}
	\label{isocrys} Let $A$ be an abelian scheme of dimension $g$ over $R$. Then  $\Hd(A)$ is a free $R$-module with $g\leq \rk_R \Hd(A) \leq 2g$.
\end{theorem}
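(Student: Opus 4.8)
The plan is to read off the conclusion directly from the commutative diagram \eqref{diag-crys-limit} together with Theorem \ref{X_prim}, which we have already established. First I would recall that $\bXp(A)$ is free of rank $g$ by Theorem \ref{X_prim}, and that $\bI(A) \subseteq \Ext(A,\hG) \simeq H^1(A,\Ou_A) \simeq R^g$ is a submodule of a finite free module over the discrete valuation ring $R$, hence is itself free of some rank $r$ with $0 \le r \le g$. These two facts pin down the two outer terms of the top row of \eqref{diag-crys-limit}.

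Next I would use the exactness of the top row: the short exact sequence
$$
0 \map \bXp(A) \map \Hd(A) \map \bI(A) \map 0
$$
shows that $\Hd(A)$ is an extension of the free module $\bI(A)$ by the free module $\bXp(A)$. Since $\bI(A)$ is free (hence projective), this sequence splits, giving $\Hd(A) \simeq \bXp(A) \oplus \bI(A)$ as $R$-modules. In particular $\Hd(A)$ is finite free, with
$$
\rk_R \Hd(A) = \rk_R \bXp(A) + \rk_R \bI(A) = g + r.
$$
The bounds $0 \le r \le g$ then give $g \le \rk_R \Hd(A) \le 2g$, which is exactly the claim.

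The only point requiring a word of care — and the closest thing to an obstacle here — is the identification of the two outer ranks: that $\rk_R \bXp(A) = g$ is Theorem \ref{X_prim}, and that $\bI(A)$ has rank at most $g$ follows since it embeds into $\Ext(A,\hG) \cong R^g$ via the right-hand vertical injection of \eqref{diag-crys-limit}. Freeness of every module in sight is automatic because $R$ is a discrete valuation ring and all the modules involved are finitely generated (finite generation of $\bXp(A)$, and hence of $\Hd(A)$ via the extension, having been verified in the proof of Theorem \ref{X_prim}) and torsion-free (being submodules or extensions of torsion-free modules). Thus no genuinely new computation is needed: the theorem is a formal consequence of Theorem \ref{X_prim} and the structure of \eqref{diag-crys-limit}.
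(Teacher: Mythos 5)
Your proposal is correct and matches the paper's (very terse) argument: the paper deduces the theorem as a direct consequence of the short exact sequence $0 \map \bXp(A) \map \Hd(A) \map \bI(A) \map 0$ in \eqref{diag-crys-limit}, using that $\bXp(A)$ is free of rank $g$ (Theorem \ref{X_prim}) and that $\bI(A)$ injects into $\Ext(A,\hG)\cong R^g$, hence is free of rank at most $g$ over the discrete valuation ring $R$. Your splitting/direct-sum bookkeeping just makes explicit what the paper leaves implicit, so there is nothing to correct.
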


\begin{proof}[\cbl{\bf Proof of Theorem \ref{fin-gen}}]
 Since $ \mathbb{D}_{1}\cup\mathbb{D}_{2}\cup \ldots \cup \mathbb{D}_{m_{u}}$ generates $\bX_{\mathrm{prim}}(A)$ as $R$-module, we can choose a minimal generating subset and its cardinality has to be $g$, because $\bX_{\mathrm{prim}}(A)$ has rank $g$ and $R$ is a discrete valuation ring.
Hence $\bX_{\infty}(A)$ will be freely generated by $g$ $\d$-characters of order upto $m_{u}\leq g+1$ as in Theorem 7.6 in \cite{BS_b}.
\end{proof}

\section{Delta Isocrystal and Crystalline Cohomology of Elliptic Curves}
\label{last}
In this section, we prove the comparison isomorphism between the $\d$-isocrystal and the crystalline cohomology of an elliptic curve over $\QQ_{p}$ and recover the usual Hodge filtration in terms of $\d$-characters. Let $A$ be an elliptic curve over $R$ and fix an invariant differential 1-form $\omega$ of $A$. Recall from \eqref{sel} that we have the following exact sequence of $R$-modules:
$$0 \map \bX_n (A) \map \Hom(N^n,\hG) \stk{\partial}{\map} \Ext(A,\hG).$$
Further, recall from Section $9$ in \cite{BS_b}, that for an elliptic curve $A$, the character group $\bX_{\infty}(A)=R\{\phi^{*}\}\langle\Theta_{m}\rangle$
\cblue
and $\Psi_i$s defined as in (\ref{Psi_i}). 
\color{black}
Then we have  two possible cases:
\begin{itemize}
	\item[(i)] $A$ has a canonical lift iff $m=1$, then $i^{*}\Theta_{1}=\Psi_{1}$.
	\item[(ii)]Otherwise, we have $m=2$ and $i^{*}\Theta_{2}=\Psi_{2}-\lambda \Psi_{1}$, where $\lambda \in R$. In this case $\partial \Psi_{1}$ is nonzero and since $i^{*}\Theta_{2}$ lies in the kernel of $\partial$,  we get $\lambda=\partial \Psi_{2}/\partial \Psi_{1}$. 
\end{itemize}
Thus pulling back $\Theta_{1},\Theta_{2}$ by $\phi$ and $i$, we have 
\begin{align*}
	i^{*}\phi^{*}\Theta_{1}&=\mff^{*}i^{*}\Theta_{1}+\gamma \Psi_{1}=\Psi_{2}+\gamma \Psi_{1}\\
	i^{*}\phi^{*}\Theta_{2}&=\mff^{*}i^{*}\Theta_{2}+\gamma \Psi_{1}=\Psi_{3}-\phi(\lambda)\Psi_{2}+\gamma \Psi_{1}.
\end{align*} 

\subsection{Geometric Interpretation of the Arithmetic Picard-Fuchs Operator}
\label{apf}
The main aim of this subsection is to prove Proposition \ref{Lambdaform} 
that will play an important role in proving Theorem \ref{Iso_crys}.
Recall from Lemma 2.8 in \cite{bui95}, we have the morphism $\vp:\hG\longrightarrow A$, which can also be described as the composition of the following maps below:
\begin{align*}
 \hG \stk{{\frac{1}{\pi}\mathrm{exp}_{\mathcal{F}}}(\pi x)}
	{\longrightarrow} N^{1}A \stk{i}{\map} J^{1}A 
\stk{\phi}{\longrightarrow} A  
	\hspace{4cm}	 \\
	\nonumber  x \mapsto 1/\pi ~ \mathrm{exp}_{\mathcal{F}}(\pi x) \mapsto(0,1/\pi ~ \mathrm{exp}_{\mathcal{F}}(\pi x))\mapsto \mathrm{exp}_{\mathcal{F}}(\pi x) \hspace{2cm} 
\end{align*}
where $\mathcal{F}$ denotes the formal group associated to the group law of $A$
\cblue and $\exp_{\mathcal{F}}$ is the formal exponential map corresponding to 
the formal group law $\mathcal{F}$ with $x$ as a chosen coordinate system 
around the zero section of the additive group law.  \color{black}
Therefore as in Page $324$ in \cite{bui95}, this induces an injective pullback 
map
\begin{align*}
\vp^{*}:\bX_{\infty}(A)\longrightarrow \bX_{\infty}(\hG)=R\{\phi_{\hG}\} ~\mathrm{such~that}\\ 
	\nonumber	\vp^{*}(\bX_{n}(A))\subset \bX_{n}(\hG)=R+R\langle
\phi_{\hG} \rangle+\ldots+R\langle\phi^{n}_{\hG}\rangle.
\end{align*}

Let $B$ be a $\pi$-adically complete $R$ algebra which has a $\pi$-derivation
$\d$ that lifts the fixed derivation on $R$. For each $n$, by the universal 
property of 
Witt vectors (cf. Section 1 in \cite{bor11a}) the $\pi$-derivation induces the canonical map
$B \stk{\exp_\d}{\longrightarrow} W_n(B)$. Hence given a $B$-point of a 
$\pi$-formal scheme $X$ induces $\nabla: X(B) \map X(W_n(B))= J^nX(B)$.

Consider
the morphism $\phi \circ i : N^1A \map A$. Then by the universal property 
of jet spaces, Proposition \ref{univ}, this induces a unique map of 
prolongation 
sequences $N^{*+1}A \map J^*A$. In the case of $m=0$ in Theorem \ref{lateral},
it is easy to see
that the morphism between the prolongation sequences at each level is given by 
$\phi \circ i: N^{n+1}A \map J^nA$. Also by Theorem \ref{formalcase1} in the 
case $m=0$, for all $n \geq 0$ we have $N^{n+1}A \simeq J^n(N^1A)$.

In the case when $A$ is an elliptic curve over $\Spf R$, we
have an isomorphism $\vartheta^{-1}:\hG \map N^1A$ and hence induces an isomorphism
$J^n(\vartheta^{-1}): J^n\hG \simeq \bb{W}_n \map N^{n+1}A$.

\begin{lemma}
	\label{nuiso}
	Let $\Psi: N^nA \map \hG$ be given by $\Psi =  b_1 \Psi_1 +
	\cdots + b_n\Psi_n$ where $b_i \in R$ for all $i=1, \dots , n$. Then 
	$$\Psi \circ J^{n-1}(\vartheta^{-1}) = b_1 \mathbbm{1}_{\hG} + 
	b_2 \phi_{\hG} +\cdots + b_n \phi^{n-1}_{\hG}.$$
\end{lemma}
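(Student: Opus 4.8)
The plan is to unwind the identifications on both sides and reduce the claim to the explicit computation of the composite $\hG \xrightarrow{\vartheta^{-1}} N^1A \xrightarrow{i} J^1A \xrightarrow{\phi_A} A$ together with the multiplicativity of the jet functor. Recall from the paragraph preceding the lemma that the morphism $\phi_A\circ i: N^1A \map A$ induces, by the universal property of jet spaces (Proposition~\ref{univ}), a morphism of prolongation sequences whose level-$(n-1)$ component is $\phi_A\circ i: N^nA \map J^{n-1}A$, and that $J^{n-1}(\vartheta^{-1}): J^{n-1}\hG \simeq \bb{W}_{n-1} \map N^nA$ is an isomorphism. So the composite $\Psi\circ J^{n-1}(\vartheta^{-1})$ is a $\d$-character of $\hG$ of order $\leq n-1$, i.e. an element of $\bX_{n-1}(\hG) = R + R\langle\phi_{\hG}\rangle + \cdots + R\langle\phi^{n-1}_{\hG}\rangle$, and the task is to identify its coordinates.

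First I would treat the case $n=1$, i.e. check that each basic character $\Psi_j: N^nA\map\hG$ (thought of as a projection-type Witt coordinate after composing with $u^*$ and the identification $N^{n}A\simeq \bb W_{n-1}$) pulls back correctly; concretely, one checks that $\mathbbm{1}_{N^1A}\circ\vartheta^{-1} = \mathbbm{1}_{\hG}$, which is just the statement that $\vartheta^{-1}$ is the chosen isomorphism. Then, using that $\vartheta^{-1}$ is compatible with the prolongation structures — this is exactly what lets us write $J^{n-1}(\vartheta^{-1})$ and is built into the construction — I would show that the lateral Frobenius $\mfrak{f}$ on the $N$-side corresponds under $\vartheta^{-1}$ to the canonical lift of Frobenius $\phi_{\hG}$ on $\bb W_*$; this is where Theorem~\ref{formalcase} (for $m=0$) and the identity $\mfrak{f}=\phi_{N^1A}$ recorded at the start of Section~\ref{intmod} enter. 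Since each $\Psi_j$ is obtained from $\Psi_1$ by applying the Frobenius pullback (the basic characters are the coordinates $\bPsi_1,\dots,\bPsi_n$ of the Witt/ghost structure, and $\Psi_{j}$ corresponds to $\phi_{\hG}^{\,j-1}$ applied to the degree-one piece), the single computation for $\Psi_1$ together with $R$-linearity of $\Psi = \sum b_j\Psi_j$ propagates to give $\Psi\circ J^{n-1}(\vartheta^{-1}) = \sum_{j=1}^n b_j\,\phi_{\hG}^{\,j-1}$.

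The main obstacle I anticipate is bookkeeping rather than conceptual: one must pin down precisely how the characters $\Psi_1,\dots,\Psi_n$ on $N^nA$ are normalized (they come with a factor of $\pi^j$ from the ghost map, as one sees in the displays computing $\pi^{m+1}\mfrak{f}^i(\bx_{m+1})$ in Section~\ref{kj}), and verify that the isomorphism $\vartheta^{-1}:\hG\map N^1A$ — which is essentially $x\mapsto \tfrac1\pi\exp_{\mathcal F}(\pi x)$ composed appropriately, cf. Lemma~2.8 of \cite{bui95} as recalled above — matches these normalizations so that $\Psi_1$ pulls back to $\mathbbm 1_{\hG}$ exactly (with no stray power of $\pi$). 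Once that normalization is fixed, applying $J^{n-1}(-)$ and using functoriality of the jet construction together with Lemma~\ref{JetGa} ($J^{n-1}\hG\simeq\bb W_{n-1}$) and the correspondence between the Witt coordinates on $\bb W_{n-1}$ and the powers $1,\phi_{\hG},\dots,\phi_{\hG}^{n-1}$ finishes the argument. I would also double-check the commutation $\phi_A\circ i\circ\mfrak f = \phi_A^{\circ 2}\circ i$ (Theorem~\ref{lateral}, identity (\ref{latfrobcom})) is exactly what makes the induced map of prolongation sequences well-defined, so that $\Psi\circ J^{n-1}(\vartheta^{-1})$ is genuinely the claimed $\d$-character and not merely equal to it after some further identification.
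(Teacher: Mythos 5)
Your proposal is correct and follows essentially the same route as the paper: reduce by $R$-linearity to the basic characters, use that $\Psi_i=\vartheta\circ\phi_{N^1A}^{\circ(i-1)}$ (so the normalization you worry about is built into the definition of the $\Psi_i$), and invoke the compatibility $\vartheta^{-1}\circ\phi_{\hG}^{\circ j}=\phi_{N^1A}^{\circ j}\circ J^{n-1}(\vartheta^{-1})$ coming from functoriality of the jet construction (Theorem~\ref{formalcase}) to get $\Psi_i\circ J^{n-1}(\vartheta^{-1})=\phi_{\hG}^{\circ(i-1)}$. The extra check of the identity $\phi_A\circ i\circ\mfrak{f}=\phi_A^{\circ 2}\circ i$ is not needed for this lemma, but it does no harm.
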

\begin{proof}
It is enough to prove the result for $\Psi = \Psi_i$ for all $i$.
Recall $\Psi_i = \vartheta \circ 
	\fra^{\circ (i-1)}$. By Theorem \ref{formalcase1}, we have for all $j \leq 
	n-1$
	$$\vartheta^{-1} \circ \phi_{\hG}^{\circ j} = \fra^{\circ j}
	\circ J^{n-1}(\vartheta^{-1}).$$ 
Then we have 
\beqar
	\Psi \circ J^{n-1}(\vartheta^{-1}) &=& \vartheta \circ 
\fra^{\circ (i-1)} \circ J^{n-1}(\vartheta^{-1}) \\
	&=& \vartheta \circ \vartheta^{-1} \circ \phi_{\hG}^{(i-1)} \\
	&=& \phi_{\hG}^{(i-1)}
\eeqar
and this proves our result.
\end{proof}

Let $\Theta \in \bX_r(A)$ be a delta character of order $r$ of $A$. Then 
$\Theta: J^rA \map \hG$ is a morphism of $\pi$-formal group schemes. Hence
again by the universal property of jet spaces we have for each $n \geq 0$
the following compatible system of morphisms
\begin{align}
\label{Theta-diagram}
\xymatrix{
	J^{n+r}\hG \ar[d] \ar[r]^-{J^{n+r}(\vartheta^{-1})} & N^{n+r+1}A \ar[d] \ar[r]^{\phi \circ i} & J^{n+r}A \ar[d]\ar[r]^{J^n(\Theta)} & J^n\hG \ar[d] \\
	J^r\hG \ar[r]^{J^r(\vartheta^{-1})} \ar[d] & N^{r+1}A \ar[d] 
\ar[r]^{\phi \circ i} & J^{r}A \ar[d] \ar[r]^{\Theta} & \hG \\
	\hG \ar[r]^{\vartheta^{-1}} & N^1A \ar[r]^{\phi \circ i} & A & 
}	
\end{align}

Consider the morphism $\Lambda_\theta: J^r \hG \map \hG$ given by the 
following composition
\begin{align}
	\label{Lambda}
	\Lambda_\Theta:= \Theta \circ (\phi \circ i) \circ 
J^r(\vartheta^{-1}).
\end{align}
Note that $\Lambda_\Theta$ is a delta character of $\hG$ of order $r$, in other
words, $\Lambda_\Theta \in \bX_r(\hG)$. This is known as {\it{arithmetic Picard-Fuchs operator}}. It is also called the symbol of $\Theta$ in 
\cite{buium-miller}.   
Hence for all $i= 0, \dots, r$ there exist $b_i \in R$ such that 
\begin{align}
	\label{Lambda-form}
	\Lambda_\Theta &=  b_r \phi^r_{\hG} + \cdots + b_1 \phi_{\hG}+ b_0
\end{align}

For any $\pi$-formal scheme $G$, by Theorem \ref{formalcase1} we have $N^{r+1}G
\simeq J^r(N^1G)$. Hence for 
any $R$-point of $N^1G$, we can consider the canonical lift $\nabla: N^1G(R) 
\map J^r(N^1G)(R) \simeq N^{r+1}G (R)$.
Then evaluating diagram (\ref{Theta-diagram}) on $R$-points we obtain 

\begin{align}
	\label{Lambda-R-dia}
\xymatrix{
	J^r\hG(R) \ar[r]^{J^r(\vartheta^{-1})} \ar[d] & N^{r+1}A(R) \ar[d] 
	\ar[r]^{\phi \circ i} & J^{r}A(R) \ar[d] \ar[r]^{\Theta} & \hG(R) \\
	\hG(R) \ar@/^1pc/[u]^\nabla \ar[r]^{\vartheta^{-1}} & N^1A(R) 
	\ar@/^1pc/[u]^\nabla \ar[r]^{\phi \circ i} & A(R) 
	\ar@/^1pc/[u]^\nabla & 
}	
\end{align}

When $A$ does not have CL, then $r=2$ and the $\phi$-linear endomorphism of $R$
in Page 325 of \cite{bui95} is obtained by the following composition
$$
\Theta \circ \nabla \circ(\phi \circ i)  \circ \vartheta^{-1}
$$
Then by (\ref{Lambda-R-dia}) the above map is the same as 
$$
\Theta \circ (\phi \circ i) \circ J^2(\vartheta^{-1}) \circ \nabla = 
\Lambda_\Theta \circ \nabla
$$
The following result relates $\Lambda_\Theta$ with $i^*\Theta$ geometrically:
\begin{proposition} \label{Lambdaform}
	Let $A$ be an ellitpic curve over $R$, and $\Theta \in \bX_r(A)$ such
	that $i^* \Theta = a_r \Psi_r + \cdots + a_1 \Psi_2 +  + a_1 \Psi_1$. 
	Then
$$
	\Lambda_\Theta = \phi(a_r) \phi^r_{\hG} + \cdots + \phi(a_1) \phi_{\hG}
	+ \gamma 
$$
	where $\gamma \in R$ is as given in Proposition \ref{diff}.
\end{proposition}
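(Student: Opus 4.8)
The plan is to unwind the definition \eqref{Lambda} of $\Lambda_\Theta$ and reduce everything to Proposition \ref{diff} together with Lemma \ref{nuiso}; no genuinely new computation is needed, only careful bookkeeping.

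First I would identify the composite $\Theta \circ (\phi_A \circ i)\colon N^{r+1}A \to \hG$ occurring in \eqref{Lambda} with $i^*\phi^*\Theta$. Indeed $\phi^*\Theta = \Theta \circ \phi_A$ as a character $J^{r+1}A \to \hG$, and pulling back along the inclusion $i\colon N^{r+1}A \hookrightarrow J^{r+1}A$ gives $i^*\phi^*\Theta = \Theta \circ \phi_A \circ i$. Hence $\Lambda_\Theta = (i^*\phi^*\Theta)\circ J^r(\vartheta^{-1})$, and it remains to compute $i^*\phi^*\Theta$ as a homomorphism $N^{r+1}A \to \hG$ in terms of the $\Psi_j$'s.

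Next I would invoke Proposition \ref{diff}(1) to write $i^*\phi^*\Theta = \mff^*(i^*\Theta) + \gamma\,\Psi_1$, with $\gamma = \pi A_0$. Writing $i^*\Theta = a_r\Psi_r + \cdots + a_2\Psi_2 + a_1\Psi_1$ and using that $\mff^* = \phi_{N^1A}^*$ is $\phi$-semilinear with $\mff^*\Psi_i = \Psi_{i+1}$ — the latter being immediate from $\Psi_i = \vartheta\circ\phi_{N^1A}^{\circ(i-1)}$ and the level-compatibility of the isomorphisms $\vartheta$ provided by Theorem \ref{formalcase} — one obtains
$$
i^*\phi^*\Theta = \phi(a_r)\Psi_{r+1} + \cdots + \phi(a_1)\Psi_2 + \gamma\,\Psi_1 .
$$
Finally I would apply Lemma \ref{nuiso} with $n = r+1$, under which $\Psi_j \circ J^r(\vartheta^{-1}) = \phi_{\hG}^{\,j-1}$, to conclude
$$
\Lambda_\Theta = (i^*\phi^*\Theta)\circ J^r(\vartheta^{-1}) = \phi(a_r)\phi_{\hG}^{\,r} + \cdots + \phi(a_1)\phi_{\hG} + \gamma ,
$$
which is the claimed formula (the constant term $\gamma$ corresponding to $b_0$ in \eqref{Lambda-form}).

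The only places requiring care are the $\phi$-semilinearity of $\mff^*$, which is exactly what turns each $a_i$ into $\phi(a_i)$ and shifts the index by one, and the precise claim $\mff^*\Psi_i = \Psi_{i+1}$ on $N^{r+1}A$, which rests on the identification $N^{n+1}A \simeq J^n(N^1A)$ and the compatibility of the $\vartheta$'s across jet levels recorded before Lemma \ref{nuiso}. Everything else is a direct substitution. I expect this step — keeping the semilinear coefficients and the reindexing straight while passing from $N^rA$ to $N^{r+1}A$ and then down to $J^r\hG$ via $J^r(\vartheta^{-1})$ — to be the only real bookkeeping obstacle. The case actually used in Theorem \ref{Iso_crys} is $r = 2$, but the argument never uses $r \le 2$.
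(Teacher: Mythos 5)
Your proposal is correct and follows essentially the same route as the paper's own proof: apply Proposition \ref{diff}(1) (together with the $\phi$-semilinearity of $\mff^{*}$ and the shift $\mff^{*}\Psi_{i}=\Psi_{i+1}$, which the paper leaves implicit) to get $(\phi_A\circ i)^{*}\Theta=\phi(a_r)\Psi_{r+1}+\cdots+\phi(a_1)\Psi_{2}+\gamma\Psi_{1}$, and then compose with $J^{r}(\vartheta^{-1})$ and invoke Lemma \ref{nuiso}. The only difference is that you spell out the intermediate bookkeeping explicitly; no gap.
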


\begin{proof}
	By Proposition \ref{diff}(1) we have 
	$$
	(\phi \circ i)^* \Theta = \phi(a_r) \Psi_{r+1} + \cdots + \phi(a_1) 
	\Psi_2 + \gamma \Psi_1
	$$
	Then $\Lambda_\Theta = (\phi \circ i)^* \Theta \circ 
	J^r(\vartheta^{-1})$ and the result follows from applying Lemma 
	\ref{nuiso} to the above.
\end{proof}

\subsection{Proof of Theorem \ref{Iso_crys}}
\label{mainthm}\
\cblue
Let $\mr{\Iso}(H) = (H,F,(H \supset V \supset \{0\}))$ be a filtered isocrystal over $\Q_p$ 
where $H$ is a two dimensional vector space over $\Q_p$, $F:V \map V$ is a 
semilinear (in fact, in this case $F$ is linear since $H$ is a $\Q_p$-vector 
space)
operator which is a bijection and $V$ is a one-dimensional $\Q_p$-subspace 
of $H$. Let $p_F(t) \in \Q_p[t]$ be the degree two
characteristic polynomial of $F$.

\begin{proposition}
\label{filiso}
Let $\mr{\Iso}(H)=(H,F,(H \supset V \supset \{0\}))$ and 
$\mr{\Iso}(H')=
\newline
(H',F',(H' \supset V' \supset \{0\}))$ 
be filtered isocrystals over $\Q_p$ such that 
\begin{enumerate}
\item $\dim_{\Q_p} H = \dim_{\Q_p} H' = 2$
\item $F(V) \ne V$ and $F'(V') \ne V'$ and
\item $p_F(t) = p_{F'}(t)=: p(t)$. 
\end{enumerate}
Then $\mr{\Iso}(H) \simeq \mr{\Iso}(H')$ in the category of filtered isocrystals
over $\Q_p$.
\end{proposition}

\begin{proof}

Let $p(t) = t^2 - at -b$ for some $a,b \in \Q_p$ and choose any non-zero vector
 $v \in V$. Since $F(V) \ne V$, the set $\{v, F(v)\}$ forms a $\Q_p$-basis of 
$H$. Then we have 
$$
F^{\circ 2}(v) = a F(v) + b v.
$$
Similarly, for any non-zero vector $w \in V'$ the set $\{w,F'(w)\}$ is a 
$\Q_p$-basis for $H'$ and we have
$$
F'^{\circ 2}(w) = a F'(w) + b w.
$$
Define the $\Q_p$-linear map $\Phi: H \map H'$ given by $\Phi(v):= w$ and 
$\Phi(F(v)) := F'(w)$. Then $\Phi$ is an isomorphism of $\Q_p$-vector spaces
that satisfies
$$
\xymatrix{
H \ar[d]_F \ar[r]^\Phi & H'\ar[d]^{F'}\\
H \ar[r]^\Phi & H'
}
$$
such that $\Phi(V) = V'$. Hence $\Phi:\mathrm{\Iso}(H) \map \mathrm{\Iso}(H')$
is the required isomorphism of filtered isocrystals and this completes our
proof.




\end{proof}

\color{black}

We have the following two cases depending on whether $A$ admits a lift of Frobenius or not. In both cases, we will first show that $\mathrm{\Iso}(\Hd(A))$ is a weakly admissible isocrystal over $\QQ_{p}$ and then prove part (1) and part (2) of the theorem respectively.

(1) \textbf{\underline{Non-CL case}:} Let $A$ be a non-CL elliptic curve over $\ZZ_{p}$. Then by Theorem 9.7(b) in \cite{BS_b}, we have $\bX_{\infty}(A)= R\{\phi^{*}\} \langle \Theta_{2}\rangle$ and $\Hd(A)=R\langle\Psi_{1},\Psi_{2}\rangle.$ 
Recall from Section $9$ in \cite{BS_b} that the matrix of $\mff^{*}$ with respect to the basis $\{\Psi_{1},\Psi_{2}\}$ is given by
	\[
	[\mathfrak{f}^{*}]=
	\begin{bmatrix}
		0  & -\gamma  \\
		1 & \lambda & 
		
	\end{bmatrix}.
	\]
Note that $\phi$ is identity on $\Z_{p}$ and hence $\mff^{*}$ becomes linear. 
Note that, since $i^{*}\Theta_{2}=\Psi_{2}-\lambda \Psi_{1}$ and 
$\phi(\lambda)=\lambda$ because $\lambda \in \Z_p$, by Proposition \ref{Lambdaform}, we obtain that the arithmetic Picard-Fuchs operator of $\Theta_{2}$ as
	\begin{align*}
	\Lambda_{\Theta_{2}}=\phi_{\hG}^{2}-\lambda \phi_{\hG} +\gamma \in
\bX_\infty(\hG). 
	\end{align*}	
	On the other hand, from Theorem $1.10$ in \cite{bui97}, we have 
$$\Lambda_{\Theta_{2}}=\phi_{\hG}^{2}-a_{p}\phi_{\hG}+p\in \bX_{\infty}(\hG),$$
	where $a_{p}=p+1-\#A_{0}(\FF_{p})$.
Therefore under this identification, it follows that $\lambda=a_{p}$ and 
$\gamma=p$. Hence the characteristic polynomial of $\fra^*$ is given by
$$
p_{\fra^*}(t) = t^2 - \lam t + \gamma = t^2 - a_p t + p.
$$

Combining Theorem 9.7 in \cite{BS_b}, this proves that $\mathrm{\Iso}(\Hd(A))$ is weakly admissible filtered isocrystal.





\cblu{
Consider the filtered isocrystal 
$$
\mathrm{\Iso}(\bH_\d(A)_{\Q_p}) = (\bH_\d(A)_{\Q_p}, \fra^*, 
\bH_\d(A)_{\Q_p}^\bullet)
$$
where $\bH_\d(A)_{\Q_p}^\bullet$ is the filtration given by 
$\bH_\d(A)_{\Q_p} \supset \bXp(A)_{\Q_p} \supset \{0\}$.
Since $A$ is non-CL, we have $\fra^*(\bXp(A)_{\Q_p}) \ne \bXp(A)_{\Q_p}$ and
the characteristic polynomial of $\fra^*$ is $p_{\fra^*}(t) = t^2 - a_p t +p$.

On the other hand, consider the filtered isocrystal of the first crystalline
cohomology $\mathrm{\Iso}(\Hcr(A)_{\Q_p})= (\Hcr(A)_{\Q_p}, \Fc,
\Hcr(A)^\bullet)$ where $\Hcr(A)^\bullet$ is the Hodge filtration $\Hcr(A) 
\supset H^0(A,\Omega_A)\supset \{0\}$ and $\Fc$ is the crystalline Frobenius
operator on $\Hcr(A)$.
Since $A$ is a non-CL elliptic curve over $\Z_p$, by Theorem $3.15$ of 
\cite{BO},
$\Fc (H^0(A,\Omega_A)_{\Q_p}) \ne H^0(A,\Omega_A)_{\Q_p}$ and the characteristic
polynomial of $\Fc$ is $p_{\Fc}(t) = t^2 - a_p t + p$. 

Hence by Proposition \ref{filiso} applied to $\mathrm{\Iso}(\bH_\d(A)_{\Q_p})$
and $\mathrm{\Iso}(\Hcr(A)_{\Q_p})$ we obtain our required isomorphism of 
filtered isocrystals.
}





\vspace{.2cm}

\noindent (2) \textbf{\underline{CL case}:} 
\cbl{
Let $\mu :A \map A$ denote the canonical lift of Frobenius on $A$. Then $\mu$
can be written as $\mu(x) = x^p + p f(x)$ where $x$ is a local \'{e}tale 
coordinate around the identity section of $A$ and $f(x)$ is a restricted
formal power series in $x$ with coefficients in $\Z_p$. 

Note that we have the following injection of rings
$$
\End (A) \inj \End_{\Z_p} (\Lie (A)),
$$
where an endomorphism is sent to multiplication on the Lie algebra of $A$ 
by the derivative of the endomorphism at the identity section.
In particular for our given $\mu$, the derivative operator is given by
multiplication of
$\beta:= D\mu (0) = p f'(0)$. Note that $\beta \ne 0$ since $\mu$ is a 
non-trivial endomorphism of $A$ and we have $\rm{val}_p(\beta) \geq 1$.

Since $A$ is a CL elliptic curve, the crystalline Frobenius $\Fc$ on $\Hcr(A)$ 
preserves the Hodge filtration and is induced by pulling back via $\mu$.
Hence $\Fc: H^0(A,\Omega_A) \map H^0(A,\Omega_A) $ is given 
by $\Fc(\omega) = \beta \omega$ for all $\omega \in 
H^0(A, \Omega_A)$. Hence $\beta$ is an eigen value for the operator
$\Fc$ and therefore is one of the roots of its characteristic polynomial 
$p_{\Fc}(t) = t^2 -a_p t +p$. Let $\alpha$ be the other root of $p_{\Fc}(t)$.
Then $\alpha \beta = p$ which implies $\rm{val}_p(\beta) = 1$ and 
$\rm{val}_p(\alpha) =0$, that is $\alpha \in \Z_p^\times$.

By Theorem  9.7(b) in \cite{BS_b}, we have $\bX_{\infty}(A)= R\{\phi^{*}\} \langle \Theta_{1}\rangle$ and $\Hd(A)=R\langle\Psi_{1}\rangle.$ Therefore the semilinear operator act as 
$$\mff^{*}(\Psi_{1})=-\gamma\Psi_{1}.$$ 
Here we have $i^*\Theta_{1}=\Psi_{1}$, hence by Proposition \ref{Lambdaform}, we obtain the arithmetic Picard-Fuchs operator of $\Theta_{1}$ as
	\begin{align*}
		\Lambda_{\Theta_{1}}=\phi_{\hG}+\gamma\in \bX_\infty(\hG). 
	\end{align*}	
From Theorem $1.10$ in \cite{bui97}, we have 
	$$\Lambda_{\Theta_{1}}=\phi_{\hG}-\beta\in \bX_{\infty}(\hG),$$
	where $\beta$ is the nonunit root of the polynomial $t^{2}-a_{p}t+p$. Therefore we have $\gamma=-\beta$. 
Hence by Theorem 9.7 of \cite{BS_b} we have that
$(\Hd(A),\mff^{*})$ is a weakly admissible filtered isocrystal as it is one dimensional. 

Let $v$ be a basis vector for $\Hd(A)$ and $w$ a basis vector 
for $H^0(A,\Omega_A)$ over $\Q_p$. Define the $\Q_p$-linear map $\Phi: \Hd(A)
\map H^0(A,\Omega_A)$ given by $\Phi(v) = w$. Then $\Phi$ is the required
isomorphism of isocrystals and this completes the proof.
}


\cblu{
\section{Appendix: The functor of points approach}
\label{FuncPoin}


In this section we will reprove Theorem \ref{Niso1} and Theorem \ref{affjet1}
using the functor of points approach.
The following elegant method has been pointed out by the anonymous referee to
whom we are greatly indebted.

As in the proof of Theorem \ref{Niso1}, it is sufficient to show the result
in the case when $X = \A^1 = \Spec A$ where $A = R[x]$ is the polynomial 
over $R$. Hence we will reprove Theorem \ref{coord4} here.

Then $N^{[m]n}X = \Spec N_{[m]n}A$ and $\Delta$ be the unique $\pi$-derivation
associated to the lateral Frobenius $\fra$ and satisfies 
\begin{align}
\label{fraAp}
\fra(a) = a^q + \pi \Delta(a)
\end{align}
for all $a \in N_{[m]n}A$ and $n \geq 1$.

Let $C$ be a $\pi$-torsion free $\Ou$-algebra. 
For each $m,n\geq0$, consider the {\it two dimensional ghost map}
$w_{m,n}: W_m(W_n(C))\longrightarrow \Pi_m(\Pi_n(C))$ given as the composition
$$
w_{m,n}:W_m(W_n(C)) \stk{W_m(w)}{\longrightarrow}  W_m(\Pi_n (C)) 
\stk{w}{\longrightarrow} \Pi_m \Pi_n (C).
$$
where $w$ is the usual ghost map. An element $x \in W_m(W_n(C))$ can 
be written as 
\begin{align}
\label{2dimx}
x = \begin{pmatrix}
x_{00} & x_{10} &\ldots &x_{m0}\\
x_{01} & x_{11} &\ldots &x_{m1}\\
\vdots &\ddots& &\vdots\\
x_{0n} & x_{1n} &\ldots & x_{mn}
\end{pmatrix}
\end{align}
where $x_{ij} \in C$ for all $i = 0, \dots , m$ and $j = 0,\dots ,n$. Then 
$w_{m,n}(x)$ is given by the following
$$
\begin{pmatrix}
x_{00} & {x_{00}}^q+\pi x_{10} &\ldots & x_{00}^{q^m} +\pi {x_{10}}^{q^{m-1}}+\ldots+ \pi^{m} x_{m0} \\
{x_{00}}^q+ \pi x_{01} & ({x_{00}}^q+ \pi x_{01})^q +\pi ({x_{10}}^q+\pi x_{11})&\ldots & ({x_{00}}^q+ \pi x_{01})^{q^m}+\ldots+ \pi^{m} (x_{m0}^{q}+\pi x_{m1}) \\
\vdots &\ddots& \vdots & \vdots
\end{pmatrix}.
$$

Define the \textit{hook map} $h_{m,n}:\Pi_m(\Pi_n(C))\longrightarrow 
\Pi_{m+n}(C)$ given by the concatenation of the top row with the right column 
of a matrix as follows:
\begin{align*}
h_{m,n}\left\langle\begin{matrix}
z_{00} & z_{10} &\ldots &z_{m0}\\
z_{01} & z_{11} &\ldots &z_{m1}\\
\vdots &\ddots& &\vdots\\
z_{0n} & z_{1n} &\ldots &z_{mn}
\end{matrix}\right\rangle :=\langle{z_{00},z_{10},\ldots, z_{m0},z_{m1},\ldots, z_{mn}}\rangle.
\end{align*}

\begin{proposition}\label{hook-prop}
 \begin{itemize} 
\item[(i)] For each $m,n\geq0$, and a $\pi$-torsion free $\Ou$-algebra $C$, there exists a unique map $r_{m,n}: W_{m}(W_{n}(C))\longrightarrow W_{m+n}(C)$ such that the following diagram commutes
$$
\xymatrix{
 W_{m}(W_{n}(C)) \ar[d]_{w_{m,n}}\ar[r]^{r_{m,n}} &  W_{m+n}(C)\ar[d]_{w_{m+n}}\\
\Pi_m(\Pi_n(C))\ar[r]^{h_{m,n}}  & \Pi_{m+n}(C).
}$$
\item[(ii)] There is a unique functorial family of $\Ou$-algebra homomorphisms $r_{m,n}:W_{m}(W_{n}(C))\longrightarrow W_{m+n}(C)$, where $C$ ranges over all $\Ou$-algebras. Moreover each $r_{m,n}$ is a retraction of the comonad structure map $\Delta: W_{m+n}(C)\longrightarrow W_{m}(W_{n}(C))$.
\end{itemize}
\end{proposition}

Before we prove Proposition \ref{hook-prop}, we will need the following 
results.

\begin{lemma}\label{n=1} Proposition $\ref{hook-prop}(i)$ is true for $n=1$
and $m \geq 0$.
\end{lemma}
\begin{proof} 

We will first show the existence of the map $r_{m,1}$.
Let $x \in W_m((W_1(C))$. Then its image under the two dimensional ghost map 
$w_{m,n}$ is given by
\begin{align*}
 \left\langle\begin{matrix}
x_{00} & {x_{00}}^q+\pi x_{10} &\ldots & x_{00}^{q^m} +\pi {x_{10}}^{q^{m-1}}+\ldots+ \pi^{m} x_{m0} \\
{x_{00}}^q+ \pi x_{01} & ({x_{00}}^q+ \pi x_{01})^q +\pi ({x_{10}}^q+\pi x_{11})&\ldots & ({x_{00}}^q+ \pi x_{01})^{q^m}+\ldots+ \pi^{m} (x_{m0}^{q}+\pi x_{m1}) 
\end{matrix} \right \rangle.
\end{align*}
where $x$ is represented as in (\ref{2dimx}). 
Composing with the hook map, $h_{m,n}(w_{m,n}(x))$ is given by
\begin{align*}
\langle {x_{00}, {x_{00}}^q+\pi x_{10} ,\ldots, x_{00}^{q^m} +\pi {x_{10}}^{q^{m-1}}+\ldots+ \pi^{m} x_{m0},({x_{00}}^q+ \pi x_{01})^{q^m}+\ldots+ \pi^{m} (x_{m0}^{q}+\pi x_{m1})}\rangle.
\end{align*}
We need to show that this ghost vector is the image of a (necessarily unique) Witt vector in $W_{m+1}(C)$.

We need to find an element $x'=(x_0,\ldots, x_m, z)$ in $W_{m+1}(C)$ satisfying $w_{m+1}(x')=h_{m,n}(w_{m,n}(x))$. Since $C$ is $\pi$-torsion free, solving the 
equations obtained from comparing the ghost coordinates we obtain that
$x_{i}=x_{i0}$ for $0\leq i\leq m$, and $z$ satisfies 
\begin{align*} 
x_{00}^{q^{m+1}} +\pi {x_{10}}^{q^m}+\ldots+ \pi^{m} x_{m1}^q+\pi^{m+1}z &=
({x_{00}}^q+ \pi x_{01})^{q^m}+\pi({x_{10}}^q+ \pi x_{01})^{q^{m-1}}\\
& \hspace{1cm} \cdots+ \pi^{m} (x_{m0}^{q}+\pi x_{m0})
\end{align*}

Hence we have
\begin{align}
\label{congruence}
\pi^{m+1}z &=[({x_{00}}^q+ \pi x_{01})^{q^m}-x_{00}^{q^{m+1}} ]+\pi[({x_{10}}^q+ \pi x_{01})^{q^{m-1}}-{x_{10}}^{q^m}] + \\
& \hspace{3cm} \cdots+ \pi^{m} [(x_{m0}^{q}+\pi x_{m0})-x_{m1}^q] \nonumber
\end{align}

Note that since $\pi$ divides $q$, the following congruences holds:
\begin{align}
({x_{i0}}^q+ \pi x_{i1})^{q^{m-i}}\equiv x_{i0}^{q^{m-i+1}}, \mb{ for all}
\mod \pi^{m-i+1},~ (0\leq i\leq m)
\end{align}

Hence the right hand side of \eqref{congruence} is divisible by $\pi^{m+1}$ 
and therefore there is a unique 
$z\in C$ satisfying the above identity. 
Hence define $$r_{m,1}(x):=(x_{00}, x_{10},\ldots, x_{m0},z)$$ and we are done.

Since $C$ is $\pi$-torsion free, the element $z$ is uniquely determined and 
therefore implies the map $r_{m,1}$ is uniquely determined.
\end{proof}

\textbf{Proof of Proposition \ref{hook-prop}}
($i$) Uniqueness follows from the ghost maps being injective on the $\pi$-torsion free ring $C$.
For existence, we use induction on $n$. The case $n=0$ is immediate. So assume $n\geq 1$.  Then consider the commutative diagram
$$
\xymatrix{
W_{m}W_{n}(C) \ar[d]_{w_{m,n}} \ar[r]^-{\Delta} & W_{m} W_{n-1}W_1(C) \ar[d] \ar[r]^{r_{m,n-1}} &  W_{m+n-1}(W_1(C))\ar[d] \ar[rr]^{r_{m+n-1,1}} && W_{m+n}(C) \ar[d]_{w_{m+n}} \\
\Pi_m\Pi_n(C)\ar[r]^-{\Delta}  & \Pi_{m} \Pi_{n-1} \Pi_1 (C) \ar[r]^{h_{m,n-1}}  & \Pi_{m+n-1}(\Pi_{1}(C))\ar[rr]^{h_{m+n-1,1}}  && \Pi_{m+n}(C)
}
$$
By induction hypothesis the map $r_{m, n-1}$ exists for the $\Ou$-algebra 
$W_1(C)$, instead of $C$.  The map $r_{m+n-1,1}$ exists by Lemma \ref{n=1}. Therefore the image of $W_m W_n (C)$ along the lower left route is contained in the image of the ghost map $w_{m+n}$; that is what we needed to prove. In fact, $r_{m,n}$ can be expressed in closed form as a compostion $r_{m+n-1,1} \circ r_{m,n-1}\circ \Delta$.

($ii$) Since the ring representing the functor $W_{m+n}$ is $\pi$-torsion free, the maps $r_{m,n}$ prolong to all $\Ou$-algebras follows from the $\pi$-torsion free statement. To prove that $r_{m,n}$ is a retraction of the comonad structure map, it is enough to check on ghost components. But this is true because the comonad structure map sends a ghost vector $(y_k)_{0\leq k\leq m+n}$ to the two dimensional ghost vector $((x_{ij})_{0\leq j\leq n})_{0\leq i\leq m}$ where $x_{ij}=y_{i+j}$.\qed

\begin{proposition}\label{hook-ver} The following diagram of short exact sequences is commutative
$$
\xymatrix{
0\ar[r]& W_{n-1}(C) \ar[dd]_{id} \ar[r]^{V^{m+1}} & W_{m+1} W_{n-1}(C) \ar[dd]^{r_{m+1,n-1}} \ar[r] &  W_{m}(W_{n-1}(C))\ar[d]^{r_{m,n-1}}  \ar[r]& 0\\
& & & W_{m+n-1}(C)\ar[d] &\\
0\ar[r] &W_{n-1}(C)\ar[r]^{V^{m+1}}  & W_{m+n}(C) \ar[r]  & W_{m}(C))\ar[r] & 0.
}
$$
\end{proposition}
\begin{proof} This can be checked using ghost components. The commutativity of the right square follows from the definition of the hook maps. We only need to 
check for the left square, which is the compatibility of the hook map $h_{m,n}$ with the ghost Verschibung $V_{w}$, which also holds as below 
$$h_{m+1,n-1}\langle V_{w}^{m+1}\langle{x_0,\ldots, x_{n-1}}\rangle\rangle=\langle 0,\dots,0, \pi^{n}x_{0},\ldots,\pi^{n}x_{n-1}\rangle=V_{w}^{m+1}\langle x_{0},\ldots,x_{n-1}\rangle.$$
\end{proof}
\subsection{The map $\alpha$} Let $(X,P)$ be a pointed affine scheme over $R$.
By slight abuse of notations, we again write $r_{m,n}$ for the induced map in jet spaces  $$r_{m,n}: J^{n}J^{m} X\longrightarrow J^{m+n}X.$$

\begin{corollary}\label{hook-jet} The diagram below commutes
$$
\xymatrix{
 J^{n-1} J^{m+1}X \ar[dd]^{r_{m+1,n-1}} \ar[r] &  J^{n-1}J^{m}X\ar[d]^{r_{m,n-1}}  \\
& J^{m+n-1}X\ar[d]^{u} \\
 J^{m+n}X \ar[r]^{u}  & J^{m}X.
}
$$
\end{corollary}
\begin{proof}
The result follows directly from Proposition \ref{hook-ver}.
\end{proof}

\begin{proposition} Let $(X,P)$ be a pointed affine scheme over $R$. Then there exists a canonical map $\alpha:J^{n-1}(N^{[m]1}X) \longrightarrow N^{\mn}X$ such that the following diagram commutes

$$
\xymatrix{
J^{n-1}(N^{[m]1}X) \ar@{-->}[rd]^{\alpha}\ar[rr] \ar[dd] && J^{n-1}J^{m+1}X \ar[rd]^{r_{m+1,n-1}} \ar[dd]^{J^{n-1}u}\\
& N^{\mn}X\ar@/^/[rr] \ar[dd]&& J^{m+n}X\ar[dd]^{u}\\
J^{n-1}(\Spec R) \ar@<1ex>[dr]^{\sim}_{u} \ar[rr]^{J^{n-1}P^m}  && J^{n-1}J^{m}X\ar[rd]^{u\circ r_{m,n-1}}\\
& \Spec R\ar[rr]^{P^{m}} && J^{m}X.
}
$$
\end{proposition}

\begin{proof} By Corollary \ref{hook-jet}, we have that the right square of the diagram commutes. Since $R$ is a $\delta$-ring, the marked point $P: \Spec R\longrightarrow X$ gives a marked point $P^m: \Spec R\longrightarrow J^m X$. We know by definition that $N^{\mn}X$ is the fibre of $u: J^{m+n}X\longrightarrow J^mX$ at the marked point $P^m$, which is the front square. In particular, invoking $n=1$ in the front square and applying the $J^{n-1}$ functor, we obtain the 
commuting square in the back.
The commutativity of the bottom square is clear. Then by the universal property of the fibre product for $N^{\mn}X$ in the front square, there exists a map $\alpha: J^{n-1}(N^{[m]1}X)\longrightarrow N^{\mn}X$ such that the entire diagram commutes. 
\end{proof}

\begin{theorem} 
\label{alpha0}
Let $(X,P)$ be a pointed affine scheme over $R$. Then the map $\alpha:J^{n-1}(N^{[m]1}X) \longrightarrow N^{\mn}X$ is an isomorphism.
\end{theorem}
\begin{proof} Note that it is enough to prove the statement for $(X,P)=(\mathbb{A}^{1},0)$. The general case follows from the case of $\mathbb{A}^1$ by a formal argument. Indeed, one can express $(X,P)\xrightarrow{\sim} \lim_{i} (\mathbb{A}^1, 0)$ as an equalizer in the category of pointed affine schemes over $R$.

Now, since the functors $J^{n-1}, N^{[m]1}$, and $N^{\mn}$ preserve limits, we have the canonical maps below are isomorphisms
$$J^{n-1}X\xrightarrow{\sim} \lim_{i} J^{n-1}\mathbb{A}^1, ~ N^{[m]1}X\xrightarrow{\sim} \lim_{i} N^{[m]1}\mathbb{A}^1, ~N^{\mn}X\xrightarrow{\sim} \lim_{i} N^{\mn}\mathbb{A}^1.$$ 
Since $X$ is affine implies that $N^{[m]1}X$ is affine.
Hence we obtain the isomorphism
$J^{n-1}(N^{[m]1}X)\xrightarrow{\sim} \lim_{i} J^{n-1}(N^{[m]1}\mathbb{A}^1)$.
Note that the maps $\alpha$ are functorial as $X$ varies, and since it is an isomorphism for $\mathbb{A}^1$, we obtain that $\alpha$ are isomorphisms for every $X$. 

In the case when $(X,P)=(\mathbb{A}^{1},0)$, by Lemma \ref{hook-ver} it follows that at the level of $C$-points $\alpha$ is the identity map of $W_{n-1}(C)$ for any $\pi$-torsion free $\Ou$-algebra $C$. This implies that $\alpha$ is identity and we are done.
\end{proof}

\begin{theorem} 
\label{alpha1}
Let $(X,P)= (\bb{A}^1,0)$ be the affine line over $S$ with 
the marked point denoted as the origin $0$. For all
$n$, the map
 $\alpha:J^{n-1}(N^{[m]1}X) \longrightarrow N^{\mn}X$ satisfies 
$$ 
\xymatrix{
 J^{n-1}N^{[m]1}X \ar[d]^{u} \ar[r]^{\alpha} & N^{\mn} X \ar[d]^{u}  \\
 J^{n-2}N^{[m]1}X \ar[r]^{\alpha}  & N^{[m]n-1}X
}~~
\xymatrix{
 J^{n-1}N^{[m]1}X \ar[d]^{\phi} \ar[r]^{\alpha} & N^{\mn} X \ar[d]^{\mff}  \\
 J^{n-2}N^{[m]1}X \ar[r]^{\alpha}  & N^{[m]n-1}X.
}
$$

\end{theorem}
\begin{proof} 

It is enough to check the above on the ghost side. 
Then for the left diagram above, we need to show that the following map of $n$ variables
\begin{align}\label{alpha}
\alpha_{w,n}:\left \langle\begin{matrix}
0 & 0 &\ldots &x_{0}\\
0 & 0 &\ldots &x_{1}\\
\vdots &\ddots&& \vdots\\
0 & 0 &\ldots &x_{n-1}
\end{matrix}\right \rangle \mapsto \langle 0,\ldots,0,x_{0},\ldots,x_{n-1}\rangle
\end{align}
is compatible as we truncate from $n$ to $n-1$, which is true.

Now observe that 
$$\phi_{w} \left \langle \begin{matrix}
0 & 0 &\ldots &x_{0}\\
0 & 0 &\ldots &x_{1}\\
\vdots &\ddots& &\vdots\\
0 & 0 &\ldots &x_{n-1}
\end{matrix}\right \rangle=
\left \langle\begin{matrix}
0 & 0 &\ldots &x_{1}\\
0 & 0 &\ldots &x_{2}\\
\vdots &\ddots& &\vdots\\
0 & 0 &\ldots &x_{n-1}
\end{matrix}\right \rangle,
$$
and by \eqref{fraghost} we have
$$\mff_{w}\langle 0,\ldots,0,x_{0},\ldots,x_{n-1}\rangle=\langle 0,\ldots,0,x_{1},\ldots,x_{n-1}\rangle.$$ 
Therefore we have $\alpha_{w,n-1}\circ\phi_w=\mff_{w}\circ \alpha_{w,n}$,
which is the required statement to prove that the diagram at the right
commutes and this completes the proof.
\end{proof}

By Theorem \ref{alpha1} the isomorphism $\alpha$ induces an isomorphism
of prolongation sequences of $R$-algebras
$h_* : J_*(N_{[m]1}A) \map N_{[m]*}A$ over $R_*$.
In particular for all $n \geq 1$, the isomorphism $h_*$ induces an isomorphism
$$
h_{n-1}: J_{n-1}(N_{[m]1}A) \simeq N_{[m]n}A
$$
of $R$-algebras. 

\begin{theorem}
\label{equality}
Let $\mg_*$ be the map of prolongation sequences of $R$-algebras as in 
Theorem \ref{Niso1}. Then $\mg_* = h_*$.
\end{theorem}
\begin{proof}
Note that for $n=1$, $h_0: N_{[m]1}A \map N_{[m]1}A$ is 
the identity map. Hence by the universal property satisfied by the canonical
prolongation sequence as in (\ref{upprol}), $h_*$ is the unique map of 
prolongation sequences induced from the $h_0=\mathbbm{1} \in 
\Hom_R(N_{[m]1}A, N_{[m]1}A)$.

Recall that for $\mg_*:J_*(N_{[m]1}A) \map N_{[m]*}A$, $\mg_1 = \mathbbm{1}$
and hence $\mg_*$ is also the unique map of prolongation sequence induced
from the identity map in 
\newline
$\Hom_R(N_{[m]1}A, N_{[m]1}A)$. 
Hence we have $\mg_* = h_*$ and we are done.
\end{proof}
}


{\bf Acknowledgements.} The authors would like to profusely thank the 
anonymous referee
for giving detailed and perceptive suggestions from which this paper has 
greatly benefitted. They would also like to thank Netan Dogra for helpful 
comments. The second author would like to thank James Borger for 
many insightful discussions. 
He would also like to thank Alessandra Bertapelle
and Nicola Mazzari for stimulating conversations. 
The second author was partially supported by the SERB grant SRG/2020/002248.

\footnotesize{

}

\end{document}